\def\@currentlabel{2.1}\label{e:dispaa}
\def\@currentlabel{2.21}\label{e:dispau}
\def\@currentlabel{2.22}\label{e:dispav}
\def\@currentlabel{2.23}\label{e:dispaw}
\def\@currentlabel{2.24}\label{e:dispax}
\def\theequation{\thesection.\@arabic\c@equation}
\let\oldbibliography\thebibliography
\renewcommand{\thebibliography}[1]{%
\oldbibliography{#1}%
\setlength{\itemsep}{0pt}%
}
\renewcommand{\theequation}{\thesection.\arabic{equation}}
\newtheorem{lemma}{Lemma}[section]
\newtheorem{proposition}{Proposition}[section]
\newtheorem{corollary}{Corollary}[section]
\newtheorem{remark}{Remark}[section]
\newtheorem{open problem}{Open Problem}[section]
\newtheorem{open question}{Open Quesion}[section]
\newcommand{\bremark}{\begin{remark} \em}
\newcommand{\eremark}{\end{remark} }
\newtheorem{numerical/experimental results}{Numerical/Experimental results}[section]
\newtheorem{theorem}{Theorem}[section]
\newcommand{\BE}{\begin{equation}}
\newcommand{\BEN}{\begin{equation*}}
\newcommand{\EE}{\end{equation}}
\newcommand{\EEN}{\end{equation*}}
\newcommand{\BL}{\begin{lemma}}
\newcommand{\EL}{\end{lemma}}
\newcommand{\BT}{\begin{theorem}}
\newcommand{\ET}{\end{theorem}}
\newcommand{\BP}{\begin{proposition}}
\newcommand{\EP}{\end{proposition}}
\newcommand{\BC}{\begin{corollary}}
\newcommand{\EC}{\end{corollary}}
\renewcommand{\Re}{\operatorname{Re}}
\renewcommand{\Im}{\operatorname{Im}}
\DeclareMathOperator*{\argmin}{arg\,min}
\begin{document}


\title{A new type of minimizers in lattice energy and its application}

\author{Kaixin Deng}

\author{Senping Luo}

\address[K.~Deng]{School of Mathematics and statistics, Jiangxi Normal University, Nanchang, 330022, China}
\address[S.~Luo]{School of Mathematics and statistics, Jiangxi Normal University, Nanchang, 330022, China}

\email[S.~Luo]{luosp1989@163.com}

\email[K.~Deng]{Dengkaikai1999@126.com}

\begin{abstract}
Let  $z\in \mathbb{H}:=\{z= x+ i y\in\mathbb{C}: y>0\}$  and
$$
\mathcal{K}(\alpha;z):=\sum_{ (m,n)\in \mathbb{Z} ^2 }\frac{{\left| mz+n \right|}^2}{{{\Im}(z)}}e^{-\pi\alpha\frac{ \left|mz+n\right|^2}{\Im(z)}}.
$$
 In this paper, we characterize the following minimization problem
\begin{equation}\aligned\nonumber
\min_{   \mathbb{H} } \big(\mathcal{K}(\alpha;z)-b\mathcal{K}(2\alpha;z)\big).
\endaligned\end{equation}

 We prove that there exist hexagonal to skinny-rhombic minimizers, which is a novel finding in the literature.

\end{abstract}

\maketitle


\section{Introduction and Statement of Main Results}
\setcounter{equation}{0}
{\it The fact that chemical elements combine to form crystals, periodic objects where the atoms are arranged in a periodic lattice of points with a limited set of symmetries, has been a basic belief for more than two centuries} (Bindi \cite{Bindi2020}). However, there is still an open largely crystal problem: what are the fundamental mechanisms behind the spontaneous arrangement of atoms into periodic configurations at low temperatures? (Radin \cite{Radin1987}). This famous problem, called as `Crystallization Conjecture', was proposed by Radin in 1987, a recent review of this conjecture can be found in Blanc-Lewin \cite{Blanc2015}.

 We are particularly interested in two-dimensional crystals. These systems capture many essential features of higher dimensions without the added complexity. Two-dimensional crystals play a crucial role in various physical systems, such as lipid monolayers on the surface of water \cite{Kaganer1999}, a monolayer of electrons on the surface of liquid helium, rare-gas clusters \cite{Schwerdtfeger2006}, colloidal systems in two dimensions \cite{Peeters1987}, and dusty plasmas \cite{Nosenko2004}. For a comprehensive overview of these applications, see B\'etermin \cite{Bet2015,Bet2016,Bet2018,BP2017,Betermin2021AHP}.

A fundamental mathematical and physical model for the Crystallization Conjecture and two-dimensional crystals is given by:
  \begin{equation}\aligned\label{EFL}
\min_L E_f(L ), \;\;\hbox{where}\;\;E_f(L):=\sum_{\mathbb{P}\in L \backslash\{0\}} f(|\mathbb{P}|^2),\; |\cdot|\;\hbox{is the Euclidean norm on}\;\mathbb{R}^2.
\endaligned\end{equation}

 The $E_f(L )$ denotes the lattice energy per particle of the crystals, the summation ranges over all the lattice points except the origin $0$.
 $f$ is the background potential of the system and $L$ denotes the lattice. For the Riesz potential $f(r^2)=|r|^{-2s}$ with $s>1$, the lattice energy $E_f(L)$ becomes the classical Epstein zeta function,
 Rankin \cite{Rakin1953}, Cassels \cite{Cassels1963}, Ennola \cite{Ennola1964} and Diananda \cite{Dia1964} proved that the hexagonal lattice is the unique minimizer of the Epstein zeta function up to the action by modular group. For the Gaussian potential $f(r^2)=e^{-\pi \alpha r^2}$ with $ \alpha>0$, the lattice energy becomes the classical theta function, and
 Montogmery \cite{Mon1988} proved that the hexagonal lattice still minimize the lattice energy. In fact, Montogmery's Theorem \cite{Mon1988} implies the results of Rankin \cite{Rakin1953}, Cassels \cite{Cassels1963}, Ennola \cite{Ennola1964} and Diananda \cite{Dia1964}. The motivation of these research come from pure interest of number theory, and it turns out these theorems have deep applications in lattice energy.

 In this paper, we are interested in the lattice energy of the following difference form:
   \begin{equation}\aligned\label{EFL1}
\min_L E_{f}(L ), \;\;\hbox{where}\;\;E_f(L):=E_{f_1}(L)-E_{f_2}(L),\; f=f_1-f_2,
\endaligned\end{equation}
 where the potentials $f_1, f_2$ are two potentials with simple forms. Mathematically, the problem \eqref{EFL1} is equivalent to the problem \eqref{EFL}, but physically, it introduces new insights. For more details on this model and its applications in physics, we refer to B\'etermin \cite{Bet2018,Betermin2021JPA,Betermin2021AHP,Betermin2021Arma}, B\'etermin-Petrache\cite{Bet2019AMP}, B\'etermin-Faulhuber-Kn$\ddot{u}$pfer \cite{Bet2020}, and B\'etermin-Friedrich-Stefanelli \cite{Betermin2021LMP}. The problem \eqref{EFL1} was initiated by the study of difference of Yukawa potential (B\'etermin \cite{Bet2016}),
 Lennard-Jones potential (B\'etermin \cite{Bet2018}), and Morse potential (B\'etermin \cite{Bet2019}).

In this paper, we consider the potential of the form:

\begin{equation}\aligned\label{Potential}
f=f_1-f_2,\;\;\hbox{where}\;f_1(r^2)=r^2e^{-\pi \alpha r^2},\;\;f_2(r^2)=br^2e^{-\pi 2\alpha r^2},\;\hbox{and}\;\alpha\geq2,\;b>0.
\endaligned\end{equation}

Let $ z\in \mathbb{H}:=\{z\in\mathbb{C}: \Im(z)>0\}$ and $\Lambda =\frac{1}{\sqrt{\Im(z)}}\big({\mathbb Z}\oplus z{\mathbb Z}\big)$ be the lattices with unit density in $ \mathbb{R}^2$. Define
\begin{equation}\aligned\label{define}
\mathcal{K}(\alpha;z):=\underset{\mathbb{P}\in \Lambda}{\sum}{\left|  \mathbb{P} \right|}^2{e}^{-\pi\alpha{\left|  \mathbb{P} \right|}^2}=\sum_{ (m,n)\in \mathbb{Z} ^2 }\frac{{\left| mz+n \right|}^2}{{{\Im}(z)}}e^{-\pi\alpha\frac{ \left|mz+n\right|^2}{\Im(z)}}.
\endaligned\end{equation}

Then the lattice energy \eqref{EFL1} under potential \eqref{Potential} becomes
\begin{equation}\aligned\nonumber
\mathcal{K}(\alpha;z)-b \mathcal{K}(2\alpha;z),\;\hbox{where}\;\alpha\geq2,\;b>0.
\endaligned\end{equation}
It is interesting to study the phase transitions of the lattice energy under the potential \eqref{Potential}.
To this end, we prove the following theorem:

\begin{theorem}\label{Th1} Assume that $ \alpha \geq 2$. Consider the lattice energy:
\begin{equation}\aligned\nonumber
\underset{z\in \mathbb{H}}{\min} \big(\mathcal{K}(\alpha;z)-b \mathcal{K}(2\alpha;z)\big).
\endaligned\end{equation}
Then, up to the action by modular group, there exists two thresholds $b_{c_1}<b_{c_2}$, where $b_{c_1}$ depends on
 $\alpha$
with bound $b_{c_1}\in(2,2\sqrt2)$ and $b_{c_2}=2\sqrt2$ is independent of $\alpha$. Specifically, the following holds:
\begin{itemize}
\item [(1)] For $b\leq b_{c_1}$, the minimizer is $e^{i\frac{\pi}{3}}$, corresponding to a hexagonal lattice.
\item [(2)] For $b_{c_1}<b<b_{c_2}$, the minimizer is $\frac{1}{2}+i y_{b}$ with $y_b>\frac{\sqrt3}{2}$, corresponding to a skinny-rhombic lattice. Furthermore, as $b$ approaches $2\sqrt{2}$, $y_{b}$ approaches $+\infty$.
\item [(3)] For $b\geq b_{c_2}$, the minimizer does not exist.
\end{itemize}
  \end{theorem}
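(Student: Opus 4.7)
\medskip\noindent\textbf{Proof plan.}
The strategy is to reduce the two-variable minimization on $\mathbb{H}$ to a one-dimensional analysis along the vertical ray $\Gamma := \{\tfrac12+iy : y \ge \tfrac{\sqrt 3}{2}\}$, which is the right boundary of the standard fundamental domain $\mathcal{D}:=\{z:|z|\ge 1,\ |\Re z|\le\tfrac12\}$ of the modular group. The starting point is the identity $\mathcal{K}(\alpha;z) = -\tfrac{1}{\pi}\partial_\alpha\theta(\alpha;z)$, where
\[
\theta(\alpha;z) := \sum_{(m,n)\in\mathbb{Z}^2}e^{-\pi\alpha|mz+n|^2/\Im z}.
\]
By Montgomery's theorem, $e^{i\pi/3}$ minimizes $\theta(\alpha;\cdot)$ on $\mathcal{D}$ for every $\alpha>0$, so $\nabla_z\theta(\alpha;e^{i\pi/3})=0$; differentiating in $\alpha$ yields $\nabla_z\mathcal{K}(\alpha;e^{i\pi/3})=0$, and hence $e^{i\pi/3}$ is a critical point of $F(z):=\mathcal{K}(\alpha;z)-b\mathcal{K}(2\alpha;z)$ for every $\alpha,b$. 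Modular invariance reduces the problem to $\mathcal{D}$, the symmetry $F(-\bar z)=F(z)$ further restricts to $\Re z\in[0,\tfrac12]$, and a Montgomery-type termwise argument on $\partial_x F$ (using the Laplace representation of the Gaussian weights) then shows that any minimum is attained on the right boundary $\Gamma$. This reduces the problem to the one-variable study of $g(y):=F(\tfrac12+iy)$ on $[\tfrac{\sqrt 3}{2},+\infty)$.

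The asymptotics of $g$ as $y\to+\infty$ isolate the upper threshold. Separating the $m=0$ slice from the exponentially small $m\ne 0$ tail and applying Poisson summation in $n$ gives
\[
\mathcal{K}(\alpha;\tfrac12+iy) = \frac{\sqrt y}{2\pi\alpha^{3/2}} + O\!\left(e^{-c(\alpha) y}\right),
\]
so $g(y) \sim \bigl(1-\tfrac{b}{2\sqrt 2}\bigr)\tfrac{\sqrt y}{2\pi\alpha^{3/2}}$. Consequently $g\to+\infty$ when $b<2\sqrt 2$, guaranteeing existence of a minimizer on $\Gamma$ by continuity, while $g\to-\infty$ when $b>2\sqrt 2$, so no minimizer exists; this identifies $b_{c_2}=2\sqrt 2$, independently of $\alpha$. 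Moreover, since the leading coefficient of $\sqrt y$ vanishes continuously at $b=b_{c_2}$, any interior minimizer $y_b$ must diverge: $y_b\to+\infty$ as $b\to(2\sqrt 2)^-$.

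Because $e^{i\pi/3}$ is always a critical point of $F$, we have $g'(\tfrac{\sqrt 3}{2})=0$ for every $b$, and local stability at the hexagonal point is governed by
\[
g''(\tfrac{\sqrt 3}{2})=\partial_y^2\mathcal{K}(\alpha;e^{i\pi/3})-b\,\partial_y^2\mathcal{K}(2\alpha;e^{i\pi/3}).
\]
Defining
\[
b_{c_1}(\alpha) := \frac{\partial_y^2\mathcal{K}(\alpha;e^{i\pi/3})}{\partial_y^2\mathcal{K}(2\alpha;e^{i\pi/3})},
\]
the hexagonal point is a strict local minimum of $g$ iff $b<b_{c_1}(\alpha)$; the bounds $2<b_{c_1}(\alpha)<2\sqrt 2$ for $\alpha\ge 2$ then follow from an explicit two-sided comparison of the two Gaussian-weighted lattice sums at $e^{i\pi/3}$, exploiting the $\alpha\mapsto 2\alpha$ rescaling and the exponential gap in decay rates. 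To conclude, we upgrade local stability to global optimality and uniqueness: for $b\le b_{c_1}$ we show $g$ is nondecreasing on $[\tfrac{\sqrt 3}{2},\infty)$, whereas for $b_{c_1}<b<b_{c_2}$ we show $g$ has exactly one interior critical point $y_b>\tfrac{\sqrt 3}{2}$, which is then the global minimum. The main obstacle is precisely this global uniqueness statement for $g'$: ruling out spurious zeros arising from the sign competition between $\partial_y\mathcal{K}(\alpha;\tfrac12+iy)$ and $\partial_y\mathcal{K}(2\alpha;\tfrac12+iy)$ requires sharp termwise estimates across the full range $y>\tfrac{\sqrt 3}{2}$, and is where the assumption $\alpha\ge 2$ enters essentially.
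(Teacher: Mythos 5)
Your skeleton agrees with the paper in its first two stages: reduction to the vertical line $\Re z=\tfrac12$ by proving $\partial_x\big(\mathcal{K}(\alpha;z)-b\mathcal{K}(2\alpha;z)\big)<0$ on the fundamental domain (the paper's Section 3, valid for $b\le 2\sqrt2$), and the large-$y$ asymptotics $\mathcal{K}(\alpha;\tfrac12+iy)\sim \tfrac{\sqrt y}{2\pi\alpha^{3/2}}$, which correctly identifies $b_{c_2}=2\sqrt2$. However, the core of parts (1) and (2) in your plan rests on three claims that are asserted rather than proved, and these are precisely where the work lies. First, you define $b_{c_1}$ as the local-stability ratio $\partial_y^2\mathcal{K}(\alpha;e^{i\pi/3})/\partial_y^2\mathcal{K}(2\alpha;e^{i\pi/3})$ and then claim that for all $b\le b_{c_1}$ the one-variable function $g$ is nondecreasing on $[\tfrac{\sqrt3}{2},\infty)$, and that for $b_{c_1}<b<2\sqrt2$ it has a unique interior critical point. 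Neither statement is justified; without them the local threshold need not coincide with the global one (a first-order transition is not excluded), and you yourself flag this as ``the main obstacle.'' The paper avoids this entirely: it defines $b_{c_1}$ directly as the supremum of $b$ for which the argmin on $\Gamma_c$ is $e^{i\pi/3}$, and obtains the lower bound $b_{c_1}>2$ by the decomposition $\mathcal{K}(\alpha;z)-b\mathcal{K}(2\alpha;z)=\big(\mathcal{K}(\alpha;z)-2\mathcal{K}(2\alpha;z)\big)+(2-b)\mathcal{K}(2\alpha;z)$, combining the known minimality of $\mathcal{K}(2\alpha;\cdot)$ at $e^{i\pi/3}$ with a new theorem that $\mathcal{K}(\alpha;\cdot)-2\mathcal{K}(2\alpha;\cdot)$ is also minimized there (proved via a second-order operator $\partial_y^2+\tfrac2y\partial_y$ on $[\tfrac{\sqrt3}{2},2]$ plus a direct comparison for $y\ge2$). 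Your ``explicit two-sided comparison'' for $2<b_{c_1}<2\sqrt2$ is not carried out and, as stated, would only bound the local threshold, not the quantity the theorem is about.

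Second, your nonexistence argument covers only $b>2\sqrt2$; at $b=b_{c_2}=2\sqrt2$ the leading $\sqrt y$ coefficient vanishes and one must show separately that $\mathcal{K}(\alpha;\tfrac12+iy)-2\sqrt2\,\mathcal{K}(2\alpha;\tfrac12+iy)>0$ on all of $\Gamma_c$ while tending to $0$ as $y\to\infty$, so the infimum is not attained (the paper's Lemma 5.1). Relatedly, the conclusion $y_b\to\infty$ as $b\to(2\sqrt2)^-$ does not follow merely from the continuous vanishing of the leading coefficient; one needs either this strict positivity at $b=2\sqrt2$ on compact sets (to rule out bounded subsequences of $y_b$) or, as the paper does, an external monotonicity result. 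These gaps are fixable but substantive: filling them amounts to reproducing most of the paper's Sections 4 and 5.
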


B\'etermin discovered the hexagonal-rhombic-square-rectangular phase transitions of the system under Lennard-Jones potential and Morse potential
in \cite{Bet2018} and \cite{Bet2019}, respectively. A rigorous proof of these phase transitions can be found in tri-copolymer systems (Luo-Ren-Wei \cite{Luo2019}) and Bose-Einstein condensates (Luo-Wei \cite{Luo2022}), respectively.
In Theorem \ref{Th1}, we uncover a new type of phase transition from hexagonal to skinny-rhombic.
\begin{figure}
\centering
 \includegraphics[scale=0.45]{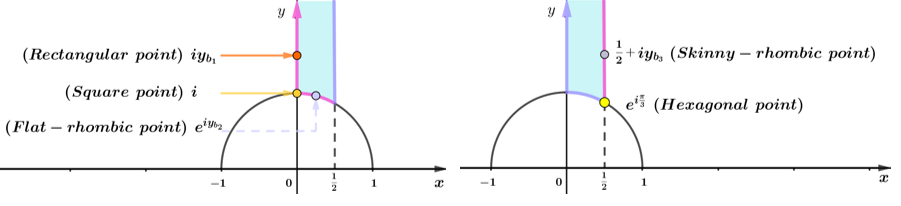}
 \caption{The existing minimizers and a new type of minimizers.}
 \label{d}
\end{figure}

\vskip0.1in

This paper is organized as follows. In Section 2, we introduce the symmetries of the function $\mathcal{K}(\alpha;z)$, such that this problem can be reduced to the fundamental domain. In Section 3, we prove that the minimization problem on the fundamental domain can be reduced to its right boundary. In Section 4, we explain partially the case where the minimizer is the hexagonal point and the methods can be applied to the relevant problems. Finally, in Section 5, we prove Theorem \ref{Th1}.

\section{Preliminaries }
In this section, we collect some basic properties of the theta function along with some useful estimates of Jacobi theta function. We also introduce a group related to the functional $\theta(\alpha;z)$. The generators of the group are given by
\begin{equation}\aligned\label{GroupG1}
\mathcal{G}: \hbox{the group generated by} \;\;\tau\mapsto -\frac{1}{\tau},\;\; \tau\mapsto \tau+1,\;\;\tau\mapsto -\overline{\tau}.
\endaligned\end{equation}

By Definition (\ref{GroupG1}), the fundamental domain associated to modular group $\mathcal{G}$ is
\begin{equation}\aligned\label{Fd1}
\mathcal{D}_{\mathcal{G}}:=\{
z\in\mathbb{H}: |z|>1,\; 0<x<\frac{1}{2}
\}.
\endaligned\end{equation}

The following lemma  characterizes the invariance properties of the theta function under the action of the group $\mathcal{G}$:
\begin{lemma}[B\'etermin \cite{Bet2018}, Luo-Wei \cite{Luo2022}]\label{G111} For any $\alpha>0$, any $\gamma\in \mathcal{G}$ and $z\in\mathbb{H}$,
$\theta (\alpha; \gamma(z))=\theta (\alpha; z)$.
\end{lemma}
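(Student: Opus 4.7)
The plan is to exploit the fact that $\mathcal{G}$ is generated by the three maps $\tau\mapsto\tau+1$, $\tau\mapsto-1/\tau$, and $\tau\mapsto-\overline{\tau}$, so it suffices to verify the invariance $\theta(\alpha;\gamma(z))=\theta(\alpha;z)$ separately on each generator; the general case then follows by induction on the length of a word in the generators (each generator here is an involution, so no separate treatment of inverses is needed). In every case the task reduces to showing that the quantity $|mz+n|^2/\Im(z)$, which is the sole input to the summand, is preserved under an appropriate bijective reindexing of $\mathbb{Z}^2$; the exponential factor, and any polynomial prefactor as in (\ref{define}), then automatically transforms correctly and the double sum is unchanged.

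For the translation $\tau\mapsto\tau+1$ I would use $\Im(z+1)=\Im(z)$ together with $|m(z+1)+n|^2=|mz+(m+n)|^2$, so the fibrewise bijection $n\mapsto n-m$ on $\mathbb{Z}$ (with $m$ fixed) identifies the new summands with the old ones. For the reflection $\tau\mapsto-\overline{\tau}$ again $\Im$ is preserved, and
\begin{equation*}
|{-m\overline{z}+n}|^2=|\overline{-m\overline{z}+n}|^2=|{-mz+n}|^2=|mz-n|^2,
\end{equation*}
so the bijection $n\mapsto -n$ (with $m$ fixed) suffices. The only case requiring a small computation is the inversion $\tau\mapsto-1/\tau$: since $-1/z=-\overline{z}/|z|^2$, one has $\Im(-1/z)=\Im(z)/|z|^2$, and
\begin{equation*}
\frac{|m(-1/z)+n|^2}{\Im(-1/z)} = \frac{|nz-m|^2/|z|^2}{\Im(z)/|z|^2} = \frac{|nz-m|^2}{\Im(z)},
\end{equation*}
after which the linear bijection $(m,n)\mapsto(n,-m)$ of $\mathbb{Z}^2$ matches this with the summand of $\theta(\alpha;z)$ at $(m,n)$.

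I do not anticipate a genuine obstacle. The only conceptual point worth flagging is that in the inversion case $\Im$ is itself rescaled by $|z|^{-2}$, so the factor of $|z|^2$ coming from the numerator $|m(-1/z)+n|^2$ is cancelled by the factor of $|z|^2$ coming from the denominator $\Im(-1/z)$; this cancellation is precisely what the density-one normalization built into the definition (\ref{define}) is designed to produce. Once this is seen, the three generator verifications collapse to routine bookkeeping, and invariance under arbitrary $\gamma\in\mathcal{G}$ follows.
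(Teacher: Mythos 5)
Your proof is correct and is the standard generator-by-generator verification; the paper itself gives no proof of this lemma, citing it from B\'etermin and Luo--Wei, where essentially this argument (reindexing $\mathbb{Z}^2$ so that $|mz+n|^2/\Im(z)$ is preserved under each generator) appears. One small slip: the translation $\tau\mapsto\tau+1$ is \emph{not} an involution (its inverse is $\tau\mapsto\tau-1$), but this is harmless because the set of $\gamma$ for which $\theta(\alpha;\gamma(z))=\theta(\alpha;z)$ holds for all $z$ is automatically a subgroup of the transformation group of $\mathbb{H}$, so invariance under the three generators already yields invariance under all of $\mathcal{G}$.
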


$\mathcal{K}(\alpha;z)$ has a close connection to the theta function.

\begin{lemma}[The relationship between the function $\mathcal{K}(\alpha;z)$ and the theta function]\label{2lem2}
\begin{equation}\aligned\nonumber
\mathcal{K}(\alpha;z)=-\frac{1}{\pi} \frac{\partial}{\partial{\alpha}}\theta (\alpha;z).
\endaligned\end{equation}
\end{lemma}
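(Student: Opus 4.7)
The plan is to prove Lemma 2.2 by straightforward termwise differentiation of the defining series for $\theta(\alpha;z)$, with a brief justification that differentiation under the summation sign is legitimate.

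First I would recall the definition of the theta function used in the paper, namely
\begin{equation}\nonumber
\theta(\alpha;z) = \sum_{(m,n)\in\mathbb{Z}^2} e^{-\pi\alpha\frac{|mz+n|^2}{\Im(z)}},
\end{equation}
and observe that each summand is a smooth function of $\alpha$ whose $\alpha$-derivative equals
\begin{equation}\nonumber
\frac{\partial}{\partial\alpha}\, e^{-\pi\alpha\frac{|mz+n|^2}{\Im(z)}} = -\pi\,\frac{|mz+n|^2}{\Im(z)}\, e^{-\pi\alpha\frac{|mz+n|^2}{\Im(z)}}.
\end{equation}
Summing formally over $(m,n)\in\mathbb{Z}^2$ reproduces exactly $-\pi\,\mathcal{K}(\alpha;z)$ by the definition \eqref{define}, so that $\mathcal{K}(\alpha;z)=-\frac{1}{\pi}\frac{\partial}{\partial\alpha}\theta(\alpha;z)$, which is the claim.

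The only nontrivial step is to justify interchanging the derivative and the infinite sum. For any fixed $z\in\mathbb{H}$ and any compact interval $[\alpha_0,\alpha_1]\subset(0,\infty)$, I would bound the derivative of each term uniformly in $\alpha\in[\alpha_0,\alpha_1]$ by
\begin{equation}\nonumber
\pi\,\frac{|mz+n|^2}{\Im(z)}\, e^{-\pi\alpha_0\frac{|mz+n|^2}{\Im(z)}},
\end{equation}
and observe that this majorant is summable over $(m,n)\in\mathbb{Z}^2\setminus\{0\}$ because $|mz+n|^2/\Im(z)$ grows quadratically in $(m,n)$ while the Gaussian factor decays super-polynomially. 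Hence the differentiated series converges locally uniformly in $\alpha$, and standard real-analysis theorems (dominated convergence, or the term-by-term differentiation theorem for uniformly convergent series of $C^1$ functions) allow pulling $\partial_\alpha$ inside the sum.

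I expect no real obstacle here: the identity is essentially a bookkeeping observation, and the only care needed is the uniform majorant above. The result will then be invoked in later sections to translate the minimization of the difference $\mathcal{K}(\alpha;z)-b\,\mathcal{K}(2\alpha;z)$ into a statement about derivatives of $\theta$, which is where the analytical machinery of the paper can be brought to bear.
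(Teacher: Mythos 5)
Your proof is correct and is exactly the routine computation the paper has in mind: the paper states Lemma \ref{2lem2} without proof, treating it as an immediate consequence of differentiating the defining series of $\theta(\alpha;z)$ term by term, which is what you do, together with the standard uniform-majorant justification for the interchange. No issues.
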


\begin{lemma}[The invariance of $(\mathcal{K}(\alpha;z)-b \mathcal{K}(2\alpha;z))$]\label{2lem3} For any $\alpha>0,\:b\in\mathbb{R}$, any $\gamma\in \mathcal{G}$ and $z\in\mathbb{H}$, it holds that
$$ \mathcal{K}(\alpha;z)-b \mathcal{K}(2\alpha;z)=\mathcal{K}\big(\alpha;\gamma(z)\big)-b \mathcal{K}\big(2\alpha;\gamma(z)\big).$$
\end{lemma}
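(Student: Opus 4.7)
The statement is a direct corollary of Lemmas \ref{G111} and \ref{2lem2}, so the plan is essentially to combine them by differentiating in the parameter $\alpha$ and invoking linearity.

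First, I would fix $\gamma\in\mathcal{G}$ and $z\in\mathbb{H}$, and start from the identity $\theta(\beta;\gamma(z))=\theta(\beta;z)$ provided by Lemma \ref{G111}, which holds for every $\beta>0$. Because the theta series $\theta(\beta;z)=\sum_{(m,n)\in\mathbb{Z}^2}e^{-\pi\beta|mz+n|^2/\Im(z)}$ converges absolutely and uniformly on any compact subset of $\{(\beta,z): \beta>0,\ z\in\mathbb{H}\}$, and term-by-term $\beta$-differentiation again produces a series that is uniformly convergent on such compact sets, I may differentiate both sides of the invariance identity with respect to $\beta$. This yields $\partial_\beta\theta(\beta;\gamma(z))=\partial_\beta\theta(\beta;z)$ for every $\beta>0$.

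Next, applying Lemma \ref{2lem2} gives $\mathcal{K}(\beta;z)=-\tfrac{1}{\pi}\partial_\beta\theta(\beta;z)$, so the preceding identity translates into $\mathcal{K}(\beta;\gamma(z))=\mathcal{K}(\beta;z)$ for every $\beta>0$. Specializing once to $\beta=\alpha$ and once to $\beta=2\alpha$, I obtain
\begin{equation*}
\mathcal{K}(\alpha;\gamma(z))=\mathcal{K}(\alpha;z),\qquad \mathcal{K}(2\alpha;\gamma(z))=\mathcal{K}(2\alpha;z).
\end{equation*}

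Finally, subtracting $b$ times the second identity from the first yields the claim $\mathcal{K}(\alpha;z)-b\mathcal{K}(2\alpha;z)=\mathcal{K}(\alpha;\gamma(z))-b\mathcal{K}(2\alpha;\gamma(z))$ for every $b\in\mathbb{R}$. There is no serious obstacle here; the only mild technical point is the justification of differentiating the theta series term by term in $\beta$, which is immediate from the exponential decay of the summands uniformly on compact subsets of $\beta>0$. Consequently, the problem $\min_{z\in\mathbb{H}}\big(\mathcal{K}(\alpha;z)-b\mathcal{K}(2\alpha;z)\big)$ reduces to the fundamental domain $\mathcal{D}_{\mathcal{G}}$ of \eqref{Fd1}, which is exactly how this lemma will be used in the subsequent sections.
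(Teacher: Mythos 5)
Your proof is correct and follows exactly the route the paper intends: the paper states this lemma without proof precisely because it is the immediate consequence of Lemma \ref{G111} and Lemma \ref{2lem2} that you spell out. The only remark is that once $\theta(\beta;\gamma(z))=\theta(\beta;z)$ holds for all $\beta>0$ as an identity of functions of $\beta$, equality of the $\beta$-derivatives is automatic, so the term-by-term differentiation discussion is needed only to know that $\partial_\beta\theta$ exists and equals $-\pi\mathcal{K}$, which Lemma \ref{2lem2} already supplies.
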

The Jacobi theta function is defined as follows:
\begin{equation}\aligned\label{Jacobi}\nonumber
\vartheta_J(z;\tau):=\sum_{n=-\infty}^\infty e^{i\pi n^2 \tau+2\pi i n z}.
 \endaligned\end{equation}
The classical one-dimensional theta function is given by
\begin{equation}\aligned\label{TXY}
\vartheta(X;Y):=\vartheta_J(Y;iX)=\sum_{n=-\infty}^\infty e^{-\pi n^2 X} e^{2n\pi i Y}.
 \endaligned\end{equation}
By the Poisson summation formula, it holds that
\begin{equation}\aligned\label{Poisson}
\vartheta(X;Y)=X^{-\frac{1}{2}}\sum_{n=-\infty}^\infty e^{-\pi \frac{(n-Y)^2}{X}} .
 \endaligned\end{equation}
To estimate bounds of quotients of derivatives of $\vartheta(X;Y)$, we denote that
\begin{equation}\aligned\nonumber
\underline\vartheta_{1}(X)&:=4\pi e^{-\pi X}\big(1-\mu(X)\big), \:\:\:\:\:\:\:\:\:\:\overline{\vartheta}_{1}(X):=4\pi e^{-\pi X}\big(1+\mu(X)\big),\:\:\:\:\:\:\\
\underline\vartheta_{2}(X)&:=\pi e^{-\frac{\pi}{4X}}X^{-\frac{3}{2}},\:\:\:\:\:\:\:\:\:\:\:\:\:\:\:\:\:\:\:\:\:\:\:\:\:\overline{\vartheta}_{2}(X):=X^{-\frac{3}{2}},\:\:\:\:\:\:\:\:\:\:\:\:\:\\
\endaligned\end{equation}
and
\begin{equation}\aligned\label{duct}
\mu(X)&:=\sum_{n=2}^{\infty}n^2e^{-\pi (n^2-1) X},\:\:\:\:\:\:\:\:\:\:\:\:\hat{\mu}(X):=\sum_{n=2}^{\infty}n^2e^{-\pi (n^2-1) X}(-1)^{n+1},\\
\nu(X)&:=\sum_{n=2}^{\infty}n^4e^{-\pi (n^2-1) X},\:\:\:\:\:\:\:\:\:\:\:\:\hat{\nu}(X):=\sum_{n=2}^{\infty}n^4e^{-\pi (n^2-1) X}(-1)^{n+1}.\\
    \endaligned\end{equation}

There are some useful estimates for quotients of 1-d theta functions and their derivatives.
\begin{lemma}[Luo-Wei \cite{Luo2022}]\label{2lem4} Assume that $\sin(2\pi Y)>0$. It holds that
\begin{itemize}
  \item [(1)] if $ X>\frac{1}{5}$, then $ -\overline{\vartheta}_{1}(X)\sin(2\pi Y)\leq\frac{\partial}{\partial Y}\vartheta(X;Y)\leq-\underline\vartheta_{1}(X)\sin(2\pi Y);$
  \item [(2)] if $X< \frac{\pi}{\pi+2} $, then $-\overline\vartheta_{2}(X)\sin(2\pi Y)\leq\frac{\partial}{\partial Y}\vartheta(X;Y)\leq-\underline\vartheta_{2}(X)\sin(2\pi Y).$
\end{itemize}
 \end{lemma}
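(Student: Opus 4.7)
The plan is to handle the two estimates using the two complementary series expansions of $\vartheta$: for part (1) I use the Fourier series (\ref{TXY}), which converges rapidly when $X$ is large; for part (2) I use the Poisson-transformed representation (\ref{Poisson}), which converges rapidly when $X$ is small. In each case I differentiate termwise in $Y$, split off the dominant piece, and show that the remainder is dominated by a multiple of $\sin(2\pi Y)$. By periodicity of $\partial_Y\vartheta$ in $Y$ I reduce to $Y\in(0,1/2)$, so that $\sin(2\pi Y)>0$.

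For (1), pairing $n$ with $-n$ in (\ref{TXY}) and differentiating yields
$$\frac{\partial}{\partial Y}\vartheta(X;Y) = -4\pi e^{-\pi X}\sin(2\pi Y)-4\pi\sum_{n\geq 2}n\,e^{-\pi n^2X}\sin(2\pi nY).$$
The elementary Chebyshev-type inequality $|\sin(2\pi nY)|\leq n|\sin(2\pi Y)|$, proved by induction from the sine addition formula, bounds the tail by $4\pi e^{-\pi X}\mu(X)\sin(2\pi Y)$ and delivers the two-sided estimate. The hypothesis $X>1/5$ serves only to force $\mu(X)<1$, which reduces to a single numerical check since $\mu$ is monotonically decreasing in $X$, and this keeps the upper bound strictly negative. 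For (2), differentiation of (\ref{Poisson}) gives
$$\frac{\partial}{\partial Y}\vartheta(X;Y) = \frac{2\pi}{X^{3/2}}\sum_{n\in\mathbb{Z}}(n-Y)\,e^{-\pi(n-Y)^2/X}.$$
The cleanest route is to pass to the Jacobi triple product for $\vartheta$ and log-differentiate in $Y$, which yields the manifestly signed identity
$$\frac{\partial_Y\vartheta(X;Y)}{\vartheta(X;Y)} = -4\pi\sin(2\pi Y)\sum_{n\geq 1}\frac{q^{2n-1}}{1+2q^{2n-1}\cos(2\pi Y)+q^{4n-2}},\qquad q=e^{-\pi X},$$
from which the two-sided sandwich on $\partial_Y\vartheta$ is read off by bounding the rational sum by its $n=1$ contribution and using the same product representation to bound $\vartheta(X;Y)$ above and below. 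An equivalent path is to pair $n$ with $1-n$ in the Poisson series so that each pair vanishes at $Y=1/2$ and then factor $\sin(2\pi Y)$ out by a mean-value argument applied to $t\mapsto t\,e^{-\pi t^2/X}$. The hypothesis $X<\pi/(\pi+2)$ is precisely the threshold below which the $n=1$ approximation dominates the exponentially smaller $n\geq 2$ corrections, so the explicit constants $\pi X^{-3/2}e^{-\pi/(4X)}$ and $X^{-3/2}$ materialize.

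The main technical obstacle is part (2): pinning down the sharp constants requires exposing the cancellations among the dominant terms rather than merely estimating them absolutely, and verifying that $X<\pi/(\pi+2)$ is exactly the range in which those cancellations dominate the $|n|\geq 2$ tail. Part (1), by contrast, reduces to the Chebyshev-type inequality plus one numerical bound on $\mu(1/5)$ and is structurally routine.
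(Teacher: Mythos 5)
The paper offers no proof of this lemma --- it is imported verbatim from Luo--Wei \cite{Luo2022} --- so there is nothing internal to compare against and your attempt has to be judged on its own. Item (1) is fine and essentially complete: differentiating \eqref{TXY} termwise, isolating the $n=1$ term $-4\pi e^{-\pi X}\sin(2\pi Y)$, and bounding the tail by $|\sin(2\pi nY)|\le n|\sin(2\pi Y)|$ gives an error of exactly $4\pi e^{-\pi X}\mu(X)\sin(2\pi Y)$, which is the definition of $\underline\vartheta_1,\overline\vartheta_1$; and you are right that $X>\tfrac15$ only serves to make $\mu(X)<1$.

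Item (2) has genuine gaps. First, the triple-product route cannot work as you describe it: in the regime $X<\frac{\pi}{\pi+2}$ one has $q=e^{-\pi X}$ close to $1$, so the sum $\sum_{n\ge1}q^{2n-1}/(1+2q^{2n-1}\cos(2\pi Y)+q^{4n-2})$ is \emph{not} dominated by its $n=1$ term --- it contains on the order of $1/X$ comparable terms (the $n=1$ term alone contributes the bounded quantity $\tfrac12\tan(\pi Y)$ to $-\partial_Y\vartheta/(4\pi\vartheta\sin 2\pi Y)$ as $X\to0$, whereas the true ratio grows like $Y/X$), and the constants $X^{-3/2}$ and $\pi X^{-3/2}e^{-\pi/(4X)}$ cannot be read off from it. Second, your alternative route (pairing $n$ with $1-n$ in \eqref{Poisson}) is the right one, but the decisive step is missing: after pairing, the principal contribution is $2\pi X^{-3/2}\bigl(g(Y)-g(1-Y)\bigr)$ with $g(t)=te^{-\pi t^2/X}$, and one must actually prove $\tfrac12 e^{-\pi/(4X)}\sin(2\pi Y)\le g(Y)-g(1-Y)\le\tfrac{1}{2\pi}\sin(2\pi Y)$ on $(0,\tfrac12)$ (with room to absorb the $|n|\ge2$ pairs); a bare mean-value argument does not settle this because $g$ changes monotonicity at $t=\sqrt{X/(2\pi)}$, which lies inside the relevant range. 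Finally, your explanation of the threshold is incorrect: $X<\frac{\pi}{\pi+2}$ is exactly the condition $\frac{\pi}{2X}-1\ge\frac{\pi}{2}$, i.e.\ $-2g'(\tfrac12)\ge\pi e^{-\pi/(4X)}$, which is what lets the lower bound survive near $Y=\tfrac12$, where both sides vanish to first order and the comparison is one of slopes; it has nothing to do with the $|n|\ge2$ corrections being exponentially small. So item (1) stands, but item (2) needs the quantitative analysis of the principal pair to be carried out before the stated constants are justified.
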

\begin{remark}\label{2rem1}
By Lemma \ref{2lem4}, $-\frac{\partial}{\partial Y}\vartheta(X;Y)>0$ for $X\in\mathbb{R},Y\in(0,\frac{1}{2})$.
\end{remark}

Furthermore, the following estimates for higher order derivatives of quotients of 1-d theta functions hold:
\begin{lemma}[\cite{Luo2023,DK2024}]\label{2lem4add}
Assume that $Y>0,k\in \mathbb{N^{+}}$. It holds that
\begin{itemize}
  \item [(1)] for $ X>\frac{1}{5}$, then $\left|\frac{\vartheta_{Y}(X;kY)}{\vartheta_{Y}(X;Y)} \right|\leq k\cdot \frac{1+\mu(X)}{1-\mu(X)};$
  \item [(2)] for $X< \frac{\pi}{\pi+2} $, then $\left|\frac{\vartheta_{Y}(X;kY)}{\vartheta_{Y}(X;Y)} \right|\leq k\cdot \frac{1}{\pi}e^{\frac{\pi}{4X}};$

  \item [(3)] for $X\geq \frac{1}{5}$, then $-{\pi}\cdot \frac{1+{\nu}(X)}{1+{\mu}(X)}\leq \frac{\vartheta_{XY}(X;Y)}{\vartheta_{Y}(X;Y)} \leq -{\pi} \cdot\frac{1+\hat{\nu}(X)}{1+\hat{\mu}(X)};$
  \item [(4)] for $0<X\leq\frac{1}{2}$, then $\frac{\frac{3}{4}{X}^2+2{\pi}^2 e^{-\frac{\pi}{X}}}{-\frac{1}{2}{X}^3+2\pi{X}^2e^{-\frac{\pi}{X}}}\leq \frac{\vartheta_{XY}(X;Y)}{\vartheta_{Y}(X;Y)} \leq \frac{\pi}{4{X}^2};$
  \item [(5)] for $ X\geq \frac{1}{5}$, then $\left|\frac{\vartheta_{XY}(X;kY)}{\vartheta_{Y}(X;Y)} \right|\leq k\pi \cdot \frac{1+\nu(X)}{1-\mu(X)};$
  \item [(6)] for $0<X\leq \frac{9}{20}$, then $\left|\frac{\vartheta_{XY}(X;kY)}{\vartheta_{Y}(X;Y)} \right|\leq \frac{k}{4X^2}e^{\frac{\pi}{4X}}.$
\end{itemize}
 \end{lemma}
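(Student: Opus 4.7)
The plan is to derive all six estimates from two complementary representations of $\vartheta(X;Y)$: the direct Fourier series
$$\vartheta(X;Y)=1+2\sum_{n\ge 1}e^{-\pi n^2 X}\cos(2\pi nY)$$
for the large-$X$ regime (items (1), (3), (5)), and its Poisson dual (\ref{Poisson}) for the small-$X$ regime (items (2), (4), (6)). The one elementary trigonometric ingredient responsible for the factor $k$ in (1), (2), (5), (6) is the Chebyshev-type bound $|\sin(k\theta)|\le k|\sin\theta|$ for positive integer $k$, provable by induction.

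For items (1), (3), (5), differentiate the Fourier series and factor out the leading $n=1$ exponential $e^{-\pi X}$:
$$\vartheta_Y(X;Y)=-4\pi e^{-\pi X}\Bigl(\sin(2\pi Y)+\sum_{n\ge 2}n\,e^{-\pi(n^2-1)X}\sin(2\pi nY)\Bigr).$$
Applying $|\sin(2\pi nY)|\le n|\sin(2\pi Y)|$ to the tail bounds it by $|\sin(2\pi Y)|\mu(X)$, since $n\cdot n=n^2$ exactly matches the definition of $\mu(X)$ in (\ref{duct}). This gives the two-sided estimate
$$4\pi e^{-\pi X}|\sin(2\pi Y)|(1-\mu(X))\le |\vartheta_Y(X;Y)|\le 4\pi e^{-\pi X}|\sin(2\pi Y)|(1+\mu(X))$$
whenever $\mu(X)<1$ (true for $X>1/5$). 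Doing the same computation on $\vartheta_Y(X;kY)$, now with the extra Chebyshev step $|\sin(2\pi kY)|\le k|\sin(2\pi Y)|$, yields (1) after division. For (3) and (5), repeat the expansion on $\vartheta_{XY}$, which picks up an extra $-\pi n^2$ per term, producing $(1\pm\nu(X))$ corrections in the numerator. The alternating-sign variants $\hat\mu,\hat\nu$ appear in (3) because the argument is still $Y$ (not $kY$), so the true signs of $\sin(2\pi nY)$ can be retained rather than passed through absolute values.

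For items (2), (4), (6), apply the analogous factor-out-the-dominant-term strategy to the Poisson form
$$\vartheta_Y(X;Y)=\frac{2\pi}{X^{3/2}}\sum_{n\in\mathbb{Z}}(n-Y)\,e^{-\pi(n-Y)^2/X}.$$
For $Y\in(0,\tfrac12)$ the leading $n=0$ term is $-\tfrac{2\pi Y}{X^{3/2}}e^{-\pi Y^2/X}$, and pairing $n$ with $1-n$ shows that the remaining contributions decay by an additional $e^{-\pi(1-2Y)/X}$; the crude uniform upper bound is $|\vartheta_Y(X;Y)|\le X^{-3/2}$, while the matching lower bound carries an $e^{-\pi/(4X)}$ factor (worst case $Y\to\tfrac12$). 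Item (4) follows after expanding $\vartheta_{XY}$ to the same order. For (2) and (6), the cleanest tactic is to use the original Fourier series for the numerators $\vartheta_Y(X;kY)$ and $\vartheta_{XY}(X;kY)$, invoking $|\sin(2\pi nkY)|\le k|\sin(2\pi nY)|$ to extract the factor $k$, and the Poisson-side lower bound for the denominator; the prefactors combine to $\tfrac{1}{\pi}e^{\pi/(4X)}$ and $\tfrac{1}{4X^2}e^{\pi/(4X)}$ respectively.

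The main technical obstacle is item (4). Both numerator and denominator vanish exponentially as $X\to 0^+$, and the explicit expression $\dfrac{\tfrac34 X^2+2\pi^2 e^{-\pi/X}}{-\tfrac12 X^3+2\pi X^2 e^{-\pi/X}}$ on the lower side forces one to retain \emph{two} leading terms from each Poisson expansion and verify a delicate cancellation in the $Y$-dependence, so that the ratio stays between the two explicit expressions uniformly in $Y\in(0,\tfrac12)$ rather than only in the $X\to 0^+$ limit. Once this careful bookkeeping is in place, the other five estimates follow by the routine factor-out-and-bound-tail scheme outlined above.
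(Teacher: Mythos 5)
First, note that the paper does not prove this lemma at all: it is imported verbatim from \cite{Luo2023,DK2024}, so there is no in-paper argument to compare against and your proposal has to be judged as a stand-alone proof. Your scheme (Fourier series plus the Chebyshev bound $|\sin(2\pi nkY)|\le nk|\sin(2\pi Y)|$ for the large-$X$ items, Poisson summation for the small-$X$ items) is the right and standard one for items (1) and (5), and it closes there exactly as you describe, with $\mu$ and $\nu$ appearing as the tail sums. For item (2) the particular mix you propose (Fourier-series upper bound on the numerator against the Poisson lower bound $\pi e^{-\pi/(4X)}X^{-3/2}\sin(2\pi Y)$ on the denominator) reduces the claim to $4\pi X^{3/2}e^{-\pi X}(1+\mu(X))\le 1$, and this quantity equals $1$ to within about $10^{-4}$ near $X=0.2$ (it appears to exceed $1$ there), so that route does not safely close; the fix is trivial --- use the Poisson bound $|\vartheta_Y(X;Y')|\le X^{-3/2}|\sin(2\pi Y')|$ on the numerator as well and then apply $|\sin(2\pi kY)|\le k|\sin(2\pi Y)|$ once.

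The genuine gap is item (3) (and, relatedly, the lower bound in item (4)). The bounds $-\pi\frac{1+\nu(X)}{1+\mu(X)}$ and $-\pi\frac{1+\hat\nu(X)}{1+\hat\mu(X)}$ are precisely the limiting values of $\vartheta_{XY}/\vartheta_Y=-\pi\frac{\sum_n n^3e^{-\pi n^2X}U_{n-1}(\cos 2\pi Y)}{\sum_n ne^{-\pi n^2X}U_{n-1}(\cos 2\pi Y)}$ as $Y\to 0^+$ and $Y\to \tfrac12^-$ (where $\sin(2\pi nY)/\sin(2\pi Y)\to n$ and $(-1)^{n+1}n$ respectively), so the assertion is that the ratio is extremized at the two endpoints in $Y$. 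Your triangle-inequality machinery cannot reach this: pushing $|\sin(2\pi nY)|\le n\sin(2\pi Y)$ through numerator and denominator only yields $-\pi\frac{1+\nu}{1-\mu}\le \vartheta_{XY}/\vartheta_Y\le -\pi\frac{1-\nu}{1+\mu}$, and the upper bound here is vacuous once $\nu(X)>1$ (e.g.\ $\nu(\tfrac14)\approx 1.67$, giving an upper bound around $+1.5$ versus the required $-\pi\cdot(-0.576)\approx 1.81$ --- and a sharper bookkeeping reduces the needed inequality to $(1+\hat\mu)\nu-(1+\hat\nu)\mu\le\hat\mu-\hat\nu$, which is numerically false at $X=\tfrac14$, about $1.21\not\le 1.01$). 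The application in Lemma \ref{3Lemma9} uses exactly the value $\frac{1+\hat\nu(1/4)}{1+\hat\mu(1/4)}=-0.5759\cdots$, so the sharp form is essential and cannot be replaced by the crude one. Your one-sentence remark that ``the true signs of $\sin(2\pi nY)$ can be retained'' is not an argument; a complete proof needs a genuine monotonicity or rearrangement statement for the ratio as a function of $c=\cos(2\pi Y)\in[-1,1]$, which is the actual content of the cited references. The same criticism applies to the explicit two-term lower bound in item (4), which you correctly flag as delicate but do not actually establish.
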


\section{The transversal monotonicity}

We define the vertical line $\Gamma_{c}$ in the upper half-plane $\mathbb{H}$ as follows:
$$\Gamma_{c}:=\{z\in\mathbb{H}: \Re(z)=\frac{1}{2},\; \Im(z)\geq\frac{\sqrt3}{2}\}.$$
By the group invariance (Lemma \ref{2lem3}), one has
\begin{equation}\aligned\label{T1}
\underset{z\in \mathbb{H}}{\min}(\mathcal{K}(\alpha;z)-b \mathcal{K}(2\alpha;z))=\underset{z\in \overline{\mathcal{D}_{\mathcal{G}}}}{\min}(\mathcal{K}(\alpha;z)-b \mathcal{K}(2\alpha;z)).
\endaligned\end{equation}

In this section, we aim to establish the following theorem:
\begin{theorem}\label{3Thm1}  Assume that $ \alpha \geq \frac{3}{2}$. Then for $b\leq 2\sqrt{2}$, it holds that
\begin{equation}\aligned\nonumber
\underset{z\in \mathbb{H}}{\min}\big(\mathcal{K}(\alpha;z)-b \mathcal{K}(2\alpha;z)\big)&=\underset{z\in \overline{\mathcal{D_{\mathcal{G}}}}}{\min}\big(\mathcal{K}(\alpha;z)-b \mathcal{K}(2\alpha;z)\big)\\
&=\underset{z\in \Gamma_{c}}{\min}\big(\mathcal{K}(\alpha;z)-b \mathcal{K}(2\alpha;z)\big).
\endaligned\end{equation}
\end{theorem}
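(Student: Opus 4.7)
The plan is to combine Lemma \ref{2lem3}, which immediately reduces the minimization over $\mathbb{H}$ to $\overline{\mathcal{D}_{\mathcal{G}}}$, with a transversal monotonicity argument that pushes the slice-minimum of $F(z):=\mathcal{K}(\alpha;z)-b\,\mathcal{K}(2\alpha;z)$ onto the right edge $\Gamma_{c}$. Specifically, I would establish $\partial_{x}F(z)<0$ for every $z=x+iy\in\mathcal{D}_{\mathcal{G}}$. Because $\tau\mapsto 1-\bar{\tau}$ lies in $\mathcal{G}$, $F$ is symmetric about $x=\tfrac{1}{2}$, so strict monotonicity along each horizontal slice of $\mathcal{D}_{\mathcal{G}}$ forces the slice minimum onto $x=\tfrac{1}{2}$, which is exactly $\Gamma_{c}$ once $y\geq\tfrac{\sqrt{3}}{2}$.

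To compute $\partial_{x}F$ I would apply Lemma \ref{2lem2} to the partial-Poisson representation
$\theta(\alpha;z)=\sqrt{y/\alpha}\sum_{m\in\mathbb{Z}}e^{-\pi\alpha m^{2}y}\vartheta(y/\alpha;mx)$,
which, after differentiating in $x$ and then in $\alpha$, yields
\begin{equation*}
-\partial_{x}\mathcal{K}(\alpha;z)=\frac{2}{\pi}\sqrt{y/\alpha}\sum_{m\geq 1}m\,e^{-\pi\alpha m^{2}y}\!\left[\big(\pi m^{2}y+\tfrac{1}{2\alpha}\big)\big(-\vartheta_{Y}(y/\alpha;mx)\big)-\tfrac{y}{\alpha^{2}}\,\vartheta_{XY}(y/\alpha;mx)\right].
\end{equation*}
By Remark \ref{2rem1} and the sign information built into Lemma \ref{2lem4add}, the $m=1$ summand is strictly positive, while Lemma \ref{2lem4add} also controls the $m\geq 2$ summands as a uniformly small multiplicative correction to the $m=1$ term; the same conclusions apply to $-\partial_{x}\mathcal{K}(2\alpha;z)$. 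Consequently $\partial_{x}F<0$ is equivalent to the pointwise ratio bound
\begin{equation*}
\frac{-\partial_{x}\mathcal{K}(\alpha;z)}{-\partial_{x}\mathcal{K}(2\alpha;z)} > b,
\end{equation*}
so it suffices to verify that the left-hand side exceeds $2\sqrt{2}$ throughout $\mathcal{D}_{\mathcal{G}}$.

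To verify this ratio bound I would split according to whether $y/\alpha\geq\tfrac{1}{5}$ or $y/\alpha\leq\tfrac{\pi}{\pi+2}$ so that Lemma \ref{2lem4} provides sharp two-sided control of $\vartheta_{Y}(y/\alpha;mx)$ in each regime. After factoring the common $m=1$ prefactor $\sqrt{y/\alpha}\,e^{-\pi\alpha y}$ from numerator and denominator and using Lemma \ref{2lem4add} to absorb the $m\geq 2$ contributions, the leading quotient simplifies to a quantity that tends to $2^{3/2}=2\sqrt{2}$ in the $y\to\infty$ limit along $\Gamma_{c}$. This limit encodes the $\alpha^{-3/2}$ scaling of the leading Epstein piece of $\mathcal{K}$ and pinpoints the sharp threshold $b_{c_{2}}=2\sqrt{2}$; away from the limit, the leading quotient should exceed $2\sqrt{2}$, which gives the required pointwise inequality.

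The main obstacle will be the intermediate range of $y$ where neither the large-$X$ nor the small-$X$ expansion of $\vartheta(X;\cdot)$ is sharply dominated by a single mode, and where, after dividing the $\alpha$-quantity by the $2\alpha$-quantity, the $m=2,3$ corrections become comparable to the leading $m=1$ term. Controlling this strip will rely on the sharp decay rates $\mu,\nu,\hat{\mu},\hat{\nu}$ from \eqref{duct} together with a careful comparison of the factors $e^{-\pi\alpha m^{2}y}$ against $e^{-2\pi\alpha m^{2}y}$; the hypothesis $\alpha\geq\tfrac{3}{2}$ should be exactly what is needed to keep these tail series summable and uniformly absorbable into the gap between the leading ratio and $2\sqrt{2}$.
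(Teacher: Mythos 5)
Your proposal follows essentially the same route as the paper: Lemma \ref{2lem3} reduces the problem to $\overline{\mathcal{D}_{\mathcal{G}}}$, and the real content is the transversal monotonicity $\partial_x\big(\mathcal{K}(\alpha;z)-b\mathcal{K}(2\alpha;z)\big)<0$ on $\mathcal{D}_{\mathcal{G}}$ (Proposition \ref{Thm2}), proved via the partial Poisson expansion, isolation of the $m=1$ mode, control of the $m\geq 2$ tail by Lemmas \ref{2lem4} and \ref{2lem4add}, and a case split in $y/\alpha$; your displayed formula for $-\partial_x\mathcal{K}$ agrees with Lemma \ref{3Lemma3}. Two points, however, need repair. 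First, recasting $\partial_xF<0$ as the ratio bound $\frac{-\partial_x\mathcal{K}(\alpha;z)}{-\partial_x\mathcal{K}(2\alpha;z)}>b$ presupposes $-\partial_x\mathcal{K}(2\alpha;z)>0$ throughout $\mathcal{D}_{\mathcal{G}}$, which is itself a nontrivial statement (the $b=0$ case) requiring the same machinery; the paper sidesteps this by keeping both contributions in a single expression $I+\mathcal{E}$ and using $b\leq 2\sqrt{2}$ only to bound the subtracted piece. Second, and more substantively, your claim that the leading quotient tends to $2\sqrt{2}$ as $y\to\infty$ and thereby ``pinpoints'' $b_{c_2}=2\sqrt{2}$ is false: the $m=1$ prefactors are $e^{-\pi\alpha y}$ in the numerator and $e^{-2\pi\alpha y}$ in the denominator, so the ratio grows like $\sqrt{2}\,e^{\pi y(\alpha-\frac{1}{2\alpha})}\to+\infty$ (up to algebraic factors). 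The monotonicity is therefore far from sharp at $b=2\sqrt{2}$ — the paper's estimates leave a definite quantitative margin — and the threshold $2\sqrt{2}$ actually originates from the $y\to\infty$ asymptotics of the energy itself on $\Gamma_c$, i.e.\ the constant ($n=0$) modes in \eqref{5eq1} used in Proposition \ref{Prop51}, not from the derivative ratio. This misidentification does not doom your strategy (the large-$y$ regime is in fact the easiest one), but it means the genuine obstacle is not where you locate it: the delicate range is the intermediate band $\frac{1}{4}\leq\frac{y}{\alpha}\leq\frac{1}{2}$, where neither regime of Lemma \ref{2lem4} is simultaneously available for both arguments $\frac{y}{\alpha}$ and $\frac{y}{2\alpha}$ and where the hypothesis $\alpha\geq\frac{3}{2}$ is actually consumed (the paper's Case C).
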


To prove Theorem \ref{3Thm1}, we establish the following monotonicity result:

\begin{proposition}\label{Thm2}  Assume that $ \alpha \geq \frac{3}{2},\: b\leq 2\sqrt{2}$. Then it holds that
\begin{equation}\aligned\nonumber
\frac{\partial}{\partial{x}}\big(\mathcal{K}(\alpha;z)-b \mathcal{K}(2\alpha;z)\big)< 0,\:\: for\:\: z\in \mathcal{D_{G}}.
\endaligned\end{equation}
\end{proposition}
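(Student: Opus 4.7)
The plan is to derive a Fourier-type expansion for $\partial_x\mathcal{K}(\alpha;z)$, isolate the dominant $m=1$ mode, and use the exponential separation of scales between $\alpha$ and $2\alpha$ together with the threshold $b\leq 2\sqrt{2}$. Applying the Poisson summation formula \eqref{Poisson} in the $n$-variable rewrites
\[
\theta(\alpha;z)=\sqrt{y/\alpha}\sum_{m\in\mathbb{Z}}e^{-\pi\alpha m^2 y}\,\vartheta(y/\alpha;mx).
\]
Combined with $\mathcal{K}=-\theta_\alpha/\pi$ from Lemma \ref{2lem2} and the oddness of $\vartheta_Y$ in $Y$, differentiating in $x$ and then in $\alpha$ and folding the $\pm m$ terms yields
\[
\partial_x\mathcal{K}(\alpha;z)=\frac{2\sqrt{y/\alpha}}{\pi}\sum_{m\geq 1} m\,e^{-\pi\alpha m^2 y}\!\left[\bigl(\pi m^2 y+\tfrac{1}{2\alpha}\bigr)\vartheta_{Y}(y/\alpha;mx)+\tfrac{y}{\alpha^2}\vartheta_{XY}(y/\alpha;mx)\right],
\]
with an analogous identity at $\alpha\mapsto 2\alpha$ involving the argument $y/(2\alpha)$.

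On $\mathcal{D}_{\mathcal{G}}$ we have $x\in(0,\tfrac12)$, so $\sin(2\pi x)>0$ and by Remark \ref{2rem1}, $\vartheta_{Y}(y/\alpha;x)<0$. I would isolate the $m=1$ term, and use Lemma \ref{2lem4add}(3)--(4) to bound the ratio $\vartheta_{XY}/\vartheta_Y$, thereby showing that the $m=1$ bracket is strictly negative and of order $\pi y\,|\vartheta_Y(y/\alpha;x)|$. For the tail $m\geq 2$, the ratio bounds (1)--(2) and (5)--(6) of Lemma \ref{2lem4add} convert the tail into a comparison with the $m=1$ term up to a linear factor in $m$, and the geometric decay $e^{-\pi\alpha(m^2-1)y}$ with $\alpha y\geq 3\sqrt{3}/4$ renders the tail a small perturbation. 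The corresponding analysis applies verbatim to $\partial_x\mathcal{K}(2\alpha;z)$.

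Combining the $m=1$ contributions, the sign of $\partial_x(\mathcal{K}(\alpha;z)-b\mathcal{K}(2\alpha;z))$ is governed, to leading order, by the bracket
\[
\sqrt{y/\alpha}\,e^{-\pi(\alpha y+y/\alpha)}\Bigl[\,1-\tfrac{b}{\sqrt{2}}\,e^{-\pi y(\alpha-\frac{1}{2\alpha})}\,\Bigr].
\]
On $\mathcal{D}_{\mathcal{G}}$ with $\alpha\geq\tfrac32$ and $y\geq\tfrac{\sqrt3}{2}$, one has $y(\alpha-\tfrac{1}{2\alpha})\geq \tfrac{\sqrt3}{2}\cdot\tfrac{7}{6}=\tfrac{7\sqrt3}{12}$, and $b\leq 2\sqrt{2}$ forces the bracket uniformly positive with a gap large enough to absorb the $m\geq 2$ tail corrections derived in the previous step; this gives the claimed strict inequality $\partial_x(\mathcal{K}(\alpha;z)-b\mathcal{K}(2\alpha;z))<0$.

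The main obstacle is the bookkeeping when the two scales $X_1=y/\alpha$ and $X_2=y/(2\alpha)$ fall in different regimes of Lemma \ref{2lem4add} (i.e.\ one large and one small), since the tail and the $\vartheta_{XY}/\vartheta_Y$ estimates must then be invoked from different items of that lemma. To manage this I would partition $\mathcal{D}_{\mathcal{G}}$ according to the location of $X_1,X_2$ relative to $1/5$ and $\pi/(\pi+2)$, and verify the inequality on each subdomain separately, with the borderline $X\in[\tfrac{\pi}{\pi+2},\tfrac15]$ region (empty, but the overlap interval $[1/5,\pi/(\pi+2)]$ nontrivial) handled by either estimate. The sharpness of the threshold $b=2\sqrt2$ is not coincidental: the $m=1$ leading-order comparison saturates precisely at this value as $y\to\infty$, matching the second critical value $b_{c_2}=2\sqrt2$ appearing in Theorem \ref{Th1}.
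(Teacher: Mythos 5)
Your proposal follows essentially the same route as the paper: the expansion you derive for $\partial_x\mathcal{K}$ is exactly the content of Lemma \ref{3Lemma3} (your $m=1$ bracket is, up to the normalizing prefactor, the paper's major term $I(\alpha;z;b)$ and your $m\geq2$ tail is $\mathcal{E}(\alpha;z;b)$), the ratio bounds are the same items of Lemmas \ref{2lem4} and \ref{2lem4add}, and your partition by the location of $y/\alpha$ and $y/(2\alpha)$ mirrors the paper's Cases A, B, C. The only inaccuracy is your closing remark: the comparison factor $\frac{b}{\sqrt{2}}e^{-\pi y(\alpha-\frac{1}{2\alpha})}$ tends to $0$ as $y\to\infty$, so the transversal estimate does not saturate at $b=2\sqrt{2}$ in that limit --- that threshold is sharp for the existence of minimizers via the cusp asymptotics of Proposition \ref{Prop51}, not for this monotonicity --- but this side comment does not affect the proof.
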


Note that Proposition \ref{Thm2} implies Theorem \ref{3Thm1}.
The rest of this section is devoted to proving Proposition \ref{Thm2}.

\subsection{The estimates}
  Using the ideas from B\'etermin \cite{Bet2016} and Luo-Wei \cite{LW2022,Luo2023,Luo2024+}, we will utilize the exponential expansion of the theta function, such that Theorem \ref{Thm2} is reduced to the study of Jacobi theta function.
\begin{lemma}[\cite{Luo2022,Mon1988}\label{3Lemma1}] We have the exponential expansion of $\theta (\alpha;z)$. For $\alpha, y>0$, it holds that
\begin{equation}\aligned\nonumber
\theta (\alpha;z)
&=\sqrt{\frac{y}{\alpha}}\sum_{n\in\mathbb{Z}}e^{-\alpha \pi y n^2}\vartheta(\frac{y}{\alpha};nx)\\
&=2\sqrt{\frac{y}{\alpha}}\sum_{n=1}^\infty e^{-\alpha \pi y n^2}\vartheta(\frac{y}{\alpha};nx)+\sqrt{\frac{y}{\alpha}}\vartheta(\frac{y}{\alpha};0).
\endaligned\end{equation}
\end{lemma}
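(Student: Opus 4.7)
The plan is to prove the expansion by a direct Poisson-summation computation applied to the inner sum in the lattice representation of $\theta(\alpha;z)$. Recall from the excerpt that $\mathcal{K}(\alpha;z) = -\pi^{-1}\partial_\alpha \theta(\alpha;z)$, so $\theta(\alpha;z) = \sum_{(m,n)\in\mathbb{Z}^2} e^{-\pi\alpha |mz+n|^2/\Im(z)}$. Writing $z=x+iy$ with $y>0$, I would first use the algebraic identity $|mz+n|^2 = m^2 y^2 + (mx+n)^2$ to split the double Gaussian into a product of two one-dimensional pieces. Absolute convergence for $\alpha,y>0$ is immediate from Gaussian decay, so Fubini gives
\begin{equation*}
\theta(\alpha;z) = \sum_{m\in\mathbb{Z}} e^{-\pi\alpha m^2 y}\sum_{n\in\mathbb{Z}} e^{-\pi\frac{\alpha}{y}(n+mx)^2}.
\end{equation*}

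Next, I would invoke the Poisson-summation identity $(\ref{Poisson})$ already recalled in the excerpt, namely $\vartheta(X;Y)=X^{-1/2}\sum_{n\in\mathbb{Z}} e^{-\pi(n-Y)^2/X}$, applied with $X=y/\alpha$ and $Y=-mx$. This rewrites the inner sum as $\sqrt{y/\alpha}\,\vartheta(y/\alpha;-mx)$. The evenness of $\vartheta(X;\cdot)$ in its second argument, immediate from the series definition $(\ref{TXY})$ upon sending $n\mapsto -n$, then lets me replace $\vartheta(y/\alpha;-mx)$ by $\vartheta(y/\alpha;mx)$. Substituting back produces the first displayed form of the lemma (after renaming the outer dummy variable $m$ to $n$ to match the statement).

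To pass from the first displayed form to the second, I would split the sum over $n\in\mathbb{Z}$ into the $n=0$ contribution and the $n\neq 0$ tail, and pair each positive $n$ with its negative counterpart. Since $\vartheta(y/\alpha;-nx) = \vartheta(y/\alpha;nx)$ by the evenness noted above and the Gaussian weight $e^{-\pi\alpha n^2 y}$ is even in $n$, the two half-tails collapse into $2\sum_{n\ge 1} e^{-\pi\alpha n^2 y}\vartheta(y/\alpha;nx)$, and the isolated $n=0$ piece contributes $\sqrt{y/\alpha}\,\vartheta(y/\alpha;0)$.

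The only place requiring genuine care is aligning the Poisson variables so that the $X^{-1/2}=\sqrt{\alpha/y}$ factor arising from Poisson recombines cleanly with nothing else to yield precisely the prefactor $\sqrt{y/\alpha}$ in the statement; the sign inside the second argument must also be tracked before the evenness collapse. Beyond this bookkeeping, no analytic subtlety arises, since all series are uniformly dominated by Gaussian decay for $\alpha,y>0$. The result is essentially the classical Jacobi-type transformation of a rank-two lattice theta function after applying Poisson duality in one of the two summation variables.
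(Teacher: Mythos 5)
Your proof is correct and is exactly the standard derivation: split $|mz+n|^2=(mx+n)^2+m^2y^2$, apply Poisson summation (equation \eqref{Poisson}) in the inner variable with $X=y/\alpha$, $Y=-mx$, use evenness of $\vartheta(X;\cdot)$, and fold the $\pm n$ terms. The paper itself omits the proof, citing \cite{Luo2022,Mon1988}, and the argument given there is this same one-variable Poisson duality computation, so there is nothing to add beyond your correct bookkeeping of the $X^{-1/2}$ prefactor and the sign in the second argument.
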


Based on Lemmas \ref{2lem2} and \ref{3Lemma1}, we derive the exponential expansion of the function $(\mathcal{K}(\alpha;z)-b \mathcal{K}(2\alpha;z))$ in Lemma \ref{3Lemma2}. The proof of Lemma \ref{3Lemma2} is straightforward, hence we omit it.
\begin{lemma}\label{3Lemma2} We have the following expression of $(\mathcal{K}(\alpha;z)-b \mathcal{K}(2\alpha;z))$: for $\alpha,y>0$, it holds that
\begin{equation}\aligned\nonumber
\mathcal{K}(\alpha;z)-b \mathcal{K}(2\alpha;z)=&\frac{1}{\pi}{2}^{-\frac{5}{2}}\alpha^{-\frac{5}{2}}y^{\frac{1}{2}}\Big(
2\sqrt{2} \alpha\sum_{n\in\mathbb{Z}} e^{-\pi\alpha y n^2}\vartheta(\frac{y}{\alpha};nx)
-b \alpha \sum_{n\in\mathbb{Z}} e^{-2\pi\alpha  y n^2}\vartheta(\frac{y}{2 \alpha};nx)\\
&+4\sqrt{2}\pi \alpha^2y\sum_{n\in\mathbb{Z}} n^2e^{-\pi\alpha y n^2}\vartheta(\frac{y}{\alpha};nx)
-4 \pi b \alpha^2y\sum_{n\in\mathbb{Z}} n^2e^{-2\pi\alpha  y n^2}\vartheta(\frac{y}{2 \alpha};nx)\\
&+4\sqrt{2}y\sum_{n\in\mathbb{Z}} e^{-\pi\alpha y n^2}\vartheta_X(\frac{y}{\alpha};nx)
-b y\sum_{n\in\mathbb{Z}} e^{-2\pi\alpha y n^2}\vartheta_X(\frac{y}{2 \alpha};nx)\Big).
\endaligned\end{equation}
\end{lemma}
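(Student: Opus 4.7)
The plan is to derive the identity by combining Lemmas \ref{2lem2} and \ref{3Lemma1} and carrying out the $\alpha$-differentiation of the exponential expansion of $\theta(\alpha;z)$ term by term, then rescaling for $\mathcal{K}(2\alpha;z)$.

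First, I would start from Lemma \ref{2lem2}, which reads $\mathcal{K}(\alpha;z) = -\frac{1}{\pi}\partial_\alpha \theta(\alpha;z)$, and substitute the expansion from Lemma \ref{3Lemma1}:
\begin{equation}\aligned\nonumber
\theta(\alpha;z) = \sqrt{\tfrac{y}{\alpha}}\sum_{n\in\mathbb{Z}} e^{-\alpha\pi y n^2}\,\vartheta\!\left(\tfrac{y}{\alpha};nx\right).
\endaligned\end{equation}
Differentiating under the sum (justified by uniform convergence on compact subsets of $\{\alpha>0\}$, since both the Gaussian weight and $\vartheta$ decay rapidly in $|n|$), the product rule produces three families of terms coming respectively from (i) the prefactor $\sqrt{y/\alpha}$, contributing $-\tfrac12 y^{1/2}\alpha^{-3/2}$; (ii) the weight $e^{-\alpha\pi y n^2}$, contributing $-\pi y n^2$; and (iii) the first argument of $\vartheta$, contributing $-y/\alpha^2$ multiplied by $\vartheta_X$. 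Collecting these yields
\begin{equation}\aligned\nonumber
\mathcal{K}(\alpha;z) = \tfrac{1}{\pi}\Big[\tfrac12 y^{1/2}\alpha^{-3/2}\!\!\sum_n e^{-\pi\alpha y n^2}\vartheta(\tfrac{y}{\alpha};nx)+\pi y^{3/2}\alpha^{-1/2}\!\!\sum_n n^2 e^{-\pi\alpha y n^2}\vartheta(\tfrac{y}{\alpha};nx)+y^{3/2}\alpha^{-5/2}\!\!\sum_n e^{-\pi\alpha y n^2}\vartheta_X(\tfrac{y}{\alpha};nx)\Big].
\endaligned\end{equation}

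Second, I would apply the same formula with $\alpha$ replaced by $2\alpha$ to obtain an analogous expression for $\mathcal{K}(2\alpha;z)$: each $\alpha$-power and each exponential weight gets rescaled, and the $\vartheta$-argument $y/\alpha$ becomes $y/(2\alpha)$. Multiplying by $b$ and subtracting from the expression for $\mathcal{K}(\alpha;z)$ yields six terms organized in three natural pairs (constant, $n^2$-weighted, and $\vartheta_X$-type).

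Third, I would factor out the common prefactor $\frac{1}{\pi} 2^{-5/2}\alpha^{-5/2} y^{1/2}$. A short verification of the six numerical coefficients is required: for the $\mathcal{K}(\alpha;z)$ contributions this produces $2\sqrt{2}\alpha$, $4\sqrt{2}\pi\alpha^2 y$, $4\sqrt{2}y$, while for the $b\,\mathcal{K}(2\alpha;z)$ contributions the factors $2^{-3/2}$, $2^{-1/2}$ and $2^{-5/2}$ (inherited from $(2\alpha)^{-3/2}$, $\sqrt{y/(2\alpha)}$ and $(2\alpha)^{-2}\sqrt{y/(2\alpha)}$) combine with the extracted $2^{5/2}$ to give $b\alpha$, $4\pi b\alpha^2 y$, $by$, exactly matching the right-hand side of Lemma \ref{3Lemma2}.

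There is no genuine analytic obstacle here; the only point of care is the chain rule for $\partial_\alpha \vartheta(y/\alpha;nx) = -(y/\alpha^2)\vartheta_X(y/\alpha;nx)$ and the consistent factoring of $2^{-5/2}\alpha^{-5/2}y^{1/2}$. Since the author also remarks that the proof is straightforward and omits it, the task is essentially a careful bookkeeping exercise rather than a theorem requiring new ideas.
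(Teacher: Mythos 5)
Your proposal is correct and follows exactly the route the paper intends (the authors state that Lemma \ref{3Lemma2} follows from Lemmas \ref{2lem2} and \ref{3Lemma1} and omit the straightforward computation): differentiate the exponential expansion of $\theta(\alpha;z)$ in $\alpha$ via the product and chain rules, repeat with $2\alpha$, and factor out $\frac{1}{\pi}2^{-5/2}\alpha^{-5/2}y^{1/2}$. All six coefficients in your bookkeeping check out against the stated identity.
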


By Lemma \ref{3Lemma2}, we have:

\begin{lemma}\label{3Lemma3} We have the following identity for the partial $x$-derivative of the function $(\mathcal{K}(\alpha;z)-b \mathcal{K}(2\alpha;z))$:
\begin{equation}\aligned\nonumber
-\frac{\partial}{\partial{x}}\big(\mathcal{K}(\alpha;z)-b \mathcal{K}(2\alpha;z)\big)=\frac{\sqrt{2}}{4\pi}\alpha^{-\frac{5}{2}}y^{\frac{1}{2}}e^{-\pi\alpha y}
\cdot \big(I(\alpha;z;b)+\mathcal{E}(\alpha;z;b)\big),
\endaligned\end{equation}
where
\begin{equation}\aligned\label{3eq2}
I(\alpha;z;b):=&-2\sqrt{2}\alpha \vartheta_{Y}(\frac{y}{\alpha};x)-4\sqrt{2}\pi \alpha^2y\vartheta_{Y}(\frac{y}{\alpha};x)-4\sqrt{2}y\vartheta_{XY}(\frac{y}{\alpha};x)\\
&+b \alpha  {e}^{-\pi\alpha y}\vartheta_{Y}(\frac{y}{2 \alpha};x)+4\pi b\alpha^2ye^{-\pi\alpha  y }\vartheta_{Y}(\frac{y}{2 \alpha};x)
+by e^{-\pi\alpha y}\vartheta_{XY}(\frac{y}{2 \alpha};x),\\
\endaligned\end{equation}
and
\begin{equation}\aligned\label{3eq3}
\mathcal{E}(\alpha;z;b):=&-2\sqrt{2}\alpha\underset{n=2}{\overset{\infty}{\sum}}n {e}^{-\pi\alpha y (n^2-1)}\vartheta_{Y}(\frac{y}{\alpha};nx)
+b \alpha \underset{n=2}{\overset{\infty}{\sum}} n {e}^{-\pi\alpha y (2 n^2-1)}\vartheta_{Y}(\frac{y}{2 \alpha};nx)\\
&-4\sqrt{2}\pi \alpha^2y\underset{n=2}{\overset{\infty}{\sum}} n^3e^{-\pi\alpha y (n^2-1)}\vartheta_{Y}(\frac{y}{\alpha};nx)
+4\pi b\alpha^2y\underset{n=2}{\overset{\infty}{\sum}} n^{3}e^{-\pi\alpha  y (2{n}^2-1)}\vartheta_{Y}(\frac{y}{2 \alpha};nx)\\
&-4\sqrt{2}y\underset{n=2}{\overset{\infty}{\sum}} n e^{-\pi\alpha y (n^2-1)}\vartheta_{XY}(\frac{y}{\alpha};nx)
+by\underset{n=2}{\overset{\infty}{\sum}}n e^{-\pi\alpha y(2 n^2-1)}\vartheta_{XY}(\frac{y}{2 \alpha};nx).
\endaligned\end{equation}
\end{lemma}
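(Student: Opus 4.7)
The plan is to apply $\partial_x$ directly to the identity of Lemma \ref{3Lemma2} and then group the resulting terms by their index $n$. Since the prefactor $\frac{1}{\pi}2^{-\frac{5}{2}}\alpha^{-\frac{5}{2}}y^{\frac{1}{2}}$ in Lemma \ref{3Lemma2} is independent of $x$, differentiation commutes with it and acts only on the six inner sums. Each summand has the schematic form $e^{-\pi\beta y n^{2}}\,T(y/\beta;nx)$ with $\beta\in\{\alpha,2\alpha\}$ and $T\in\{\vartheta,\vartheta_{X}\}$, so by the chain rule $\partial_{x}$ produces an extra factor $n$ and promotes $T$ to $T_{Y}$ or $T_{XY}$. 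This is why $\vartheta_{Y}$ and $\vartheta_{XY}$ are the only Jacobi-theta quantities appearing in \eqref{3eq2}--\eqref{3eq3}.

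Two symmetries then streamline the sum over $\mathbb{Z}$. First, the $n=0$ term disappears, both because of the explicit factor $n$ from the chain rule and because $\vartheta(X;Y)$ is even in $Y$. Second, from $\vartheta(X;-Y)=\vartheta(X;Y)$ one obtains $\vartheta_{Y}(X;-nx)=-\vartheta_{Y}(X;nx)$ and analogously $\vartheta_{XY}(X;-nx)=-\vartheta_{XY}(X;nx)$; thus the $n$ and $-n$ contributions coincide, so $\sum_{n\in\mathbb{Z}\setminus\{0\}}$ collapses to $2\sum_{n\geq 1}$. The factor $2$ so produced converts the outer constant $\frac{1}{\pi}2^{-\frac{5}{2}}$ coming from Lemma \ref{3Lemma2} into the $\frac{\sqrt{2}}{4\pi}$ of the target identity.

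Finally, I will factor the common exponential $e^{-\pi\alpha y}$ out uniformly: for the $\beta=\alpha$ sums this replaces $e^{-\pi\alpha y n^{2}}$ by $e^{-\pi\alpha y(n^{2}-1)}$, and for the $\beta=2\alpha$ sums it replaces $e^{-2\pi\alpha y n^{2}}$ by $e^{-\pi\alpha y(2n^{2}-1)}$. The $n=1$ slice of all six resulting sums, after incorporating the overall sign from $-\partial_{x}$ and the sign of the $b$-terms in Lemma \ref{3Lemma2}, reproduces the six summands of $I(\alpha;z;b)$ in \eqref{3eq2}; note in particular that the $e^{-\pi\alpha y}$ factors multiplying the three $b$-terms of $I(\alpha;z;b)$ are precisely the residual $e^{-\pi\alpha y(2\cdot 1^{2}-1)}=e^{-\pi\alpha y}$ from the $\beta=2\alpha$ sums at $n=1$. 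The $n\geq 2$ tails then assemble into the six series defining $\mathcal{E}(\alpha;z;b)$ in \eqref{3eq3}. There is no genuine obstacle in this argument; the only technical care required is to track the six numerical constants $(2\sqrt{2},\,4\sqrt{2}\pi,\,4\sqrt{2})$ and $(b,\,4\pi b,\,b)$ with their signs in parallel across the six sums, which is precisely why the authors dismiss the proof as straightforward.
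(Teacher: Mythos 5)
Your proposal is correct and follows exactly the route the paper intends (the paper states the lemma as an immediate consequence of Lemma \ref{3Lemma2} without writing out the details): differentiate the expansion term by term, use the oddness of $\vartheta_Y$ and $\vartheta_{XY}$ in $Y$ to collapse $\sum_{n\in\mathbb{Z}}$ to $2\sum_{n\ge 1}$ (whence $\tfrac{2}{\pi}2^{-5/2}=\tfrac{\sqrt2}{4\pi}$), factor out $e^{-\pi\alpha y}$, and split off the $n=1$ slice as $I$ and the $n\ge 2$ tail as $\mathcal{E}$. All constants and signs check out, so nothing further is needed.
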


In view of Lemma \ref{3Lemma3}, Theorem \ref{Thm2} is equivalent to the following lemma:
\begin{lemma}\label{lem3.4} Assume that $ \alpha \geq \frac{3}{2},\: b\leq 2\sqrt{2}$. Then for $z\in \mathcal{D_{G}}$, it holds that
\begin{equation}\aligned\nonumber
I(\alpha;z;b)+\mathcal{E}(\alpha;z;b)>0.
\endaligned\end{equation}
\end{lemma}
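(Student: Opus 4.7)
The plan is to establish $I(\alpha;z;b) + \mathcal{E}(\alpha;z;b) > 0$ by isolating a robustly positive main part of $I$ and treating $\mathcal{E}$ as an exponentially suppressed perturbation that cannot overturn it. By Remark 2.1, both $-\vartheta_Y(y/\alpha;x)$ and $-\vartheta_Y(y/(2\alpha);x)$ are strictly positive on $\mathcal{D}_{\mathcal G}$, so I begin by factoring the six terms of \eqref{3eq2} into the two natural groups according to whether they carry the prefactor $e^{-\pi\alpha y}$:
$$I = (-\vartheta_Y(y/\alpha;x))\cdot A_1(\alpha,y,x) \;-\; e^{-\pi\alpha y}(-\vartheta_Y(y/(2\alpha);x))\cdot A_2(\alpha,y,x,b),$$
where $A_1$ and $A_2$ absorb the $\vartheta_{XY}$ contributions via the ratios $\vartheta_{XY}/\vartheta_Y$. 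Lemma 2.4(3)--(4) supplies two-sided bounds on these ratios in the regimes $X \geq 1/5$ and $0 < X \leq 1/2$, so up to corrections controlled by $\mu,\nu,\hat\mu,\hat\nu$ I can replace $A_1, A_2$ by explicit algebraic expressions in $(\alpha, y)$ whose leading sizes are $4\sqrt{2}\pi\alpha^2 y$ and $4\pi b \alpha^2 y$ respectively.

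The next step is to compare $-\vartheta_Y(y/(2\alpha);x)$ with $-\vartheta_Y(y/\alpha;x)$. When $y/\alpha > 1/5$ I use the exponential bounds $\underline\vartheta_1, \overline\vartheta_1$ of Lemma 2.4(1), so that the ratio is essentially $e^{\pi y/(2\alpha)}$; when $y/\alpha \leq 1/5$ I use the Poisson/algebraic bounds $\underline\vartheta_2, \overline\vartheta_2$ of Lemma 2.4(2), so that the ratio is essentially $2^{3/2} = 2\sqrt{2}$. In either regime the comparison reduces to checking the scalar inequality
$$A_1 > e^{-\pi\alpha y}\cdot \frac{-\vartheta_Y(y/(2\alpha);x)}{-\vartheta_Y(y/\alpha;x)}\cdot A_2,$$
which is governed by the decisive factor $e^{-\pi\alpha y}$ against the mild ratio. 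The critical combination $b \cdot 2\sqrt{2} \leq (2\sqrt{2})^2 = 8$ in the small-$X$ regime is beaten by $e^{\pi\alpha y} \geq e^{3\sqrt{3}\pi/4} \gg 8$, so $A_1$ strictly dominates for $\alpha \geq 3/2$, $y \geq \sqrt{3}/2$, $b \leq 2\sqrt{2}$. This is the mechanism by which the threshold $2\sqrt{2}$ enters naturally.

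For the error $\mathcal{E}$ in \eqref{3eq3}, each summand carries the factor $e^{-\pi\alpha y(n^2-1)}$ or $e^{-\pi\alpha y(2n^2-1)}$ with $n \geq 2$. Applying Lemma 2.4(1)--(2) and (5)--(6) to the ratios $|\vartheta_Y(X;nx)/\vartheta_Y(X;x)|$ and $|\vartheta_{XY}(X;nx)/\vartheta_Y(X;x)|$ gives linear-in-$n$ bounds with a tame $X$-dependence, so each series in \eqref{3eq3} collapses to its $n=2$ term up to a geometric tail. Consequently $|\mathcal{E}|$ is bounded by $(-\vartheta_Y(y/\alpha;x))$ times $e^{-3\pi\alpha y}$ times a polynomial factor in $\alpha, y$, which is dwarfed by the lower bound on $I$ obtained above.

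The main obstacle I anticipate is the clean split at the threshold $X = y/\alpha = 1/5$: Lemma 2.4 gives genuinely different analytic forms above and below it, so both the main-term comparison and the error estimate must be carried through twice, each case reducing to an explicit scalar inequality in $\alpha$ and $y$ that has to be verified uniformly on $\alpha \geq 3/2$, $y > \sqrt{3}/2$. A secondary delicate point is that the constant $b_{c_2} = 2\sqrt{2}$ identified in Theorem 1.1 must emerge exactly from the small-$X$ regime rather than as a crude overestimate; this forces one to track the leading behavior of the quotient $-\vartheta_Y(y/(2\alpha);x)/(-\vartheta_Y(y/\alpha;x))$ sharply through the ratio $2^{3/2}$, not just up to an absolute constant.
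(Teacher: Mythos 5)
Your proposal follows essentially the same route as the paper: the same factored form of $I(\alpha;z;b)$ (Lemma \ref{3Lemma4}), the same use of Lemmas \ref{2lem4} and \ref{2lem4add} to control the quotients $\vartheta_{XY}/\vartheta_{Y}$ and $\vartheta_{Y}(\cdot;nx)/\vartheta_{Y}(\cdot;x)$, domination of the $b$-term by the factor $e^{-\pi\alpha y}$ (with the critical product $b\cdot 2\sqrt{2}\leq 8$ appearing exactly as in the paper's Case B bound), and exponential suppression of $\mathcal{E}$ coming from the $n\geq 2$ summation. The only organizational difference is that the paper splits into three regimes $\frac{y}{\alpha}\geq\frac{1}{2}$, $\frac{y}{\alpha}\leq\frac{1}{4}$, and $\frac{y}{\alpha}\in[\frac{1}{4},\frac{1}{2}]$ --- the last being the mixed case in which $\frac{y}{\alpha}$ and $\frac{y}{2\alpha}$ fall under different items of Lemma \ref{2lem4} --- rather than your two-case split at $\frac{1}{5}$, which is precisely the threshold-matching issue you already flagged as the main obstacle.
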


 To explain Lemma \ref{lem3.4}, we aim to illustrate that $I(\alpha;z;b)$ acts as the major term, which is positive, while $\mathcal{E}(\alpha;z;b)$ represents an error term that does not change the sign of the whole expression. Based on \eqref{3eq2} and \eqref{3eq3}, we will provide a deformation of $I(\alpha;z;b)$ and an upper bound for $\mathcal{E}(\alpha;z;b)$ in the following lemma:

\begin{lemma}\label{3Lemma4} Assume that $\alpha>0,\:b\leq 2\sqrt{2}$. Then for $z\in \mathcal{D_{G}}$, it holds that
\begin{itemize}
  \item [(1)] Deformation of $I(\alpha;z;b):$
  \begin{equation}\aligned\nonumber
I(\alpha;z;b)=2\sqrt{2}\cdot\big(-\vartheta_{Y}(\frac{y}{\alpha};x)\big)\cdot\big(\alpha+2\pi{\alpha}^2y+2y\:\frac{\vartheta_{XY}(\frac{y}{\alpha};x)}{\vartheta_{Y}(\frac{y}{\alpha};x)}\big)\\
+b  {e}^{-\pi\alpha y}\vartheta_{Y}(\frac{y}{2 \alpha};x)\cdot\big( \alpha+4\pi{\alpha}^2 y+y\: \frac{\vartheta_{XY}(\frac{y}{2\alpha};x)}{\vartheta_{Y}(\frac{y}{2\alpha};x)}\big);
\endaligned\end{equation}
    \item [(2)] Upper bound for $\left|\mathcal{E}(\alpha;z;b)\right|:$
    \begin{equation}\aligned\nonumber
&\left|\mathcal{E}(\alpha;z;b)\right|\leq2\sqrt{2}\big(-\vartheta_{Y}(\frac{y}{\alpha};x)\big)\Big(\alpha\underset{n=2}{\overset{\infty}{\sum}}n {e}^{-\pi\alpha y (n^2-1)}\left|\frac{\vartheta_{Y}(\frac{y}{\alpha};nx)}{\vartheta_{Y}(\frac{y}{\alpha};x)}\right| \\
      &\:\:\:\:\:\:\:\:+2\pi \alpha^2y\underset{n=2}{\overset{\infty}{\sum}} n^3e^{-\pi\alpha y (n^2-1)}\left|\frac{\vartheta_{Y}(\frac{y}{\alpha};nx)}{\vartheta_{Y}(\frac{y}{\alpha};x)}\right|+2y\underset{n=2}{\overset{\infty}{\sum}} n e^{-\pi\alpha y (n^2-1)}\left|\frac{\vartheta_{XY}(\frac{y}{\alpha};nx)}{\vartheta_{Y}(\frac{y}{\alpha};x)}\right|\Big)\\
      &\:\:\:\:\:\:\:\:+2\sqrt{2}\big(-\vartheta_{Y}(\frac{y}{2 \alpha};x)\big)\Big( \alpha \underset{n=2}{\overset{\infty}{\sum}} n {e}^{-\pi\alpha y (2 n^2-1)}\left|\frac{\vartheta_{Y}(\frac{y}{2 \alpha};nx)}{\vartheta_{Y}(\frac{y}{2 \alpha};x)}\right|\\
      &\:\:\:\:\:\:\:\:+4\pi \alpha^2y\underset{n=2}{\overset{\infty}{\sum}} n^{3}e^{-\pi\alpha  y (2{n}^2-1)}\left|\frac{\vartheta_{Y}(\frac{y}{2 \alpha};nx)}{\vartheta_{Y}(\frac{y}{2 \alpha};x)}\right|
+y\underset{n=2}{\overset{\infty}{\sum}}n e^{-\pi\alpha y(2 n^2-1)}\left|\frac{\vartheta_{XY}(\frac{y}{2 \alpha};nx)}{\vartheta_{Y}(\frac{y}{2 \alpha};x)}\right|\Big).
\endaligned\end{equation}
\end{itemize}
\end{lemma}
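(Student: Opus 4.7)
The plan is to handle the two parts separately; both reduce to a direct algebraic rearrangement of \eqref{3eq2} and \eqref{3eq3}, with the one non-trivial input being Remark \ref{2rem1}, which guarantees that $-\vartheta_{Y}(X;x) > 0$ whenever $x \in (0,\tfrac{1}{2})$, so that the prefactors $\vartheta_{Y}(y/\alpha;x)$ and $\vartheta_{Y}(y/(2\alpha);x)$ never vanish on $\mathcal{D}_{\mathcal{G}}$ and may legitimately be pulled out of the expressions as denominators.

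For part (1), I would first regroup \eqref{3eq2} by collecting the four $\vartheta_{Y}$-terms (two with coefficient $\alpha$ and two with coefficient $\alpha^{2}y$) and the two $\vartheta_{XY}$-terms, obtaining
\[
I(\alpha;z;b) = -2\sqrt{2}\,\vartheta_{Y}(\tfrac{y}{\alpha};x)(\alpha + 2\pi\alpha^{2}y) - 4\sqrt{2}\,y\,\vartheta_{XY}(\tfrac{y}{\alpha};x) + b e^{-\pi\alpha y}\vartheta_{Y}(\tfrac{y}{2\alpha};x)(\alpha + 4\pi\alpha^{2}y) + b y e^{-\pi\alpha y}\vartheta_{XY}(\tfrac{y}{2\alpha};x).
\]
Then I would factor $\vartheta_{Y}(y/\alpha;x)$ out of the first two terms, writing the $\vartheta_{XY}$-term as $\vartheta_{Y}(y/\alpha;x) \cdot [\vartheta_{XY}(y/\alpha;x)/\vartheta_{Y}(y/\alpha;x)]$, and similarly factor $\vartheta_{Y}(y/(2\alpha);x)$ out of the last two. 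Writing the common factor $-\vartheta_{Y}(y/\alpha;x)$ on the outside (so as to absorb the overall sign into the bracket) yields exactly the announced identity. The value of $b$ plays no role in this step; it is a pure manipulation.

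For part (2), I would apply the triangle inequality to each of the six sums in \eqref{3eq3}. The three sums whose summands involve $\vartheta_{\ast}(y/\alpha;nx)$ can then be collected and factored through $-\vartheta_{Y}(y/\alpha;x) = |\vartheta_{Y}(y/\alpha;x)|$, and the three sums involving $\vartheta_{\ast}(y/(2\alpha);nx)$ through $-\vartheta_{Y}(y/(2\alpha);x)$; this is precisely what turns the bare $\vartheta_{Y}$ and $\vartheta_{XY}$ in the summands into the quotients appearing in the stated bound. The three $\sqrt{2}$-coefficients from the first group are already $2\sqrt{2}\alpha$, $4\sqrt{2}\pi\alpha^{2}y$, $4\sqrt{2}y$, so extracting the common $2\sqrt{2}$ produces the first bracket with weights $\alpha$, $2\pi\alpha^{2}y$, $2y$. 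For the three $b$-coefficients from the second group, I would invoke the hypothesis $b \leq 2\sqrt{2}$ to replace $b\alpha$, $4\pi b\alpha^{2}y$, $by$ by $2\sqrt{2}\alpha$, $8\sqrt{2}\pi\alpha^{2}y$, $2\sqrt{2}y$ respectively, after which factoring out $2\sqrt{2}$ yields the second bracket with weights $\alpha$, $4\pi\alpha^{2}y$, $y$.

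The main obstacle is purely organizational rather than analytic: one must keep accurate track of which of the six sums in $\mathcal{E}$ is associated with which of the two theta arguments, and apply the inequality $b \leq 2\sqrt{2}$ selectively to exactly the three $b$-dependent terms. Apart from the non-vanishing of the denominators $\vartheta_{Y}(y/\alpha;x)$ and $\vartheta_{Y}(y/(2\alpha);x)$ on $\mathcal{D}_{\mathcal{G}}$ — which is handed to us by Remark \ref{2rem1} together with the strict inequality $0 < x < \tfrac{1}{2}$ defining the fundamental domain in \eqref{Fd1} — no further analytic input is required. The lemma is, in effect, a careful tabulation of the coefficients appearing in \eqref{3eq2}--\eqref{3eq3}, packaged in a form suitable for the estimates of Lemma \ref{2lem4add} to be plugged into the quotients $|\vartheta_{Y}/\vartheta_{Y}|$ and $|\vartheta_{XY}/\vartheta_{Y}|$ in subsequent sections.
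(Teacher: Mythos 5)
Your proposal is correct and follows essentially the same route as the paper, whose proof is just the one-line remark that item (1) follows from \eqref{3eq2} and item (2) from \eqref{3eq3}, Remark \ref{2rem1}, and $b\leq 2\sqrt{2}$; your regrouping, the factoring through $-\vartheta_{Y}(\tfrac{y}{\alpha};x)>0$ and $-\vartheta_{Y}(\tfrac{y}{2\alpha};x)>0$, and the selective use of $b\leq 2\sqrt{2}$ on the three $b$-dependent sums are exactly the intended (and correct) details.
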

\begin{proof}
Item (1) follows by \eqref{3eq2}. Item (2) is derived from \eqref{3eq3}, Remark \ref{2rem1}, and $b\leq 2\sqrt{2}$.
\end{proof}

We will divide the proof of Lemma \ref{lem3.4} into three cases, where {\bf Case A}: $\frac{y}{\alpha}\geq\frac{1}{2}$, {\bf Case B}: $\frac{y}{\alpha}\leq\frac{1}{4}$ and {\bf Case C}: $\frac{y}{\alpha}\in[\frac{1}{4},\frac{1}{2}]$, which will be presented separately in the next three subsections.

\subsection{Case A of Lemma \ref{lem3.4}: $\frac{y}{\alpha}\geq\frac{1}{2}$} In this subsection, we shall prove that

\begin{lemma}\label{3Lemma9} Assume that $\alpha\geq \frac{5}{4},\:\frac{y}{\alpha}\geq\frac{1}{2}\:\:and\:\:b\leq 2\sqrt{2}$, then for $z\in \mathcal{D_{G}}$, it holds that
\begin{itemize}
  \item [(1)] The lower bound function of $I(\alpha;z;b):$
  \begin{equation}\aligned\nonumber
I(\alpha;z;b)\geq &8\sqrt{2}\pi e^{-\pi\frac{y}{\alpha}}\sin(2\pi x)\big(1-\mu(\frac{1}{2})\big)\big(\alpha+2\pi{\alpha}^2y-2\pi y\:\frac{1+\nu(\frac{1}{2})}{1+\mu(\frac{1}{2})}\big)\\
      &-8\sqrt{2}\pi e^{-\pi y(\alpha+\frac{1}{2\alpha})}\sin(2\pi x)\big(1+\mu(\frac{1}{4})\big)\big(\alpha+4\pi{\alpha}^2y-\pi y\:\frac{1+\hat{\nu}(\frac{1}{4})}{1+\hat{\mu}(\frac{1}{4})}\big).
\endaligned\end{equation}
    \item [(2)] The upper bound function of $\left|\mathcal{E}(\alpha;z;b)\right|:$
    $$\left|\mathcal{E}(\alpha;z;b)\right|\leq 56\sqrt{2}\pi e^{-\pi \frac{y}{\alpha}}\sin(2\pi x)\cdot 10^{-3}.$$
    \item [(3)] $I(\alpha;z;b)+ \mathcal{E}(\alpha;z;b)\geq \frac{2 \sqrt{2}}{25}\pi e^{-\pi \frac{y}{\alpha}}\sin(2\pi x)>0.$
        \end{itemize}
\end{lemma}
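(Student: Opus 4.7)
The plan is to plug the estimates of Lemma \ref{2lem4} and Lemma \ref{2lem4add} into the two structural identities of Lemma \ref{3Lemma4}, exploiting that under the Case A hypothesis $y/\alpha\geq 1/2$ (hence also $y/(2\alpha)\geq 1/4>1/5$) every appeal to parts (1), (3), (5) of Lemma \ref{2lem4add} is legal. I will treat the three items in the order stated, since (3) is a numerical consequence of (1) and (2).

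For item (1), I start from the deformation in Lemma \ref{3Lemma4}(1). The first summand is positive: Lemma \ref{2lem4}(1) gives the lower bound $-\vartheta_Y(y/\alpha;x)\geq 4\pi e^{-\pi y/\alpha}(1-\mu(y/\alpha))\sin(2\pi x)$, and Lemma \ref{2lem4add}(3) gives $\vartheta_{XY}/\vartheta_Y\geq -\pi(1+\nu(y/\alpha))/(1+\mu(y/\alpha))$, so the bracketed factor is at least $\alpha+2\pi\alpha^2y-2\pi y(1+\nu(y/\alpha))/(1+\mu(y/\alpha))$. A brief derivative check (using that $\nu'-\mu'<0$ and that the cross term $\mu\nu'-\nu\mu'$ is essentially zero at leading $n=2$ order) shows $X\mapsto(1+\nu(X))/(1+\mu(X))$ is monotonically decreasing, so one can safely substitute $X=1/2$; together with $\mu(y/\alpha)\leq\mu(1/2)$ this yields the first summand of the stated bound. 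The second summand is negative because $\vartheta_Y(y/(2\alpha);x)<0$ while its bracket is positive ($4\pi\alpha^2\gg\pi$ for $\alpha\geq 5/4$), so we bound its magnitude from above by combining $b\leq 2\sqrt{2}$, the upper estimate $-\vartheta_Y\leq 4\pi e^{-\pi y/(2\alpha)}(1+\mu(y/(2\alpha)))\sin(2\pi x)$, and the upper half of Lemma \ref{2lem4add}(3). The analogous monotonicity statement here is that $(1+\hat\nu(X))/(1+\hat\mu(X))$ is increasing on $X\geq 1/4$ (it rises from a negative value at $1/4$ to $1$ at infinity), which justifies replacing $y/(2\alpha)$ by $1/4$ in an upper bound on the bracket; collecting the factors produces the stated negative summand with prefactor $e^{-\pi y(\alpha+1/(2\alpha))}$.

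For item (2), I start from Lemma \ref{3Lemma4}(2) and handle each of the six sums by pulling out $-\vartheta_Y(y/\alpha;x)$ or $-\vartheta_Y(y/(2\alpha);x)$ and bounding the quotients $|\vartheta_Y(\cdot;nx)/\vartheta_Y(\cdot;x)|$ and $|\vartheta_{XY}(\cdot;nx)/\vartheta_Y(\cdot;x)|$ by $n(1+\mu(X))/(1-\mu(X))$ and $n\pi(1+\nu(X))/(1-\mu(X))$ via Lemma \ref{2lem4add}(1) and (5). The remaining factors $\alpha$, $\alpha^2 y$, and $y$ are carried along together with the rapidly decaying tails $\sum_{n\geq 2}n^k e^{-\pi\alpha y(n^2-1)}$ and $\sum_{n\geq 2}n^k e^{-\pi\alpha y(2n^2-1)}$. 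Since $\alpha\geq 5/4$ and $z\in\mathcal D_{\mathcal G}$ forces $y\geq \sqrt3/2$, each such tail is dominated by its $n=2$ term by a wide margin, and the key ratio $e^{-3\pi\alpha y}/e^{-\pi y/\alpha}=e^{-\pi y(3\alpha-1/\alpha)}$ is at most $e^{-59\pi\sqrt3/40}\sim 3\cdot 10^{-4}$. Using finally Lemma \ref{2lem4}(1) to turn the remaining $-\vartheta_Y$ factors into $4\pi e^{-\pi y/\alpha}(1+\mu(1/2))\sin(2\pi x)$ and an analogous bound at $y/(2\alpha)$, one obtains after elementary bookkeeping the uniform bound $|\mathcal E|\leq 56\sqrt2\pi\cdot 10^{-3}\cdot e^{-\pi y/\alpha}\sin(2\pi x)$.

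For item (3), I add the bounds from (1) and (2) and factor out $8\sqrt2\pi e^{-\pi y/\alpha}\sin(2\pi x)$. The second summand of (1) carries an extra factor $e^{-\pi y(\alpha-1/(2\alpha))}$ relative to the main term; for $\alpha\geq 5/4$ and $y\geq\sqrt3/2$ this is at most $e^{-17\pi\sqrt3/40}$, so the subtracted bracket, although linear in $\alpha^2 y$, is dominated by the main bracket for all parameters in the Case A region. The main obstacle is precisely this last, purely numerical verification: one must show that
\[
(1-\mu(\tfrac12))\bigl[\alpha+2\pi\alpha^2 y-2\pi y\tfrac{1+\nu(1/2)}{1+\mu(1/2)}\bigr]
-e^{-\pi y(\alpha-1/(2\alpha))}(1+\mu(\tfrac14))\bigl[\alpha+4\pi\alpha^2 y-\pi y\tfrac{1+\hat\nu(1/4)}{1+\hat\mu(1/4)}\bigr]-7\cdot 10^{-3}\geq \tfrac{1}{100},
\]
uniformly on $\{\alpha\geq 5/4,\ y\geq\max(\sqrt3/2,\alpha/2)\}$. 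I would handle this by first checking positivity of the left side at the boundary case $\alpha=5/4$, $y=\sqrt3/2$ with the tabulated constants $\mu(1/2)\approx 0.036$, $\nu(1/2)\approx 0.144$, $\mu(1/4)\approx 0.396$, $\hat\mu(1/4)\approx -0.362$, $\hat\nu(1/4)\approx -1.368$, and then showing monotonicity in $y$ (the $\alpha^2 y$ coefficient dominates the exponentially suppressed correction) and in $\alpha$ for fixed $y/\alpha$. This numerical step, combined with the absorption of the exponentially small $|\mathcal E|$, yields the clean lower bound $\frac{2\sqrt2}{25}\pi e^{-\pi y/\alpha}\sin(2\pi x)$.
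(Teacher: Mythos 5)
Your proposal follows essentially the same route as the paper: insert the bounds of Lemma \ref{2lem4} and Lemma \ref{2lem4add} into the deformation and the error bound of Lemma \ref{3Lemma4}, use the monotonicity of $\mu$, $\nu$, $\frac{1+\nu(X)}{1+\mu(X)}$ and $\frac{1+\hat\nu(X)}{1+\hat\mu(X)}$ to freeze the arguments at $X=\frac12$ and $X=\frac14$, and finish with the numerical comparison at the corner $\alpha=\frac54$, $y=\frac{\sqrt3}{2}$. The only difference is cosmetic: you spell out the final numerical verification in item (3) more explicitly than the paper, which simply asserts that items (1) and (2) imply it.
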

\begin{proof}
By Lemmas \ref{3Lemma4}, \ref{2lem4} and \ref{2lem4add}, one has
\begin{equation}\aligned\label{3.2eq1}
I(\alpha;z;b)\geq &8\sqrt{2}\pi e^{-\pi\frac{y}{\alpha}}\sin(2\pi x)\big(1-\mu(\frac{y}{\alpha})\big)\big(\alpha+2\pi{\alpha}^2y-2\pi y\:\frac{1+\nu(\frac{y}{\alpha})}{1+\mu(\frac{y}{\alpha})}\big)\\
 &-8\sqrt{2}\pi \: e^{-\pi y(\alpha+\frac{1}{2\alpha})}\sin(2\pi x)\big(1+\mu(\frac{y}{2\alpha})\big)\big(\alpha+4\pi{\alpha}^2y-\pi y\:\frac{1+\hat{\nu}(\frac{y}{2\alpha})}{1+\hat{\mu}(\frac{y}{2\alpha})}\big).
\endaligned\end{equation}
Notice that $\mu(X)$ and $\nu(X)$ are decreasing as $X\geq\frac{1}{4}$.
Then as $\frac{y}{\alpha}\geq\frac{1}{2}$, one has
\begin{equation}\aligned\label{3.2eq2}
\mu(\frac{y}{\alpha})\leq \mu(\frac{1}{2})=0.0359\cdots,\:\:\:\:\mu(\frac{y}{2\alpha})\leq \mu(\frac{1}{4})=0.3960\cdots.
\endaligned\end{equation}
Notice that for $X\geq\frac{1}{4}$, we have
\begin{equation}\aligned\label{3.2eq3}
\frac{1+\nu(X)}{1+\mu(X)},\;\; \frac{1+\mu(X)}{1-\mu(X)}, \;\;\frac{1+\nu(X)}{1-{\mu}(X)}\;\;\mathrm{\:\:are\: \:decreasing },\:\:\frac{1+\hat{\nu}(X)}{1+\hat{\mu}(X)} \mathrm{\:\:is\: \:increasing}.
\endaligned\end{equation}
Then, there holds that
\begin{equation}\aligned\label{3.2eq4}
-0.5759\cdots=\frac{1+\hat{\nu}(\frac{1}{4})}{1+\hat{\mu}(\frac{1}{4})}\leq \frac{1+\hat{\nu}(\frac{y}{2\alpha})}{1+\hat{\mu}(\frac{y}{2\alpha})},\;\;\;\;
\frac{1+\nu(\frac{y}{\alpha})}{1+\mu(\frac{y}{\alpha})}\leq\frac{1+\nu(\frac{1}{2})}{1+\mu(\frac{1}{2})}=1.1042\cdots.
\endaligned\end{equation}
Therefore, \eqref{3.2eq1}-\eqref{3.2eq4} yield item (1). By Lemmas \ref{3Lemma4} and \ref{2lem4}, one gets
\begin{equation}\aligned\label{3.2eq5}
&\left|\mathcal{E}(\alpha;z;b)\right|\leq 8\sqrt{2}\pi e^{-\pi \frac{y}{\alpha}}\sin(2\pi x)\Big(\big(1+\mu(\frac{y}{2\alpha})\big)\big( \alpha\:\underset{n=2}{\overset{\infty}{\sum}}n^2 e^{-\pi{y}(\alpha(2n^2-1)-\frac{1}{2\alpha})}\cdot\frac{1+\mu(\frac{y}{2\alpha})}{1-\mu(\frac{y}{2\alpha})}\\
&\;\;+4\pi{\alpha}^2 y \underset{n=2}{\overset{\infty}{\sum}}n^4 e^{-\pi{y}(\alpha(2n^2-1)-\frac{1}{2\alpha})}\frac{1+\mu(\frac{y}{2\alpha})}{1-\mu(\frac{y}{2\alpha})}+\pi y \underset{n=2}{\overset{\infty}{\sum}}n^2 e^{-\pi{y}(\alpha(2n^2-1)-\frac{1}{2\alpha})}\frac{1+\nu(\frac{y}{2\alpha})}{1-\mu(\frac{y}{2\alpha})}\big)\\
&\;\;+\big(1+\mu(\frac{y}{\alpha})\big)\cdot\big( \alpha\cdot  \mu(\alpha y)\cdot\frac{1+\mu(\frac{y}{\alpha})}{1-\mu(\frac{y}{\alpha})}
+2\pi{\alpha}^2 y \cdot \nu(\alpha y)\cdot \frac{1+\mu(\frac{y}{\alpha})}{1-\mu(\frac{y}{\alpha})}+2\pi y \cdot\mu(\alpha y)\cdot \frac{1+\nu(\frac{y}{\alpha})}{1-\mu(\frac{y}{\alpha})}\big) \Big).
\endaligned\end{equation}
Note that $z\in \mathcal{D_{G}}$ implies $y>\frac{\sqrt{3}}{2}$. Then for $\alpha\geq \frac{5}{4}$ and $\frac{y}{\alpha}\geq\frac{1}{2}$, by \eqref{3.2eq3}, \eqref{3.2eq5}, along with the monotonicity of $\mu(X)$ and $\nu(X)$, item (2) is deduced. Items (1) and (2) yield item (3).
\end{proof}

\subsection{Case B of Lemma \ref{lem3.4}: $\frac{y}{\alpha}\leq\frac{1}{4}$} In this subsection, we shall prove that

\begin{lemma}\label{3Lemma14} Suppose $\alpha\geq 1,\:\frac{y}{\alpha}\in (0,\frac{1}{4}]\:\:and\:\:b\leq 2\sqrt{2}$, then for $z\in \mathcal{D_{G}}$, it holds that
\begin{itemize}
  \item [(1)] The lower bound function of $I(\alpha;z;b):$
  \begin{equation}\aligned\nonumber
I(\alpha;z;b)\geq &\sin(2\pi x)\big(\frac{\alpha}{y}\big)^{\frac{3}{2}}\big(2\sqrt{2}\pi e^{-\frac{\pi\alpha}{4y}}(\alpha+2\pi{\alpha}^2 y- y\cdot\frac{3(\frac{y}{\alpha})^2+8{\pi}^2 e^{-4\pi}}{(\frac{y}{\alpha})^3-\frac{\pi}{4} e^{-4\pi}}   )\\
    &\;\;-8 e^{-\pi\alpha y}(\alpha+4\pi{\alpha}^2 y+\frac{\pi{\alpha}^2}{y})\big).
\endaligned\end{equation}
    \item [(2)] The upper bound function of $\left|\mathcal{E}(\alpha;z;b)\right|:$
    $$\left|\mathcal{E}(\alpha;z;b)\right|\leq \frac{\sqrt{2}\pi }{25}e^{-\frac{\pi\alpha}{4y}}\sin(2\pi x)\big(\frac{\alpha}{y}\big)^{\frac{3}{2}}.$$
    \item [(3)] $I(\alpha;z;b)+ \mathcal{E}(\alpha;z;b)\geq 2\sqrt{2}\pi e^{-\frac{\pi\alpha}{4y}}\sin(2\pi x)\big(\frac{\alpha}{y}\big)^{\frac{3}{2}}>0.$
        \end{itemize}
\end{lemma}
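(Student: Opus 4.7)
\textbf{Proof plan for Lemma \ref{3Lemma14}.} The strategy mirrors that of Lemma \ref{3Lemma9}, but since now $X:=\frac{y}{\alpha}\leq \frac{1}{4}<\frac{9}{20}<\frac{\pi}{\pi+2}$, we invoke the small-$X$ estimates, namely Lemma \ref{2lem4}(2) and Lemma \ref{2lem4add}(2), (4), (6). Throughout, we use that $z\in\mathcal{D}_{\mathcal{G}}$ forces $y\geq \frac{\sqrt{3}}{2}$, and that the assumption $\frac{y}{\alpha}\leq\frac{1}{4}$ together with $\alpha\geq 1$ gives $\alpha\geq 4y\geq 2\sqrt{3}$ and hence $\alpha y\geq 3$.

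For item (1), start from the deformation in Lemma \ref{3Lemma4}(1). On the first piece $2\sqrt{2}(-\vartheta_{Y}(\frac{y}{\alpha};x))\bigl(\alpha+2\pi\alpha^{2}y+2y\frac{\vartheta_{XY}(\frac{y}{\alpha};x)}{\vartheta_{Y}(\frac{y}{\alpha};x)}\bigr)$, apply the lower bound $-\vartheta_{Y}(\frac{y}{\alpha};x)\geq \underline{\vartheta}_{2}(\frac{y}{\alpha})\sin(2\pi x)=\pi e^{-\pi\alpha/(4y)}(\alpha/y)^{3/2}\sin(2\pi x)$ from Lemma \ref{2lem4}(2), and the lower bound from Lemma \ref{2lem4add}(4) for $\vartheta_{XY}/\vartheta_{Y}$, which is negative. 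Monotonicity in $X$ of the resulting expression, combined with the crude substitution $e^{-\pi\alpha/y}\leq e^{-4\pi}$ and $(y/\alpha)^{2}\leq \frac{1}{16}$ valid for $\frac{y}{\alpha}\leq\frac14$, yields the form $-y\cdot\frac{3(y/\alpha)^{2}+8\pi^{2}e^{-4\pi}}{(y/\alpha)^{3}-\frac{\pi}{4}e^{-4\pi}}$ in the first bracket. On the second piece $be^{-\pi\alpha y}\vartheta_{Y}(\frac{y}{2\alpha};x)\bigl(\alpha+4\pi\alpha^{2}y+y\frac{\vartheta_{XY}(\frac{y}{2\alpha};x)}{\vartheta_{Y}(\frac{y}{2\alpha};x)}\bigr)$, which is negative, upper bound its absolute value using $b\leq 2\sqrt{2}$, the upper bound $-\vartheta_{Y}(\frac{y}{2\alpha};x)\leq \overline{\vartheta}_{2}(\frac{y}{2\alpha})\sin(2\pi x)=(2\alpha/y)^{3/2}\sin(2\pi x)$, and the upper bound $\vartheta_{XY}/\vartheta_{Y}\leq \pi/(4X^{2})$ from Lemma \ref{2lem4add}(4); this produces the term $-8e^{-\pi\alpha y}(\alpha+4\pi\alpha^{2}y+\pi\alpha^{2}/y)$ in the stated lower bound.

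For item (2), start from the bound in Lemma \ref{3Lemma4}(2). Use Lemma \ref{2lem4add}(2) to estimate $\bigl|\vartheta_{Y}(X;nx)/\vartheta_{Y}(X;x)\bigr|\leq \frac{n}{\pi}e^{\pi/(4X)}$ and Lemma \ref{2lem4add}(6) to estimate $\bigl|\vartheta_{XY}(X;nx)/\vartheta_{Y}(X;x)\bigr|\leq \frac{n}{4X^{2}}e^{\pi/(4X)}$, applied at both $X=\frac{y}{\alpha}$ and $X=\frac{y}{2\alpha}$. Pulling out the overall factor $-\vartheta_{Y}(X;x)\leq X^{-3/2}\sin(2\pi x)$ converts everything into multiples of $e^{-\pi\alpha/(4y)}(\alpha/y)^{3/2}\sin(2\pi x)$ once the factors $e^{\pi\alpha/(4y)}$ and $e^{\pi\alpha/(2y)}$ from the ratio bounds are combined with the fast-decaying exponentials $e^{-\pi\alpha y(n^{2}-1)}$, $e^{-\pi\alpha y(2n^{2}-1)}$. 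The tail sums $\sum_{n\geq 2}n^{k}e^{-\pi\alpha y(n^{2}-1)}$ are controlled using $\alpha y\geq 3$ and the monotonicity of the exponential factors in $\alpha y$; a direct numerical estimate then yields the small constant $\frac{\sqrt{2}\pi}{25}$.

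Item (3) follows by subtracting the bound in item (2) from that in item (1). The main obstacle is verifying that the positive contribution $2\sqrt{2}\pi e^{-\pi\alpha/(4y)}(\alpha+2\pi\alpha^{2}y-yR)$ dominates, where $R$ denotes the rational function in $y/\alpha$ appearing in item (1). Numerically $R\approx 12$ at the corner $y/\alpha=\frac{1}{4}$, so one must show $\alpha+2\pi\alpha^{2}y\geq y R+\frac{1}{2\sqrt{2}\pi}$ up to the error budget. The decisive observation is that $\frac{y}{\alpha}\leq\frac{1}{4}$ with $y\geq \frac{\sqrt{3}}{2}$ forces $\alpha\geq 2\sqrt{3}$, so $2\pi\alpha^{2}y\gtrsim \pi\alpha^{3}/2$ grows much faster than the $yR$ correction, and simultaneously $e^{-\pi\alpha y+\pi\alpha/(4y)}$ is exponentially small (the exponent is $\leq -3\pi+\pi=-2\pi$ in the worst case), which kills the subtracted $8e^{-\pi\alpha y}(\alpha+4\pi\alpha^{2}y+\pi\alpha^{2}/y)$ term. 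A short calculus argument along these lines upgrades the stated lower bound to the clean form $2\sqrt{2}\pi e^{-\pi\alpha/(4y)}\sin(2\pi x)(\alpha/y)^{3/2}$, completing item (3).
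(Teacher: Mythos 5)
Your proposal follows essentially the same route as the paper's proof: the same decomposition via Lemma \ref{3Lemma4}, the same small-$X$ estimates (Lemma \ref{2lem4}(2) and Lemma \ref{2lem4add}(2),(4),(6)) applied at $X=\frac{y}{\alpha}$ and $X=\frac{y}{2\alpha}$, the same crude substitutions $e^{-\pi\alpha/y}\leq e^{-4\pi}$ and $(y/\alpha)^2 e^{-\pi\alpha/y}\leq\frac{1}{16}e^{-4\pi}$ for item (1), and the same exponential-decay bookkeeping (using $\alpha y\geq 3$, $\alpha\geq 2\sqrt{3}$) for items (2) and (3). The numerical checks you supply (e.g.\ $R\approx 12$ at $y/\alpha=\tfrac14$ and the worst-case exponent $-2\pi$) are consistent with, and somewhat more explicit than, the paper's own terse verification.
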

\begin{proof}
(1). By Lemmas \ref{3Lemma4}, \ref{2lem4} and \ref{2lem4add}, one has
\begin{equation}\aligned\nonumber
I(\alpha;z;b)\geq &\sin(2\pi x)\big(\frac{\alpha}{y}\big)^{\frac{3}{2}}\Big(2\sqrt{2}\pi e^{-\frac{\pi\alpha}{4y}}\big(\alpha+2\pi{\alpha}^2 y- y\:\frac{3(\frac{y}{\alpha})^2+8{\pi}^2 e^{-\frac{\pi\alpha}{y}}}{(\frac{y}{\alpha})^3-4\pi(\frac{y}{\alpha})^2 e^{-\frac{\pi\alpha}{y}}}  \big)\\
&\;\;\;\;\;\;\;-8 e^{-\pi\alpha y}\big(\alpha+4\pi{\alpha}^2 y+\frac{\pi{\alpha}^2}{y}\big)\Big).
\endaligned\end{equation}
 Note that $\frac{\alpha}{y}\geq4$, then $e^{-\frac{\pi\alpha}{y}}\leq e^{-4\pi}$ and $(\frac{y}{\alpha})^2 e^{-\frac{\pi\alpha}{y}}\leq \frac{1}{16}e^{-4\pi}$. Thus, (1) is obtained.

 (2). Notice that $z\in \mathcal{D_{G}}$ implies $y>\frac{\sqrt{3}}{2}$. By Lemmas \ref{3Lemma4} and \ref{2lem4}-\ref{2lem4add}, one has
\begin{equation}\aligned\label{3lemma16eq}
\left|\mathcal{E}(\alpha;z;b)\right|\leq &2\sqrt{2}\pi e^{-\frac{\pi\alpha}{4y}}\sin(2\pi x)\big(\frac{\alpha}{y}\big)^{\frac{3}{2}}\big(
\frac{\alpha}{{\pi}^2} \underset{n=2}{\overset{\infty}{\sum}}n^2 e^{-\pi{\alpha}[y(n^2-1)-\frac{1}{2y}]}+\frac{2}{\pi}{\alpha}^2y \underset{n=2}{\overset{\infty}{\sum}}n^4 e^{-\pi{\alpha}[y(n^2-1)-\frac{1}{2y}]}\\
&+\frac{{\alpha}^2}{2\pi y}\underset{n=2}{\overset{\infty}{\sum}}n^2 e^{-\pi{\alpha}[y(n^2-1)-\frac{1}{2y}]}
+\frac{2\sqrt{2}}{{\pi}^2}\alpha\:\underset{n=2}{\overset{\infty}{\sum}}n^2 e^{-\pi{\alpha}[y(2n^2-1)-\frac{3}{4y}]}\\
&+\frac{8\sqrt{2}}{\pi}{\alpha}^2y\underset{n=2}{\overset{\infty}{\sum}}n^4 e^{-\pi{\alpha}[y(2n^2-1)-\frac{3}{4y}]}+
\frac{2\sqrt{2}}{\pi}\alpha^2 y^{-1}\underset{n=2}{\overset{\infty}{\sum}}n^2 e^{-\pi{\alpha}[y(2n^2-1)-\frac{3}{4y}]}
\big).\\
\endaligned\end{equation}
Given that the terms in \eqref{3lemma16eq} are exponentially decaying, they can be effectively bounded.

(3). Item (3) follows by items (1) and (2).
\end{proof}

\subsection{Case C of Lemma \ref{lem3.4}: $\frac{1}{4}\leq\frac{y}{\alpha}\leq \frac{1}{2}$} In this subsection, we shall prove that

\begin{lemma}\label{3Lemma19} Assume that $\alpha\geq \frac{3}{2},\:\frac{1}{4}\leq\frac{y}{\alpha}\leq \frac{1}{2}$, and $b\leq 2\sqrt{2}$. Then for $z\in \mathcal{D_{G}}$, it holds that
\begin{itemize}
  \item [(1)] Lower bound of $I(\alpha;z;b):$
  \begin{equation}\aligned\nonumber
I(\alpha;z;b)\geq & 8e^{-\pi\frac{y}{\alpha}}\sin(2\pi x)\Big(\sqrt{2}\pi \big(1-\mu(\frac{1}{4})\big)\big(\alpha+2\pi{\alpha}^2 y-2\pi y\cdot\frac{1+\nu(\frac{1}{4})}{1+\mu(\frac{1}{4})}\big)\\
    &-e^{\frac{\pi}{2}}\cdot e^{-\pi \alpha y} {\alpha}^{\frac{5}{2}}y^{-\frac{3}{2}}(1+4\pi\alpha y+\pi\alpha {y}^{-1})\Big).
\endaligned\end{equation}
    \item [(2)] Upper bound of $\left|\mathcal{E}(\alpha;z;b)\right|:$
    $$\left|\mathcal{E}(\alpha;z;b)\right|\leq \frac{6 \sqrt{2}\pi}{125} e^{-\pi \frac{y}{\alpha}}\sin(2\pi x).$$
    \item [(3)] $I(\alpha;z;b)+ \mathcal{E}(\alpha;z;b)\geq \frac{2\sqrt{2}\pi}{25} e^{-\pi\frac{y}{\alpha}}\sin(2\pi x)>0.$
        \end{itemize}
\end{lemma}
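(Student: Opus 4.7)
The plan is to follow the same scheme as Lemmas \ref{3Lemma9} and \ref{3Lemma14}, adapted to the intermediate regime. Here $y/\alpha\in[\tfrac14,\tfrac12]$ falls in the ``moderate $X$'' range where Lemma \ref{2lem4}(1) and Lemma \ref{2lem4add} items (1), (3), (5) apply, while $y/(2\alpha)\in[\tfrac18,\tfrac14]$ falls in the ``small $X$'' range where Lemma \ref{2lem4}(2) and Lemma \ref{2lem4add} items (2), (4), (6) apply. Accordingly, in the deformation of $I(\alpha;z;b)$ from Lemma \ref{3Lemma4}(1) I will estimate the first summand using the first family of bounds and the second summand (with its factor $b\leq 2\sqrt2$) using the second.

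For item (1), I would substitute Lemma \ref{2lem4}(1) to obtain $-\vartheta_Y(y/\alpha;x)\geq 4\pi e^{-\pi y/\alpha}(1-\mu(y/\alpha))\sin(2\pi x)$ and Lemma \ref{2lem4add}(3) to lower bound the first bracket by $\alpha+2\pi\alpha^2 y-2\pi y(1+\nu(y/\alpha))/(1+\mu(y/\alpha))$; the monotonicity in \eqref{3.2eq3} then permits the replacement $y/\alpha\mapsto 1/4$, producing the first line of the claimed estimate. For the second summand of $I$, I would use Lemma \ref{2lem4}(2) to get $|\vartheta_Y(y/(2\alpha);x)|\leq(2\alpha/y)^{3/2}\sin(2\pi x)$ and Lemma \ref{2lem4add}(4) to get $\vartheta_{XY}(y/(2\alpha);x)/\vartheta_Y(y/(2\alpha);x)\leq \pi\alpha^2/y^2$; the bracket is then at most $\alpha(1+4\pi\alpha y+\pi\alpha/y)$, and combining with $b\leq 2\sqrt2$ yields $8e^{-\pi\alpha y}\alpha^{5/2}y^{-3/2}(1+4\pi\alpha y+\pi\alpha/y)\sin(2\pi x)$. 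To match the prefactor $e^{-\pi y/\alpha}\sin(2\pi x)$ of the first summand, I use $e^{-\pi\alpha y}=e^{-\pi y/\alpha}\cdot e^{\pi y/\alpha-\pi\alpha y}\leq e^{-\pi y/\alpha}\cdot e^{\pi/2}e^{-\pi\alpha y}$ (valid because $y/\alpha\leq 1/2$), which is precisely the source of the $e^{\pi/2}$ factor.

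For item (2), start from the six sums in Lemma \ref{3Lemma4}(2). The three sums whose quotients live at $X=y/\alpha$ are controlled by Lemma \ref{2lem4add}(1) and (5), with $\mu(y/\alpha),\nu(y/\alpha)$ again dominated by their values at $1/4$ via \eqref{3.2eq3}; the three sums at $X=y/(2\alpha)$ are controlled by Lemma \ref{2lem4add}(2) and (6), producing explicit $e^{\pi/(4X)}$ factors. Together with the exponential weights $e^{-\pi\alpha y(n^2-1)}$ and $e^{-\pi\alpha y(2n^2-1)}$ and the a priori bound $\alpha y\geq\tfrac{3\sqrt3}{4}$ coming from $\alpha\geq\tfrac32$ and $y\geq\tfrac{\sqrt3}{2}$ in $\mathcal{D}_{\mathcal{G}}$, each tail is dominated by its $n=2$ term up to a convergent geometric factor, and summing the six contributions yields the claimed constant $\tfrac{6\sqrt2\pi}{125}$.

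Item (3) then follows by combining: once $\mu(1/4)$ and $\nu(1/4)$ are evaluated, the positive first summand in (1) grows at least like a positive constant times $\alpha^2 y$; the subtracted second summand carries $e^{-\pi\alpha y}$ and is exponentially small because $\alpha y\geq 3\sqrt3/4$; and the error bound in (2) is an explicit small constant. After factoring out $e^{-\pi y/\alpha}\sin(2\pi x)$, the claim reduces to an elementary numerical inequality uniform in $\alpha\geq\tfrac32$ and $y/\alpha\in[\tfrac14,\tfrac12]$. The main obstacle is bookkeeping rather than conceptual: one must verify that the concrete numerical values of $\mu(1/4),\nu(1/4)$, together with the $e^{\pi/2}e^{-\pi\alpha y}$ correction from item (1) and the $\tfrac{6\sqrt2\pi}{125}$ from item (2), leave a positive residue of at least $\tfrac{2\sqrt2\pi}{25}$ throughout the parameter range, which is a direct but delicate calculation analogous to the ones in Lemmas \ref{3Lemma9} and \ref{3Lemma14}.
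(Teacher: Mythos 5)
Your proposal matches the paper's proof in all essentials: the same deformation from Lemma \ref{3Lemma4}, the large-$X$ bounds (Lemma \ref{2lem4}(1) and Lemma \ref{2lem4add}(3)) together with the monotonicity \eqref{3.2eq3} at $X=\frac{y}{\alpha}\geq\frac{1}{4}$ for the main term, the small-$X$ bounds (Lemma \ref{2lem4}(2) and Lemma \ref{2lem4add}(4)) at $X=\frac{y}{2\alpha}$ for the subtracted term with the factor $e^{\frac{\pi}{2}}$ arising exactly as you describe from $e^{\frac{\pi y}{\alpha}}\leq e^{\frac{\pi}{2}}$, and a numerical conclusion using $1-\mu(\frac{1}{4})$ and $\frac{1+\nu(1/4)}{1+\mu(1/4)}$. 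The only divergence is in item (2), where the paper merely refers back to the Case-A computation while you switch to the small-$X$ estimates of Lemma \ref{2lem4add}(2),(6) for the $\frac{y}{2\alpha}$ sums --- which is in fact the appropriate choice, since $\frac{y}{2\alpha}$ can drop below $\frac{1}{5}$ in this regime.
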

\begin{proof}
(1). For $I(\alpha;z;b)$, as $b\leq 2\sqrt{2}$, by Lemmas \ref{3Lemma4}, \ref{2lem4}, \ref{2lem4add}, one has
\begin{equation}\aligned\nonumber
I(\alpha;z;b)\geq & 8e^{-\pi\frac{y}{\alpha}}\sin(2\pi x)\Big(\sqrt{2}\pi \big(1-\mu(\frac{1}{4})\big)\big(\alpha+2\pi{\alpha}^2 y-2\pi y\cdot\frac{1+\nu(\frac{1}{4})}{1+\mu(\frac{1}{4})}\big)\\
&\;\;-e^{\frac{\pi y}{\alpha}} \cdot  e^{-\pi \alpha y} \cdot{\alpha}^{\frac{5}{2}}y^{-\frac{3}{2}}(1+4\pi\alpha y+\pi\alpha {y}^{-1})\Big).
\endaligned\end{equation}
Note that $\frac{1}{4}\leq\frac{y}{\alpha}\leq\frac{1}{2}$, then one has $e^{\frac{\pi y}{\alpha}}\leq e^{\frac{\pi }{2}}.$ Thus, item (1) is obtained.

(2). The estimate for (2) here is similar to that for (2) in Lemma \ref{3Lemma9}, hence, we omit its details.

(3). $1-\mu(\frac{1}{4})=0.6039\cdots,  \:\:\:\: \frac{1+\nu(\frac{1}{4})}{1+\mu(\frac{1}{4})}=1.9123\cdots.$
Item (3) is deduced by items (1) and (2).
\end{proof}

\section{Minimization on the vertical line $\Gamma_{c}$}

Recalling Theorem \ref{3Thm1}, as $ \alpha \geq \frac{3}{2},\:b\leq 2\sqrt{2}$, we have
\begin{equation}\aligned\nonumber
\underset{z\in \mathbb{H}}{\min}\big(\mathcal{K}(\alpha;z)-b \mathcal{K}(2\alpha;z)\big)
=\underset{z\in \Gamma_{c}}{\min}\big(\mathcal{K}(\alpha;z)-b \mathcal{K}(2\alpha;z)\big),\\
\endaligned\end{equation}
In this section, we aim to establish the following result:

\begin{theorem}\label{4Thm1}  Assume that $ \alpha \geq 2$. Then, for $b\leq 2$, up to the action by the modular group,
\begin{equation}\aligned\nonumber
\argmin_{z\in\Gamma_{c}}\big(\mathcal{K}(\alpha;z)-b \mathcal{K}(2\alpha;z)\big)=e^{i\frac{\pi}{3}}.
\endaligned\end{equation}
\end{theorem}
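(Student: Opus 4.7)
The plan is to reduce Theorem \ref{4Thm1} to a one-variable monotonicity statement on the vertical line. Since every point of $\Gamma_c$ has real part $x=1/2$, it suffices to study
\begin{equation*}
F(y) := \mathcal{K}(\alpha; \tfrac{1}{2}+iy) - b\,\mathcal{K}(2\alpha; \tfrac{1}{2}+iy), \qquad y \geq \tfrac{\sqrt{3}}{2},
\end{equation*}
and show $F'(y) > 0$ for every $y > \sqrt{3}/2$. Since $1/2 + i\sqrt{3}/2 = e^{i\pi/3}$, this forces the minimum to be attained exactly at $e^{i\pi/3}$.

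To obtain a workable expression for $F'(y)$, I would combine Lemma \ref{2lem2} with the exponential expansion of Lemma \ref{3Lemma1} and then specialize to $x = 1/2$. The key simplification is that $\vartheta(X; n/2) = \vartheta(X;0)$ when $n$ is even and $\vartheta(X; 1/2)$ when $n$ is odd, so the double series for $\mathcal{K}$ collapses into an expression involving only the one-dimensional Jacobi theta functions $\vartheta(y/\alpha; 0), \vartheta(y/\alpha; 1/2), \vartheta(y/(2\alpha); 0), \vartheta(y/(2\alpha); 1/2)$ and their derivatives. Differentiating termwise in $y$ and rearranging, I would write $F'(y) = I(\alpha; y; b) + \mathcal{E}(\alpha; y; b)$, where $I$ collects the leading $|n| \leq 1$ contributions and $\mathcal{E}$ collects the tail $|n| \geq 2$, mirroring the decomposition of Lemma \ref{3Lemma3} for the transversal derivative.

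With this in hand, I would bound $I(\alpha; y; b)$ from below by factoring out $-\vartheta_Y(\cdot; 1/2)>0$ (positive by Remark \ref{2rem1}) from each piece and then using the ratio estimates in items (3)--(4) of Lemma \ref{2lem4add} to control $\vartheta_{XY}/\vartheta_Y$. The hypothesis $b \leq 2$ is what enters here: the $b$-weighted $2\alpha$-piece additionally carries an exponential smallness $e^{-\pi\alpha y}$, and the margin between $b$ and $2$ leaves enough slack against the $\alpha$-piece to produce a clean positive lower bound. For the error $\mathcal{E}$, every summand carries a factor $e^{-\pi\alpha y(n^2-1)}$ with $n \geq 2$, hence at least $e^{-3\pi\alpha y}$; under $\alpha \geq 2$ and $y \geq \sqrt{3}/2$ this is already very small, and applying items (1)--(2) and (5)--(6) of Lemma \ref{2lem4add} together with Lemma \ref{2lem4} to bound the remaining theta quotients makes $|\mathcal{E}|$ absorbable into $I$.

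Finally, to apply Lemma \ref{2lem4add} uniformly, I would split the parameter range into $y/\alpha \geq 1/2$, $y/\alpha \in [1/4, 1/2]$, and $y/\alpha \leq 1/4$, mirroring Cases A, B, C of Section 3. The main technical obstacle will be the intermediate regime $y/\alpha \in [1/4, 1/2]$, where neither the direct Fourier expansion of $\vartheta$ nor its Poisson dual \eqref{Poisson} is comfortably exponentially small, and the constants $\mu, \nu, \hat\mu, \hat\nu$ defined in \eqref{duct} must be tracked with the same numerical precision as in Lemma \ref{3Lemma19}. Combining the strict inequality $F'(y) > 0$ for all $y > \sqrt{3}/2$ then yields that $F$ attains its minimum on $\Gamma_c$ uniquely at $y = \sqrt{3}/2$, i.e.\ at $z = e^{i\pi/3}$, proving the theorem.
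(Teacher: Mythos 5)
Your strategy has a genuine gap at its very first step: you propose to prove $F'(y)>0$ for all $y>\frac{\sqrt{3}}{2}$ by bounding a main term $I$ below and an error term $\mathcal{E}$ above, exactly as in the transversal argument of Section~3. But the hexagonal point $e^{i\pi/3}$ is a critical point of \emph{every} $\mathcal{G}$-invariant energy (it is fixed by an elliptic element of order $3$ in $\mathcal{G}$, so the gradient of any invariant $C^1$ function vanishes there); in particular $F'(\frac{\sqrt{3}}{2})=0$ for every $b$, which is precisely the identity from B\'etermin's Proposition~3.4 quoted in Subsection~4.1. Consequently no estimate of the form $I\geq c_1>0$, $|\mathcal{E}|\leq c_2<c_1$ with bounds continuous down to $y=\frac{\sqrt{3}}{2}$ can hold — it would force $F'(\frac{\sqrt{3}}{2})>0$. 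Your decomposition must degenerate as $y\to\frac{\sqrt{3}}{2}^{+}$, and you give no mechanism for extracting the factor that vanishes there. This is the real obstacle, not the intermediate regime $y/\alpha\in[\frac14,\frac12]$ that you flag. The situation differs fundamentally from Section~3, where $\partial_x$ of the energy is bounded away from zero in the interior of $\mathcal{D}_{\mathcal{G}}$.

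The paper circumvents this in two moves that are absent from your proposal. First, it reduces $b\leq 2$ to the single value $b=2$ via
\begin{equation*}
\mathcal{K}(\alpha;z)-b \mathcal{K}(2\alpha;z)=\big(\mathcal{K}(\alpha;z)-2 \mathcal{K}(2\alpha;z)\big)+(2-b)\mathcal{K}(2\alpha;z),
\end{equation*}
using Proposition~\ref{Pro} (Luo--Wei) to handle the nonnegative multiple of $\mathcal{K}(2\alpha;z)$, so only Proposition~\ref{PropK2} remains. Second, for the $b=2$ piece it does \emph{not} prove first-order monotonicity directly: on $[\frac{\sqrt{3}}{2},2]$ it shows positivity of the second-order quantity $\big(\frac{\partial^2}{\partial y^2}+\frac{2}{y}\frac{\partial}{\partial y}\big)(\cdot)=\frac{1}{y^2}\frac{\partial}{\partial y}\big(y^2\frac{\partial}{\partial y}(\cdot)\big)$ and integrates from the known zero of $y^2\partial_y(\cdot)$ at $y=\frac{\sqrt{3}}{2}$, which is exactly how the degeneracy at the critical point is absorbed; for $y\geq 2$ it abandons monotonicity altogether and makes a direct comparison of values with the hexagonal point (Proposition~\ref{4prop2}). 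Without either the reduction to $b=2$ or the second-derivative/integration device, your outline cannot be completed as stated.
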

Regarding to $\mathcal{K}(\alpha;z)$, it is known that
\begin{proposition}[Luo-Wei \cite{Luo2023}]\label{Pro}For $\alpha\geq2$, up to the modular group, then
\begin{equation}\aligned\nonumber
\argmin_{z\in\Gamma_{c}}\mathcal{K}(\alpha;z)=e^{i\frac{\pi}{3}}.
\endaligned\end{equation}
\end{proposition}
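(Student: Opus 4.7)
The plan is to parametrize $\Gamma_c$ by $z = \tfrac{1}{2}+iy$ with $y \geq \tfrac{\sqrt{3}}{2}$ and to show that the one-variable function $F_b(y) := \mathcal{K}(\alpha; \tfrac{1}{2}+iy) - b\,\mathcal{K}(2\alpha; \tfrac{1}{2}+iy)$ is strictly increasing on $(\tfrac{\sqrt{3}}{2}, +\infty)$ under $\alpha \geq 2$ and $b \leq 2$; since $e^{i\pi/3}$ corresponds to $y = \tfrac{\sqrt{3}}{2}$, this yields the claim. The first move is to reduce to the threshold $b = 2$. Applying Proposition \ref{Pro} with the parameter $2\alpha$ (admissible because $2\alpha \geq 4 \geq 2$), the map $y \mapsto \mathcal{K}(2\alpha; \tfrac{1}{2}+iy)$ attains its unique minimum on $\Gamma_c$ at $y = \tfrac{\sqrt{3}}{2}$, and hence $\tfrac{\partial}{\partial y}\mathcal{K}(2\alpha; \tfrac{1}{2}+iy) > 0$ for $y > \tfrac{\sqrt{3}}{2}$. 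Writing
\[
F_b'(y) = F_2'(y) + (2-b)\,\tfrac{\partial}{\partial y}\mathcal{K}(2\alpha; \tfrac{1}{2}+iy),
\]
it then suffices to prove $F_2'(y) \geq 0$ for $y > \tfrac{\sqrt{3}}{2}$ in order to conclude $F_b'(y) > 0$ for every $b \leq 2$.

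To analyze the critical case $b = 2$, I would combine Lemmas \ref{2lem2} and \ref{3Lemma1} on $\Gamma_c$ to rewrite $F_2$ as a Jacobi theta series in the variables $X_1 = y/\alpha$ and $X_2 = y/(2\alpha)$ evaluated at half-integer phases $Y = n/2$, in the spirit of Lemma \ref{3Lemma2}. Differentiating in $y$ and isolating the $|n|=1$ contribution from the $|n|\geq 2$ tail yields a decomposition $F_2'(y) = I_c(\alpha; y) + \mathcal{E}_c(\alpha; y)$, parallel to Lemma \ref{3Lemma3}, with $I_c$ the leading term coming from the nearest lattice shell and $\mathcal{E}_c$ a geometrically decaying remainder.

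I would then bound $I_c$ from below and $|\mathcal{E}_c|$ from above via the quotient estimates of Lemmas \ref{2lem4} and \ref{2lem4add}, splitting by the size of $X_1$ into the three regimes $X_1 \geq \tfrac{1}{2}$, $X_1 \in [\tfrac{1}{4}, \tfrac{1}{2}]$, and $X_1 \leq \tfrac{1}{4}$, mirroring Lemmas \ref{3Lemma9}, \ref{3Lemma19} and \ref{3Lemma14}. The sharper constant $b = 2$, compared with $b \leq 2\sqrt{2}$ used in Section 3, is used to gain a decisive numerical factor in the $e^{-\pi\alpha y}$ term, which together with $\alpha \geq 2$ makes that term strictly dominated by the leading $e^{-\pi y/\alpha}$ piece for every $y > \tfrac{\sqrt{3}}{2}$. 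I expect the hardest case to be the intermediate regime $X_1 \in [\tfrac{1}{4}, \tfrac{1}{2}]$ close to $y = \tfrac{\sqrt{3}}{2}$, where both exponential weights are comparable and the leading theta-derivatives partially cancel. Moreover, by the symmetries in Lemma \ref{2lem3} the hexagonal point is a critical point of the full functional on $\mathbb{H}$, so $F_b'(\tfrac{\sqrt{3}}{2}) = 0$; this precludes any crude pointwise lower bound at the endpoint and forces the $y$-derivative estimates to retain $O(y - \tfrac{\sqrt{3}}{2})$ control near the hexagonal point before handing the problem off to strict monotonicity on the open interval $(\tfrac{\sqrt{3}}{2}, +\infty)$.
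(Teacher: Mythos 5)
There is a genuine gap here, and it is structural. First, Proposition \ref{Pro} is not proved in this paper at all: it is imported verbatim from Luo--Wei \cite{Luo2023} and used as a black box, so the task would be to reconstruct the argument of \cite{Luo2023} for $\mathcal{K}(\alpha;\cdot)$ alone. Your proposal instead sets out to prove monotonicity of $\mathcal{K}(\alpha;\tfrac12+iy)-b\,\mathcal{K}(2\alpha;\tfrac12+iy)$ for all $b\le 2$, which is essentially Theorem \ref{4Thm1} --- a statement the paper \emph{derives from} Proposition \ref{Pro} together with Proposition \ref{PropK2}. Worse, your very first reduction invokes Proposition \ref{Pro} at the parameter $2\alpha$ to conclude that $\tfrac{\partial}{\partial y}\mathcal{K}(2\alpha;\tfrac12+iy)>0$. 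That is circular: you assume the statement to be proved (for the family of parameters $2\alpha\ge 4$, with no independent base case and no limiting argument to anchor the recursion $\alpha\mapsto 2\alpha$), and the specialization $b=0$ that would recover Proposition \ref{Pro} still needs exactly this assumed monotonicity. In addition, the inference ``unique minimizer at $y=\tfrac{\sqrt3}{2}$, hence $\tfrac{\partial}{\partial y}\mathcal{K}(2\alpha;\tfrac12+iy)>0$ for all $y>\tfrac{\sqrt3}{2}$'' is a non sequitur: a function can attain its minimum at the left endpoint of a ray without being monotone on it.

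The remaining sketch for the $b=2$ piece is closer in spirit to the transversal estimates of Section 3 than to what the paper actually does in Section 4, and it glosses over the delicate point the paper confronts head-on: by B\'etermin's Proposition 3.4 the $y$-derivative of $\mathcal{K}(\alpha;\tfrac12+iy)-2\mathcal{K}(2\alpha;\tfrac12+iy)$ vanishes at $y=\tfrac{\sqrt3}{2}$, so a first-derivative lower bound of the form ``main shell dominates the tail'' cannot be uniformly positive near the hexagonal point. The paper circumvents this by proving the second-order inequality $\big(\partial_y^2+\tfrac{2}{y}\partial_y\big)(\cdots)>0$ on $[\tfrac{\sqrt3}{2},2]$ (Proposition \ref{4prop1}) and then switching to a direct value comparison for $y\ge2$ (Proposition \ref{4prop2}); you acknowledge the degeneracy but do not explain how your regime-splitting of the first derivative would survive it. In short, to prove Proposition \ref{Pro} you need a self-contained analysis of $\mathcal{K}(\alpha;\cdot)$ itself --- for instance an expansion of $\partial_y\mathcal{K}(\alpha;\tfrac12+iy)$ into a dominant shell plus a controlled tail as in \cite{Luo2023} --- and nothing in the proposal supplies that without presupposing the conclusion.
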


  By Proposition \ref{Pro} and the following deformation:
  $$\mathcal{K}(\alpha;z)-b \mathcal{K}(2\alpha;z)=\mathcal{K}(\alpha;z)-2 \mathcal{K}(2\alpha;z)+(2-b)\mathcal{K}(2\alpha;z),$$
  to prove Theorem \ref{4Thm1}, it suffices to prove that
  \begin{proposition}\label{PropK2}  Assume that $ \alpha \geq 2$. Then, up to the action by the modular group,
\begin{equation}\aligned\nonumber
\argmin_{z\in\Gamma_{c}}\big(\mathcal{K}(\alpha;z)-2\mathcal{K}(2\alpha;z)\big)=e^{i\frac{\pi}{3}}.
\endaligned\end{equation}
\end{proposition}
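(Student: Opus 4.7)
Write $z = \tfrac{1}{2} + iy$ for $y \geq \tfrac{\sqrt{3}}{2}$ and set
\[
F(y) := \mathcal{K}(\alpha;\tfrac{1}{2}+iy) - 2\mathcal{K}(2\alpha;\tfrac{1}{2}+iy).
\]
The plan is to show $F'(y) > 0$ for every $y > \tfrac{\sqrt{3}}{2}$ under the hypothesis $\alpha \geq 2$; combined with $F'(\tfrac{\sqrt{3}}{2}) = 0$ this forces $\argmin_{\Gamma_c} F = e^{i\pi/3}$. Vanishing of $F'$ at the endpoint is automatic by symmetry: the element $\gamma(z) := -1/z + 1$ belongs to $\mathcal{G}$ and fixes $e^{i\pi/3}$ with $d\gamma(e^{i\pi/3}) = e^{-2\pi i/3} \ne 1$, so $\mathcal{G}$-invariance of $F$ (Lemma~\ref{2lem3}) forces $\nabla F(e^{i\pi/3}) = 0$, hence in particular $F'(\sqrt{3}/2) = 0$.

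\textbf{Series expansion of $F$ along $\Gamma_c$.} Combining Lemmas~\ref{2lem2} and~\ref{3Lemma1}, I expand $\theta(\alpha;\tfrac{1}{2}+iy)$ as $\sqrt{y/\alpha}\,\sum_{n\in\mathbb{Z}} e^{-\pi\alpha y n^2}\,\vartheta(y/\alpha;\,n/2)$ and apply $-\tfrac{1}{\pi}\partial_\alpha$, and do the same for $2\mathcal{K}(2\alpha;\cdot)$. At $x = \tfrac{1}{2}$ the Jacobi factor $\vartheta(y/\alpha;\,n/2)$ collapses to $\vartheta(y/\alpha;\,0)$ for even $n$ and to $\vartheta(y/\alpha;\,\tfrac{1}{2})$ for odd $n$, so $F(y)$ becomes an explicit series in exponentials times the two Jacobi values at $Y = 0,\tfrac{1}{2}$ (and their $X$-derivatives, once the $\partial_\alpha$ is carried out). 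Differentiating this expression in $y$ yields an identity structurally parallel to Lemma~\ref{3Lemma3}, but with $\partial_y$ in place of $\partial_x$ and $b = 2$ hard-coded.

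\textbf{Main/error decomposition and case analysis.} Isolate the $n = 0, \pm 1$ contributions as a main term $\mathcal{I}(\alpha,y)$ and collect the $|n| \geq 2$ tails as an error term $\mathcal{E}(\alpha,y)$, in the spirit of Lemma~\ref{3Lemma4}. Partition $\{y \geq \sqrt{3}/2,\ \alpha \geq 2\}$ into the same three regimes used in Section~3, namely (A) $y/\alpha \geq \tfrac{1}{2}$, (B) $y/\alpha \leq \tfrac{1}{4}$, (C) $\tfrac{1}{4} \leq y/\alpha \leq \tfrac{1}{2}$. In each regime, the quotient-of-derivatives bounds of Lemma~\ref{2lem4add} -- items (1), (3), (5) for cases (A) and (C), items (2), (4), (6) for case (B) -- yield a computable lower bound for $\mathcal{I}$ and an upper bound for $|\mathcal{E}|$, from which the aim is to conclude $F'(y) = \mathcal{I} + \mathcal{E} > 0$ for $y > \sqrt{3}/2$.

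\textbf{Main obstacle.} In contrast to Proposition~\ref{Thm2}, here $F'(\sqrt{3}/2) = 0$, so the main term $\mathcal{I}$ cannot be bounded below by a uniformly positive constant: it too must vanish at $y = \sqrt{3}/2$, and $\mathcal{I}$ must still dominate $\mathcal{E}$ in sign arbitrarily close to that zero. The delicate step is to extract the common zero from $\mathcal{I}$, effectively factoring a combination like $\vartheta_X(y/\alpha;\tfrac{1}{2}) - c(\alpha,y)\,\vartheta_X(y/(2\alpha);\tfrac{1}{2})$ whose sign change at $y = \sqrt{3}/2$ encodes the hexagonal critical point, and then bounding the remaining factor strictly away from zero on $(\sqrt{3}/2, \infty)$. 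A cleaner workaround is to establish $F''(\sqrt{3}/2) > 0$ directly from the series, use the bulk main/error bounds to produce a positive lower bound $F'(y) \geq c(\alpha) > 0$ on $[\sqrt{3}/2 + \delta, \infty)$, and then patch the two by continuity on $[\sqrt{3}/2, \sqrt{3}/2 + \delta]$; the strengthened hypothesis $\alpha \geq 2$ (rather than $\alpha \geq 3/2$ in Theorem~\ref{3Thm1}) is most likely what makes this critical-point patching feasible.
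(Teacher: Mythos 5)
Your outline correctly reduces the problem to the sign of $F'(y)$ on $\Gamma_c$, and your symmetry argument for $F'(\sqrt{3}/2)=0$ is sound (the paper instead quotes Proposition~3.4 of B\'etermin for the equivalent fact $y^2\partial_y F\big|_{y=\sqrt{3}/2}=0$). You also correctly identify the real difficulty: the main term of any $\mathcal{I}+\mathcal{E}$ decomposition of $F'$ must itself vanish at $y=\sqrt{3}/2$, so the Section~3 template (uniform positive lower bound on the main term, small error) cannot close near the hexagonal point. But your proposed resolution is where the gap lies. ``Establish $F''(\sqrt{3}/2)>0$, get $F'\geq c(\alpha)>0$ on $[\sqrt{3}/2+\delta,\infty)$, and patch by continuity on $[\sqrt{3}/2,\sqrt{3}/2+\delta]$'' is not an argument: positivity of $F''$ at the single point $y=\sqrt{3}/2$ only gives $F'>0$ on some interval $(\sqrt{3}/2,\sqrt{3}/2+\delta')$ with $\delta'$ uncontrolled, and there is no way to guarantee $\delta'\geq\delta$ without a quantitative lower bound on $F''$ (or control of $F'''$) on the whole gap interval. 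That uniform bound is precisely the hard content you would still need to supply, so the ``workaround'' just relabels the obstacle.

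The paper closes this gap differently, and it is worth seeing how. On $[\sqrt{3}/2,2]$ it uses the operator identity $\frac{1}{y^2}\partial_y\bigl(y^2\partial_y\bigr)=\partial_y^2+\frac{2}{y}\partial_y$: since $y^2F'(y)$ vanishes at $y=\sqrt{3}/2$, it suffices to prove the \emph{uniform} inequality $\bigl(\partial_y^2+\frac{2}{y}\partial_y\bigr)F>0$ on all of $[\sqrt{3}/2,2]$ (Lemma~\ref{4lemma1}), and then integrate $\partial_y(y^2F')>0$ from the left endpoint. The quantity $F''+\frac{2}{y}F'$ does \emph{not} degenerate at the hexagonal point, so it admits the kind of main/error lower bound you wanted; moreover the paper estimates it via direct lattice sums $\mathcal{S}_1,\dots,\mathcal{S}_4$ (Lemmas~\ref{lem4.4}--\ref{4lemma10}) rather than the Jacobi-theta expansion and the three regimes A/B/C, which would be awkward here since $y\in[\sqrt{3}/2,2]$ with $\alpha\geq2$ arbitrary puts $y/\alpha$ anywhere in $(0,1]$. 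For $y\geq2$ the paper does not attempt global monotonicity at all: it proves the direct value comparison $F(y)>F(\sqrt{3}/2)$ (Proposition~\ref{4prop2}), which only requires showing that $y^{1/2}M(\alpha;y)$ dominates $(\sqrt{3}/2)^{1/2}M(\alpha;\sqrt{3}/2)$ plus exponentially small errors. If you want to salvage your approach, replace the continuity patch by the $\partial_y(y^2\partial_y)$ device (or any explicit uniform second-order bound on a fixed neighborhood of $\sqrt{3}/2$); as written, the critical step of the proof is missing.
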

This proof of Proposition \ref{PropK2} is based on Propositions \ref{4prop1} and \ref{4prop2} (an illustration the proof can be found in Figure \ref{a}).
\begin{figure}
\centering
 \includegraphics[scale=0.4]{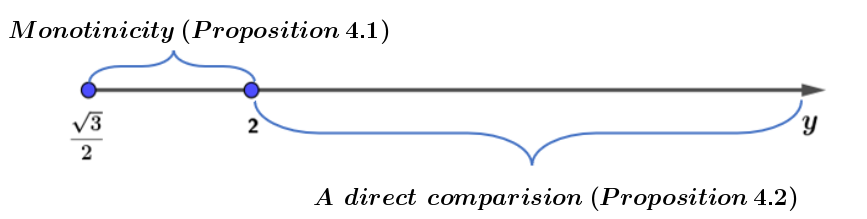}
 \caption{A diagram of the proof of Theorem \ref{4Thm1}.}
 \label{a}
\end{figure}

\begin{proposition}\label{4prop1}Suppose that $\alpha\geq 2$. Then for $y\in[\frac{\sqrt{3}}{2},2]$, it holds that
\begin{equation}\aligned\nonumber
\frac{\partial}{\partial{y}}\big(\mathcal{K}(\alpha;\frac{1}{2}+iy)-2 \mathcal{K}(2\alpha;\frac{1}{2}+iy)\big)\geq 0.
\endaligned\end{equation}
\end{proposition}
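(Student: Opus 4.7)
Write $F(\alpha;y):=\mathcal{K}(\alpha;\tfrac12+iy)-2\mathcal{K}(2\alpha;\tfrac12+iy)$. The plan is to apply the exponential expansion of Lemma~\ref{3Lemma2} at $x=\tfrac12$, where the trigonometric data collapses to $\vartheta(X;n/2)=\vartheta(X;0)$ for even $n$ and $\vartheta(X;1/2)$ for odd $n$. Differentiating term-by-term in $y$ then gives an expansion of $\partial_y F$ analogous to Lemma~\ref{3Lemma3}, of the form $\partial_y F=C(\alpha,y)\cdot(I(\alpha;y)+\mathcal{E}(\alpha;y))$, where $C>0$ is an explicit prefactor, $I$ is the ``main'' $n=0$ contribution, and $\mathcal{E}$ collects the tails over $n\geq 1$.

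\textbf{Differentiating in $y$.} When $\partial_y$ is applied, $y$ enters in three places: the outer prefactor $y^{1/2}\alpha^{-5/2}$ from Lemma~\ref{3Lemma2}, the exponentials $e^{-\pi\alpha y n^{2}}$ and $e^{-2\pi\alpha y n^{2}}$, and the $X$-variable of $\vartheta(y/\alpha;\cdot)$ and $\vartheta(y/(2\alpha);\cdot)$. Each application of $\partial_y$ produces terms with polynomial coefficients in $\alpha,y$ and at most one extra $X$-derivative of $\vartheta$. The main term $I$ is therefore an explicit algebraic combination of $\vartheta(y/\alpha;0)$, $\vartheta(y/\alpha;1/2)$, $\vartheta_X(y/\alpha;0)$, $\vartheta_X(y/\alpha;1/2)$ and the analogous quantities at argument $y/(2\alpha)$, multiplied by polynomials in $\alpha$ and $y$.

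\textbf{Main term vs.\ error.} Since $y\ge\tfrac{\sqrt3}{2}$ and $\alpha\ge 2$, one has $\alpha y\ge\sqrt3$, so the weights $e^{-\pi\alpha y n^{2}}$ and $e^{-2\pi\alpha y n^{2}}$ for $n\ge1$ decay rapidly. Using the quotient bounds of Lemma~\ref{2lem4add}, items (1), (3) and (5), exactly as in Section~3, the error $\mathcal{E}$ can be bounded by a small multiple of a positive lower bound for $I$. Positivity of $\partial_y F$ thus reduces to non-negativity of $I$ on $[2,\infty)\times[\tfrac{\sqrt3}{2},2]$. In the subregime where $y/\alpha$ is moderate (i.e.\ $\alpha$ close to $2$), I would use the direct series representation of $\vartheta$; in the subregime where $\alpha$ is large so that $y/\alpha$ is small, I would apply the Poisson identity \eqref{Poisson} to re-expand $\vartheta(y/\alpha;\cdot)$ and extract the leading behavior, leading to a case split analogous to Cases A--C in Section~3.

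\textbf{Main obstacle.} The delicate point is that the $\mathcal{K}(\alpha;\cdot)$ and $2\mathcal{K}(2\alpha;\cdot)$ contributions to $I$ partially cancel, so verifying $I\ge 0$ cannot be done term-by-term and requires retaining an exact algebraic combination. The specific coupling constant $b=2$ is what makes this cancellation leave a non-negative remainder; for other values of $b$ close to $2$ the statement fails on part of the range, which is consistent with the threshold $b_{c_2}=2\sqrt{2}$ and the hexagonal-to-skinny-rhombic transition in Theorem~\ref{Th1}. I expect the inequality to be tightest near the right endpoint $y=2$ and for $\alpha$ near $2$, so that the bounded range of $y$ is essential, and closing the estimate will require sharp numerical constants in the spirit of the $\mu,\nu,\hat\mu,\hat\nu$ values used in Section~3.
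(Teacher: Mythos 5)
There is a genuine gap, and it is structural rather than technical. The hexagonal point $\tfrac12+i\tfrac{\sqrt3}{2}$ is a critical point of $F(\alpha;y):=\mathcal{K}(\alpha;\tfrac12+iy)-2\mathcal{K}(2\alpha;\tfrac12+iy)$: as the paper recalls (via Proposition 3.4 of B\'etermin \cite{Bet2018}), $y^2\partial_y F$ vanishes identically at $y=\tfrac{\sqrt3}{2}$ for every $\alpha$, hence $\partial_y F\big|_{y=\sqrt3/2}=0$. Consequently the inequality you are trying to prove is an exact \emph{equality} at the left endpoint, not merely tight near $y=2$ as you predict in your last paragraph. This kills the proposed strategy: if you write $\partial_y F=C\cdot(I+\mathcal{E})$ with $C>0$ and then bound $|\mathcal{E}|$ by a small multiple of a positive lower bound for $I$, you obtain \emph{strict} positivity of $\partial_y F$, which is false at $y=\tfrac{\sqrt3}{2}$; equivalently, at and near that point the ``error'' $\mathcal{E}$ is necessarily of the same size as the ``main term'' $I$ and of opposite sign, so no main-term/error-term splitting of $\partial_y F$ itself can close. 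No choice of subcases A--C repairs this, because the obstruction sits at a single interior-of-domain boundary point where the quantity genuinely degenerates.

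The paper circumvents this by going to second order: since $\tfrac{1}{y^2}\partial_y\big(y^2\partial_y F\big)=\big(\partial_y^2+\tfrac{2}{y}\partial_y\big)F$, it suffices to prove that this quantity is \emph{strictly} positive on $[\tfrac{\sqrt3}{2},2]$ (Lemma \ref{4lemma1}) and then integrate $y^2\partial_y F$ up from its known zero at $y=\tfrac{\sqrt3}{2}$. The strictly positive second-order quantity \emph{is} amenable to a dominant-term-plus-small-error analysis, which the paper carries out not with the Jacobi theta expansion of Lemma \ref{3Lemma2} but with the raw lattice sums $\mathcal{S}_1,\dots,\mathcal{S}_4$ of \eqref{4eq1}; on the compact box $\alpha\geq2$, $y\in[\tfrac{\sqrt3}{2},2]$ these converge so fast that explicit first-term bounds (Lemmas \ref{lem4.4}--\ref{4lemma10}) suffice. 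Your broader instincts --- that the cancellation between the $\mathcal{K}(\alpha;\cdot)$ and $2\mathcal{K}(2\alpha;\cdot)$ pieces is the delicate point, and that sharp numerical constants are needed --- are sound, but the missing idea is the reduction to a second-order differential inequality anchored at the critical point $y=\tfrac{\sqrt3}{2}$.
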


For $y\geq2$, we use a direct method as follows:
\begin{proposition}[A direct comparison]\label{4prop2}Suppose that $\alpha\geq 2$. Then for $y\geq 2$, it holds
\begin{equation}\aligned\nonumber
\mathcal{K}(\alpha;\frac{1}{2}+iy)-2 \mathcal{K}(2\alpha;\frac{1}{2}+iy)> \mathcal{K}(\alpha;\frac{1}{2}+i\frac{\sqrt{3}}{2})-2 \mathcal{K}(2\alpha;\frac{1}{2}+i\frac{\sqrt{3}}{2}).
\endaligned\end{equation}
\end{proposition}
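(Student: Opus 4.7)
The plan is a direct estimation of both sides, exploiting the fact that for $y \geq 2$ and $\alpha \geq 2$ the lattice at $z=\tfrac{1}{2}+iy$ is ``thin''. Concretely, in the definition $\mathcal{K}(\alpha;z)=\sum_{(m,n)\neq 0}\tfrac{|mz+n|^2}{y}e^{-\pi\alpha|mz+n|^2/y}$, terms with $m\neq 0$ satisfy $|mz+n|^2/y\geq m^2 y\geq 4$, so each is suppressed by at least $e^{-4\pi\alpha}$ and the whole sum is controlled by its $m=0$ contribution.

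First I would split $\mathcal{K}(\alpha;\tfrac{1}{2}+iy)=A(\alpha,y)+R(\alpha,y)$ with
$$A(\alpha,y)=\sum_{n\neq 0}\frac{n^2}{y}e^{-\pi\alpha n^2/y},$$
and with $R$ collecting the $|m|\geq 1$ contribution. Summing over $n$ in $R$ via a theta-series estimate (along the lines of the bounds of Section~2 applied to $\vartheta(y/\alpha;\cdot)$) yields an explicit bound of the form $|R(\alpha,y)-2R(2\alpha,y)|\leq C_1 e^{-\pi\alpha y}$. For the main term I would lower-bound $A(\alpha,y)-2A(2\alpha,y)=\sum_{n\geq 1}\tfrac{2n^2}{y}e^{-\pi\alpha n^2/y}\bigl(1-2e^{-\pi\alpha n^2/y}\bigr)$. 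In the moderate regime $2\leq y\leq \pi\alpha/\ln 2$ the $n=1$ summand is already positive and dominates; in the flat regime $y\geq \pi\alpha/\ln 2$, Poisson summation converts the $n$-sum into an integral of size $\tfrac{(1-2^{-1/2})\sqrt{y}}{2\pi\alpha^{3/2}}$ plus Gaussian-small corrections, so that the main term actually grows like $\sqrt{y}$.

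For the right-hand side I would use the fact that at $z=e^{i\pi/3}$ the shortest lattice vectors satisfy $|\mathbb{P}|^2=2/\sqrt{3}$ with multiplicity $6$ and the next shell is well-separated at $|\mathbb{P}|^2=6/\sqrt{3}$. A term-by-term estimate together with a geometric tail yields
$$\mathcal{K}(\alpha;e^{i\pi/3})-2\mathcal{K}(2\alpha;e^{i\pi/3})\leq C_2\, e^{-2\pi\alpha/\sqrt{3}}$$
with an explicit $C_2$. The comparison then rests on the elementary gap $\tfrac{2\pi\alpha}{\sqrt{3}}-\tfrac{\pi\alpha}{y}\geq \pi\alpha\bigl(\tfrac{2}{\sqrt{3}}-\tfrac{1}{2}\bigr)>2$ for all $y\geq 2,\alpha\geq 2$, which makes $e^{-\pi\alpha/y}/e^{-2\pi\alpha/\sqrt{3}}$ exponentially large in $\alpha$ and absorbs every polynomial prefactor.

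The hard part will be uniformity in $\alpha$ in the moderate regime, where both sides are exponentially small and one must carefully track $C_1,C_2$ to secure the inequality $\tfrac{2}{y}e^{-\pi\alpha/y}-C_1 e^{-\pi\alpha y}>C_2 e^{-2\pi\alpha/\sqrt{3}}$. A secondary difficulty is the crossover at $y\approx \pi\alpha/\ln 2$, where the single term $(1-2e^{-\pi\alpha/y})$ changes sign; joining the moderate and flat regimes cleanly (either by fixing a large threshold $y_0$ and using the integral lower bound beyond it, or by retaining two terms in the $n$-expansion) will require some bookkeeping but no essentially new idea.
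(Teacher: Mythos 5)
Your proposal is correct and takes essentially the same route as the paper: both isolate the $m=0$ row of the lattice sum as a dominant one-dimensional theta-type main term, treat the $|m|\geq 1$ rows as an error of order $e^{-\pi\alpha y}$, pass between the direct and Poisson-summed representations of that main term according to whether $y/\alpha$ is small or large, and close by playing the decay rate $e^{-\pi\alpha/y}$ off against an $e^{-c\alpha}$ upper bound for the hexagonal value (this is precisely the content of Lemmas \ref{4lemma0}--\ref{4.3lemma3}, with the case split placed at $y=\alpha$). The only blemishes are minor: for $|m|\geq 1$ and $y\geq 2$ one has $|mz+n|^2/y\geq m^2y\geq 2$ rather than $4$, and the crossover degeneracy of the $n=1$ summand that you flag is indeed repaired by retaining the $n=2$ term, since every summand of $A(\alpha,y)-2A(2\alpha,y)$ is nonnegative once $e^{-\pi\alpha/y}\leq\frac{1}{2}$.
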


We provide the proof of Propositions \ref{4prop1} and \ref{4prop2} in Subsections 4.1 and 4.2, respectively.
\subsection{Proof of Proposition \ref{4prop1}}
 By Proposition 3.4 of B\'etermin \cite{Bet2018}:
 $$y^2\frac{\partial}{\partial{y}}\big(\mathcal{K}(\alpha;\frac{1}{2}+iy)-2 \mathcal{K}(2\alpha;\frac{1}{2}+iy)\big)\big|_{y=\frac{\sqrt{3}}{2}}=0,\;\;\mathrm{for\;}\alpha\geq2,$$
  and
  $\frac{1}{y^2}\frac{\partial}{\partial{y}}(y^2\frac{\partial}{\partial{y}})=\frac{\partial^2}{\partial{y}^2}+\frac{2}{y}\frac{\partial}{\partial{y}},$
 to prove Proposition \ref{4prop1}, it suffices to prove Lemma \ref{4lemma1}.
In this subsection, we aim to prove that
\begin{lemma}\label{4lemma1}Assume that $ \alpha \geq 2$. Then for $y\in[\frac{\sqrt{3}}{2},2]$,
\begin{equation}\aligned\nonumber
\big(\frac{\partial^2}{\partial{y}^2}+\frac{2}{y}\frac{\partial}{\partial{y}}\big)\big(\mathcal{K}(\alpha;\frac{1}{2}+iy)-2 \mathcal{K}(2\alpha;\frac{1}{2}+iy)\big)>0.
\endaligned\end{equation}
\end{lemma}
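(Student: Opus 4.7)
The plan is to reproduce, one $y$-derivative further, the ``main term plus error'' architecture of Section 3. Via Lemma 2.2, $\mathcal{K}(\alpha;z) = -\frac{1}{\pi}\partial_\alpha \theta(\alpha;z)$; substituting the expansion of $\theta$ from Lemma 3.1 and specializing to $x = \frac{1}{2}$, the phases $\vartheta(\cdot;nx)$ collapse onto two values, $\vartheta(\cdot;0)$ for even $n$ and $\vartheta(\cdot;\frac{1}{2})$ for odd $n$. After differentiating in $\alpha$ and forming $\mathcal{K}(\alpha;\frac{1}{2}+iy) - 2\mathcal{K}(2\alpha;\frac{1}{2}+iy)$, one obtains a Jacobi-theta series in the single real parameter $y$ whose $y$-dependence is confined to Gaussian factors $e^{-\pi\alpha y n^2}$, $e^{-2\pi\alpha y n^2}$ together with $\vartheta(\frac{y}{\alpha};\cdot)$ and $\vartheta(\frac{y}{2\alpha};\cdot)$ at $Y \in \{0, \frac{1}{2}\}$.

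I would then apply $\mathcal{L} := \partial_y^2 + \frac{2}{y}\partial_y$ termwise. Since $\mathcal{L}$ commutes with summation and acts on an elementary family of $y$-dependencies, each resulting summand is a polynomial in $\alpha, y, n$ times a Gaussian times some $\vartheta$ or $\vartheta_X$ evaluated at $(\frac{y}{\alpha}, Y)$ or $(\frac{y}{2\alpha}, Y)$. One then splits this series into a main piece $\mathcal{I}(\alpha;y)$ carried by the lowest modes ($|n| \leq 1$) and an error $\mathcal{E}(\alpha;y)$ carried by $|n| \geq 2$, so that the goal reduces to showing $\mathcal{I}(\alpha;y) > |\mathcal{E}(\alpha;y)|$ on the strip $\alpha \geq 2$, $y \in [\frac{\sqrt{3}}{2}, 2]$.

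For the error, the quotient estimates of Lemma 2.5 convert ratios of derivatives of $\vartheta$ into explicit constants, while the rapid decay $e^{-\pi\alpha y(n^2 - 1)}$ and $e^{-2\pi\alpha y(n^2 - 1)}$ absorb all $n \geq 2$ contributions into a small multiple of the leading Gaussian. Because $\alpha \geq 2$ and $y \in [\frac{\sqrt{3}}{2}, 2]$ force $y/\alpha \in (0, 1]$, the natural split is $y/\alpha \leq \frac{1}{4}$ versus $y/\alpha \geq \frac{1}{4}$, handled respectively by the Poisson-dual (small-$X$) and direct (large-$X$) estimates of Lemma 2.4, mirroring Cases B and C of Lemma 3.4. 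I expect the third regime $y/\alpha \geq \frac{1}{2}$ to play no role here, since it is already outside the relevant slab.

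The main obstacle will be the sign of $\mathcal{I}(\alpha;y)$. The action of $\mathcal{L}$ introduces prefactors of the form $c_1\alpha^2 y + c_2\alpha + c_3/y$ with mixed signs, and the subtraction $-2\mathcal{K}(2\alpha;\cdot)$ partially cancels the positive contribution from $\mathcal{K}(\alpha;\cdot)$. The cleanest route is to freeze $y$ and verify that $\mathcal{I}(\alpha;y)$ is monotone in $\alpha$, reducing matters to $\alpha = 2$; then $\mathcal{I}(2;y)$ is an explicit one-variable function whose positivity on $[\frac{\sqrt{3}}{2}, 2]$ can be checked by direct inequalities on a compact interval. Should monotonicity in $\alpha$ fail, I would fall back on endpoint/convexity arguments anchored at the identity $y^2 \partial_y(\mathcal{K}(\alpha;\cdot) - 2\mathcal{K}(2\alpha;\cdot))|_{y = \sqrt{3}/2} = 0$ recalled just before the lemma, together with the observation that $\mathcal{L}$ is the radial 3D Laplacian $\frac{1}{y^2}\partial_y(y^2\partial_y)$, so positivity of $\mathcal{L}$ is equivalent to $y^2 \partial_y(\cdot)$ being increasing, which is the truly useful conclusion downstream in Proposition 4.1.
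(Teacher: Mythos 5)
Your proposal diverges from the paper's route and, more importantly, has two concrete problems. First, the tool you name for controlling the series after applying $\mathcal{L}=\partial_y^2+\frac{2}{y}\partial_y$ does not apply here. Lemmas \ref{2lem4} and \ref{2lem4add} bound $\vartheta_Y(X;Y)$ and the quotients $\vartheta_Y(X;kY)/\vartheta_Y(X;Y)$, $\vartheta_{XY}(X;kY)/\vartheta_Y(X;Y)$, all in terms of (or relative to) $\sin(2\pi Y)$; they are designed for the transversal $x$-derivative of Section 3. Once you freeze $x=\tfrac12$, the phases collapse to $Y\in\{0,\tfrac12\}$ where $\sin(2\pi Y)=0$, and the quantities produced by two $y$-derivatives of the expansion in Lemma \ref{3Lemma1} are $\vartheta$, $\vartheta_X$ and $\vartheta_{XX}$ at $Y=0,\tfrac12$ --- none of which is controlled by those lemmas. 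You would have to derive fresh bounds for these (as the paper does for $\vartheta$ and $\vartheta_X$ at $Y=0,\tfrac12$ in Section 4.2 via \eqref{TXY} and \eqref{Poisson}, but not for $\vartheta_{XX}$). Second, and more seriously, your treatment of the main term is a placeholder: ``verify monotonicity in $\alpha$, else fall back on convexity arguments'' does not engage with the actual difficulty, which is that $-2\mathcal{K}(2\alpha;\cdot)$ nearly cancels the leading contribution of $\mathcal{K}(\alpha;\cdot)$. In the paper's proof the surviving margin is only $10^{-1}$ (inequality \eqref{4.1eq2}), obtained from the specific scalar inequality $\frac{\pi}{2}x-(1+\epsilon_b)-4e^{-\pi x}\big(\pi x(1+\epsilon_d)-1\big)\geq 10^{-1}$ for $x=\alpha/y\geq1$, plus the separate verification $B_1\geq 0$, $B_2\leq 5\cdot10^{-3}$. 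Without identifying the analogous explicit inequality in your framework, the positivity claim is unproved.

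For comparison, the paper does not use the theta expansion for this lemma at all. It works directly with the lattice-sum definition \eqref{define}, writes $\mathcal{L}\big(\mathcal{K}(\alpha;\tfrac12+iy)-2\mathcal{K}(2\alpha;\tfrac12+iy)\big)$ as a signed combination of eight sums $\mathcal{S}_i(\alpha;y)$, $\mathcal{S}_i(2\alpha;y)$ ($i=1,\dots,4$, Lemma \ref{4lemma2}), lower-bounds the four positive ones by their dominant lattice points (Lemma \ref{lem4.4}), upper-bounds the four negative ones by dominant points times $(1+\epsilon)$ with numerically certified $\epsilon$'s (Lemmas \ref{4lemma8} and \ref{4lemma10}), and only then reduces to the one-variable inequality above in $x=\alpha/y\geq1$. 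That structure --- an explicit dominant-point comparison with quantified error factors --- is what your sketch is missing; the correct observation that $\mathcal{L}=\frac{1}{y^2}\partial_y(y^2\partial_y)$ and the vanishing of $y^2\partial_y(\cdot)$ at $y=\frac{\sqrt3}{2}$ explains how the lemma is used in Proposition \ref{4prop1}, but does not help prove it.
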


To better illustrate the proof of Lemma \ref{4lemma1}, we denote that
\begin{equation}\aligned\label{4eq1}
\mathcal{S}_{1}(\alpha;y)&:= \underset{n,m}{\sum}n^2e^{-\pi\alpha(yn^2+\frac{(m+\frac{n}{2})^2}{y})},\\
\mathcal{S}_{2}(\alpha;y)&:=\underset{n,m}{\sum}(n^2-\frac{(m+\frac{n}{2})^2}{y^2})^2e^{-\pi\alpha(yn^2+\frac{(m+\frac{n}{2})^2}{y})},\\
\mathcal{S}_{3}(\alpha;y)&:=\underset{n,m}{\sum}n^2(yn^2+\frac{(m+\frac{n}{2})^2}{y})
e^{-\pi\alpha(yn^2+\frac{(m+\frac{n}{2})^2}{y})},\\
\mathcal{S}_{4}(\alpha;y)&:=\underset{n,m}{\sum}(n^2-\frac{(m+\frac{n}{2})^2}{y^2})^2(yn^2+\frac{(m+\frac{n}{2})^2}{y})
e^{-\pi\alpha(yn^2+\frac{(m+\frac{n}{2})^2}{y})}.\\
\endaligned\end{equation}

 Using the expression of $\mathcal{K}(\alpha;z)$ given by \eqref{define}, we give the expression of $(\frac{\partial^2}{\partial{y}^2}+\frac{2}{y}\frac{\partial}{\partial{y}})\big(\mathcal{K}(\alpha;\frac{1}{2}+iy)-2 \mathcal{K}(2\alpha;\frac{1}{2}+iy)\big)$ in the following lemma. The proof of this lemma is straightforward and thus omitted.
\begin{lemma}[The expression of $(\frac{\partial^2}{\partial{y}^2}+\frac{2}{y}\frac{\partial}{\partial{y}})(\mathcal{K}(\alpha;\frac{1}{2}+iy)-2 \mathcal{K}(2\alpha;\frac{1}{2}+iy))$]\label{4lemma2}
\begin{equation}\aligned\nonumber
&\big(\frac{\partial^2}{\partial{y}^2}+\frac{2}{y}\frac{\partial}{\partial{y}}\big)\big(\mathcal{K}(\alpha;\frac{1}{2}+iy)-2 \mathcal{K}(2\alpha;\frac{1}{2}+iy)\big)\\
=&\:\:\frac{2}{y}\mathcal{S}_{1}(\alpha;y)+8\pi\alpha\mathcal{S}_{2}(2\alpha;y)+\frac{8\pi\alpha}{y}\mathcal{S}_{3}(2\alpha;y)+{\pi}^2{\alpha}^2\mathcal{S}_{4}(\alpha;y)\\
&-\frac{4}{y}\mathcal{S}_{1}(2\alpha;y)-2\pi\alpha\mathcal{S}_{2}(\alpha;y)-\frac{2\pi\alpha}{y}\mathcal{S}_{3}(\alpha;y)-8{\pi}^2{\alpha}^2\mathcal{S}_{4}(2\alpha;y).
\endaligned\end{equation}
\end{lemma}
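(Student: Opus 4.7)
The plan is to expand both theta-type sums termwise and apply the operator $\partial_y^2 + \tfrac{2}{y}\partial_y$ to each lattice term, exploiting a clean identity that makes this particular operator simplify the result. For $z=\tfrac{1}{2}+iy$, the quantity $|nz+m|^2/\Im(z)$ equals
\begin{equation*}
A_{n,m}(y):=y n^2+\frac{(m+\tfrac{n}{2})^2}{y},
\end{equation*}
so from the definition \eqref{define},
\begin{equation*}
\mathcal{K}(\alpha;\tfrac{1}{2}+iy)=\sum_{(n,m)\in\mathbb{Z}^2\setminus\{0\}}A_{n,m}(y)\,e^{-\pi\alpha A_{n,m}(y)}.
\end{equation*}
Setting $F(t):=t e^{-\pi\alpha t}$, so that $F'(t)=(1-\pi\alpha t)e^{-\pi\alpha t}$ and $F''(t)=(-2\pi\alpha+\pi^2\alpha^2 t)e^{-\pi\alpha t}$, I would apply the operator $\partial_y^2+\tfrac{2}{y}\partial_y$ to each $F(A_{n,m})$ by the chain rule and then sum.

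The key auxiliary quantity is $B_{n,m}(y):=\partial_y A_{n,m}=n^2-\frac{(m+n/2)^2}{y^2}$. The crucial simplification I would highlight is the identity
\begin{equation*}
\partial_y B_{n,m}+\frac{2}{y}B_{n,m}=\frac{2(m+n/2)^2}{y^3}+\frac{2n^2}{y}-\frac{2(m+n/2)^2}{y^3}=\frac{2n^2}{y},
\end{equation*}
in which the $(m+n/2)^2/y^3$ contributions cancel exactly — this is the reason the stated formula involves only four neat sums rather than six. Using this cancellation,
\begin{equation*}
\bigl(\partial_y^2+\tfrac{2}{y}\partial_y\bigr)F(A_{n,m})=F''(A_{n,m})B_{n,m}^{2}+F'(A_{n,m})\bigl(\partial_y B_{n,m}+\tfrac{2}{y}B_{n,m}\bigr)=F''(A_{n,m})B_{n,m}^{2}+\tfrac{2n^2}{y}F'(A_{n,m}).
\end{equation*}
Substituting the explicit formulas for $F',F''$ gives a sum of four basic templates: $B^{2}e^{-\pi\alpha A}$, $AB^{2}e^{-\pi\alpha A}$, $n^{2}e^{-\pi\alpha A}/y$ and $n^{2}Ae^{-\pi\alpha A}/y$, which match exactly the integrands defining $\mathcal{S}_{2},\mathcal{S}_{4},\mathcal{S}_{1},\mathcal{S}_{3}$ in \eqref{4eq1}.

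Next I would justify termwise differentiation: the exponential factor $e^{-\pi\alpha A_{n,m}(y)}$ dominates any polynomial prefactor on the compact interval $y\in[\tfrac{\sqrt 3}{2},2]$, yielding uniform absolute convergence of all sums and their formal first and second $y$-derivatives. Summing the identity over $(n,m)$ therefore gives
\begin{equation*}
\bigl(\partial_y^2+\tfrac{2}{y}\partial_y\bigr)\mathcal{K}(\alpha;\tfrac{1}{2}+iy)=\tfrac{2}{y}\mathcal{S}_{1}(\alpha;y)-2\pi\alpha\,\mathcal{S}_{2}(\alpha;y)-\tfrac{2\pi\alpha}{y}\mathcal{S}_{3}(\alpha;y)+\pi^{2}\alpha^{2}\mathcal{S}_{4}(\alpha;y).
\end{equation*}
Applying the same identity with $\alpha$ replaced by $2\alpha$ produces the analogous expression for $\mathcal{K}(2\alpha;\tfrac{1}{2}+iy)$, where the replacement $\alpha\mapsto 2\alpha$ multiplies the $\mathcal{S}_{2}$- and $\mathcal{S}_{3}$-coefficients by $2$ and the $\mathcal{S}_{4}$-coefficient by $4$. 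Forming the combination $\mathcal{K}(\alpha;\cdot)-2\mathcal{K}(2\alpha;\cdot)$ and collecting coefficients yields exactly the eight-term formula in Lemma \ref{4lemma2}.

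Since the computation is mechanical, the only real obstacle is bookkeeping: keeping track of signs and the factor-of-two doublings when $\alpha\to 2\alpha$, and verifying that each resulting sum matches one of the four prescribed $\mathcal{S}_i$. The cancellation $\partial_y B+2B/y=2n^2/y$ is the conceptual content; everything else is expansion, summation and careful collection of terms.
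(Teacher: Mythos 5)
Your computation is correct and is precisely the direct chain-rule calculation the paper has in mind when it declares the proof of Lemma \ref{4lemma2} straightforward and omits it: the identity $\partial_y B_{n,m}+\tfrac{2}{y}B_{n,m}=\tfrac{2n^2}{y}$, the matching of the four templates to $\mathcal{S}_1,\dots,\mathcal{S}_4$, and the $\alpha\mapsto 2\alpha$ bookkeeping all check out against \eqref{4eq1} and the stated formula. No gaps.
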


In view of the expression provided in Lemma \ref{4lemma2}, we first provide the lower bound estimates of $\mathcal{S}_{i}(\alpha;y)$ $(i=1,\cdots,4)$.

\begin{lemma}[The lower bound estimates]\label{lem4.4} For $\alpha,y>0$, it holds that
\begin{itemize}
        \item [(2)] $\mathcal{S}_{1}(\alpha;y)\geq 4e^{-\pi\alpha(y+\frac{1}{4y})};$
       \item [(3)] $\mathcal{S}_{2}(2\alpha;y)\geq \frac{2}{y^4}e^{-2\pi\frac{\alpha}{y}}+4(1-\frac{1}{4y^2})^2e^{-2\pi\alpha(y+\frac{1}{4y})};$
    \item [(4)] $\mathcal{S}_{3}(2\alpha;y)\geq  (4y+\frac{1}{y})e^{-2\pi\alpha(y+\frac{1}{4y})};$
    \item [(1)] $\mathcal{S}_{4}(\alpha;y)\geq \frac{2}{y^5}e^{-\frac{\pi\alpha}{y}}+4(1-\frac{1}{4y^2})^2(y+\frac{1}{4y})e^{-\pi\alpha(y+\frac{1}{4y})}.$
        \end{itemize}
\end{lemma}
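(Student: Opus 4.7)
The plan is to prove each of the four lower bounds by simple term extraction, exploiting the manifest non-negativity of every summand. Indeed, for $y>0$ each of the four series has the form $\sum_{(n,m)\in\mathbb{Z}^2} A_{n,m}(y)\,e^{-\pi\beta Q_{n,m}(y)}$ where the quadratic form $Q_{n,m}(y):=yn^2+(m+n/2)^2/y$ is non-negative, and where the prefactor $A_{n,m}(y)$ is $n^2$, $(n^2-(m+n/2)^2/y^2)^2$, $n^2 Q_{n,m}(y)$, or $(n^2-(m+n/2)^2/y^2)^2 Q_{n,m}(y)$, each of which is non-negative. Consequently, discarding any subset of indices can only decrease the sum, and the proof reduces to choosing the right index set.

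For $\mathcal{S}_1(\alpha;y)$ and $\mathcal{S}_3(2\alpha;y)$, the factor $n^2$ kills the $n=0$ row, so I keep only the four indices $(n,m)\in\{(\pm 1,0),(\pm 1,-1)\}$. On these indices $n^2=1$, $(m+n/2)^2=1/4$, and $Q_{n,m}(y)=y+1/(4y)$. Summing the four identical contributions gives $\mathcal{S}_1(\alpha;y)\ge 4e^{-\pi\alpha(y+1/(4y))}$ and $\mathcal{S}_3(2\alpha;y)\ge 4(y+1/(4y))e^{-2\pi\alpha(y+1/(4y))}=(4y+1/y)e^{-2\pi\alpha(y+1/(4y))}$.

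For $\mathcal{S}_2(2\alpha;y)$ and $\mathcal{S}_4(\alpha;y)$, the squared factor $(n^2-(m+n/2)^2/y^2)^2$ no longer vanishes on the $n=0$ row, so I split my selected index set into two clusters. The first cluster is $(n,m)=(0,\pm 1)$, on which $n^2-(m+n/2)^2/y^2=-1/y^2$ and $Q_{n,m}(y)=1/y$, contributing $2/y^4\cdot e^{-2\pi\alpha/y}$ to $\mathcal{S}_2(2\alpha;y)$ and $2/y^5\cdot e^{-\pi\alpha/y}$ to $\mathcal{S}_4(\alpha;y)$. The second cluster is again $(\pm 1,0)$ and $(\pm 1,-1)$, on which $n^2-(m+n/2)^2/y^2=1-1/(4y^2)$ and $Q_{n,m}(y)=y+1/(4y)$, giving the four-fold contributions $4(1-1/(4y^2))^2 e^{-2\pi\alpha(y+1/(4y))}$ and $4(1-1/(4y^2))^2(y+1/(4y))e^{-\pi\alpha(y+1/(4y))}$ respectively. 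Adding the two clusters in each case yields exactly the stated bounds.

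The only "obstacle" is mere bookkeeping: one must verify that the chosen indices are distinct, that the prefactors are correctly evaluated (in particular that $m+n/2=\pm 1/2$ on all four cluster-two indices), and that no sign errors creep into $(n^2-(m+n/2)^2/y^2)^2$ upon squaring. Since every discarded term is non-negative, no hypothesis on $\alpha$ or the range of $y$ is needed, and the lemma follows for all $\alpha,y>0$.
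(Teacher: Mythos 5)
Your approach is exactly the right one, and it is the proof the paper has in mind: Lemma \ref{lem4.4} is stated without proof precisely because it follows by discarding non-negative terms and keeping a handful of explicit lattice points, which is what you do. The cluster $(0,\pm1)$ and the resulting contributions $\frac{2}{y^4}e^{-2\pi\alpha/y}$ and $\frac{2}{y^5}e^{-\pi\alpha/y}$ check out, as do all four final bounds.

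One concrete slip: your second cluster is written as $(n,m)\in\{(\pm1,0),(\pm1,-1)\}$, but this set contains $(-1,-1)$, for which $m+\tfrac{n}{2}=-\tfrac{3}{2}$ and hence $Q_{n,m}(y)=y+\tfrac{9}{4y}$, not $y+\tfrac{1}{4y}$. On that literal index set the four contributions are not identical and the sum is strictly less than $4e^{-\pi\alpha(y+\frac{1}{4y})}$, so the stated bounds would not follow. The four points with $n=\pm1$ and $(m+\tfrac{n}{2})^2=\tfrac14$ are $(1,0)$, $(1,-1)$, $(-1,0)$ and $(-1,1)$; replacing $(-1,-1)$ by $(-1,1)$ makes every step of your argument correct. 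This is the very verification you flagged as the only bookkeeping obstacle, so do carry it out.
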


The following results were derived in Luo-Wei (\cite{LW2022}).

\begin{lemma}[Luo-Wei \cite{LW2022}]\label{4lemma8} For $\alpha\geq2,y\in[\frac{\sqrt{3}}{2},2]$, it holds that
\begin{itemize}
  \item [(1)] $\mathcal{S}_{1}(2\alpha;y)\leq4e^{-2\pi\alpha(y+\frac{1}{4y})}\cdot(1+\epsilon_{a});$
  \item [(2)] $\mathcal{S}_{2}(\alpha;y)\leq\frac{2}{y^4}e^{-\pi\frac{\alpha}{y}}\cdot(1+\epsilon_{b}),$
\end{itemize}
where
\begin{equation}\aligned\label{4lem7eq1}\nonumber
{\epsilon}_{a}:=&2e^{-2\pi\alpha(3y-\frac{1}{4y})}(1+\underset{n=2}{\overset{\infty}{\sum}}n^2e^{-8\pi\alpha y(n^2-1)})\cdot(1+2\underset{n=1}{\overset{\infty}{\sum}}e^{-\pi{n}^2\frac{2\alpha}{y}})+\underset{n=2}{\overset{\infty}{\sum}}e^{-\frac{2\pi\alpha}{y}n\cdot(n-1)}\\
&+\underset{n=2}{\overset{\infty}{\sum}}(2n-1)^{2}e^{-8{\pi}{\alpha}y(n-1)\cdot n}
+\underset{n=2}{\overset{\infty}{\sum}}(2n-1)^{2}e^{-8{\pi}{\alpha}y(n-1)\cdot n}\underset{m=2}{\overset{\infty}{\sum}}e^{-\frac{2\pi\alpha}{y}m\cdot(m-1)},\\
\endaligned\end{equation}
and $\epsilon_{b}:=\epsilon_{b,1}+\epsilon_{b,2}+\epsilon_{b,3}+\epsilon_{b,4}$ and each $\epsilon_{b,i}\:(i=1,2,3,4)$ is expressed as follows
\begin{equation}\aligned\nonumber
\epsilon_{b,1}
:&=2y^4 e^{-\pi\alpha y}\cdot(1+\sum_{n=2}^\infty e^{-\frac{4\pi\alpha}{y}n\cdot(n-1)})\cdot
(1+\sum_{n=2}^\infty (2n-1)^4 e^{-4\pi\alpha y n(n-1)}),\\
\epsilon_{b,2}
:&=\frac{1}{8}e^{-\pi\alpha y}\cdot(1+\sum_{n=2}^\infty (2n-1)^4 e^{-\frac{4\pi\alpha}{y}n(n-1)})
\cdot(1+\sum_{n=2}^\infty e^{-4\pi\alpha yn(n-1)}),\\
\epsilon_{b,3}
:&=16y^4 e^{-\pi \alpha (4y-\frac{1}{y})}\cdot(1+\sum_{n=2}^\infty n^4e^{-4\pi\alpha y(n^2-1)})\cdot(1+2\sum_{n=1}^\infty e^{-\pi\frac{\alpha}{y}n^2}),\\
\epsilon_{b,4}
:&=y^4 e^{-\pi \alpha (4y-\frac{1}{y})}\cdot(1+\sum_{n=2}^\infty e^{-4\pi\alpha y(n^2-1)})\cdot(1+2\sum_{n=1}^\infty \frac{n^4}{y^4}e^{-\pi\frac{\alpha}{y}n^2}).
\endaligned\end{equation}
Numerically, ${\epsilon}_{a}\leq4\cdot 10^{-6},$ and $\epsilon_{b}\leq6\cdot 10^{-3}.$
\end{lemma}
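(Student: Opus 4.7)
Both bounds will be proved by the same strategy: identify the dominant lattice point(s) in each double sum, isolate the resulting main contribution, and organize the remainder into explicit series that match the stated $\epsilon_a$ or $\epsilon_b$.

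For $\mathcal{S}_{1}(2\alpha;y)$, the plan is to split the sum over $(n,m)\in\mathbb{Z}^{2}$ by the parity of $n$. When $n=2k$, the substitution $m\mapsto m-k$ converts $(m+n/2)^{2}/y$ into $m^{2}/y$ and decouples the two variables, so the even-$n$ piece factorizes as $4\bigl(\sum_{k\neq 0}k^{2}e^{-8\pi\alpha k^{2}y}\bigr)\bigl(\sum_{m}e^{-2\pi\alpha m^{2}/y}\bigr)$. When $n=2k+1$, the same shift converts $(m+n/2)^{2}/y$ into $(m+1/2)^{2}/y$, producing $\bigl(\sum_{k}(2k+1)^{2}e^{-2\pi\alpha(2k+1)^{2}y}\bigr)\bigl(\sum_{m}e^{-2\pi\alpha(m+1/2)^{2}/y}\bigr)$. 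The four smallest-exponent terms $k\in\{0,-1\}$ and $m\in\{0,-1\}$ in this odd product contribute exactly $4e^{-2\pi\alpha(y+1/(4y))}$, the claimed main term. Dividing through, the odd piece becomes $4e^{-2\pi\alpha(y+1/(4y))}(1+U)(1+V)$ with $U:=\sum_{n\geq 2}(2n-1)^{2}e^{-8\pi\alpha y n(n-1)}$ and $V:=\sum_{n\geq 2}e^{-2\pi\alpha n(n-1)/y}$; expanding $(1+U)(1+V)=1+U+V+UV$ matches the last three summands of $\epsilon_{a}$, while the rescaled even piece matches the first summand $2e^{-2\pi\alpha(3y-1/(4y))}(\cdots)(\cdots)$.

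For $\mathcal{S}_{2}(\alpha;y)$, the weight does not factorize, so I would first expand $(n^{2}-(m+n/2)^{2}/y^{2})^{2}=n^{4}-2n^{2}(m+n/2)^{2}/y^{2}+(m+n/2)^{4}/y^{4}$ into three subsums. The $n=0$ slice contributes only to the third subsum, giving $\sum_{m\neq 0}(m^{4}/y^{4})e^{-\pi\alpha m^{2}/y}$; its dominant $m=\pm 1$ terms furnish the main contribution $(2/y^{4})e^{-\pi\alpha/y}$, while the $|m|\geq 2$ tail is absorbed into $\epsilon_{b}$. The $n\neq 0$ part is split by parity and reindexed exactly as for $\mathcal{S}_{1}$, so each parity class again decouples into a product of two one-variable series. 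The six resulting cases (three monomials $n^{4}$, $n^{2}(m+n/2)^{2}$, $(m+n/2)^{4}$ times two parities) collapse into the four stated pieces $\epsilon_{b,1},\epsilon_{b,2},\epsilon_{b,3},\epsilon_{b,4}$, each naturally in the product form $(1+\textrm{tail})(1+\textrm{tail})$; the presence of $(2n-1)^{4}$ in $\epsilon_{b,1},\epsilon_{b,2}$ and the prefactor $y^{4}e^{-\pi\alpha(4y-1/y)}$ in $\epsilon_{b,3},\epsilon_{b,4}$ are the combinatorial fingerprints of the odd and even parity classes respectively.

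The numerical bounds $\epsilon_{a}\leq 4\cdot 10^{-6}$ and $\epsilon_{b}\leq 6\cdot 10^{-3}$ would then be verified by noting that on the range $\alpha\geq 2$, $y\in[\sqrt{3}/2,2]$, every exponential factor entering $\epsilon_{a}$ and $\epsilon_{b}$ has the form $e^{-c\pi\alpha}$ with $c>0$ bounded below by an explicit constant depending on $y$. Each inner tail $\sum_{n\geq 2}n^{2k}e^{-c'\pi\alpha n(n-1)}$ is then dominated by a geometric series with tiny common ratio; one evaluates the prefactors at $\alpha=2$ and the worst endpoint of $y$ and sums. The principal obstacle will be bookkeeping: ensuring the six parity-monomial cases for $\mathcal{S}_{2}$ assemble cleanly into exactly the four pieces of $\epsilon_{b}$ without double-counting, and identifying which endpoint of $[\sqrt{3}/2,2]$ maximizes each prefactor when assembling the final numerical estimate.
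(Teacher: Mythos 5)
First, note that the paper does not actually prove this lemma: it is imported verbatim from Luo--Wei \cite{LW2022} and used as a black box. The only internal point of comparison is the proof of the companion Lemma \ref{4lemma10} (the upper bounds for $\mathcal{S}_3$ and $\mathcal{S}_4$), and your strategy is precisely that proof's strategy: substitute $q=2m+n$ so the sum runs over $p\equiv q\pmod 2$, split by parity, shift so each parity class factorizes into a product of two one-variable series, and peel off the dominant lattice points. Your treatment of item (1) is correct and essentially complete: the even class gives $8e^{-8\pi\alpha y}\big(1+\sum_{k\geq2}k^2e^{-8\pi\alpha y(k^2-1)}\big)\big(1+2\sum_{m\geq1}e^{-2\pi\alpha m^2/y}\big)$, whose ratio to $4e^{-2\pi\alpha(y+\frac{1}{4y})}$ is exactly the first summand of $\epsilon_a$, and the odd class gives $4e^{-2\pi\alpha(y+\frac{1}{4y})}(1+U)(1+V)$ with $U,V$ exactly the stated tails, using $(2k-1)^2-1=4k(k-1)$.

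For item (2) there are two genuine gaps. First, your ``six cases collapse into four'' is not an argument: the cross term $-2n^2(m+\frac n2)^2/y^2$ is nonpositive, so the correct (and necessary) step is the inequality $(A-B)^2\le A^2+B^2$ for $A,B\ge0$, which discards the two cross-term cases outright and leaves exactly two monomials times two parities; as written your plan does not say how the negative contributions are handled, and without discarding them you cannot reach a product-of-positive-series form. Second, and more seriously, if you carry out the factorization you describe, the odd-parity $n^4$ block comes out as $2y^4e^{-\pi\alpha(y-\frac{3}{4y})}\big(1+\sum_{n\ge2}(2n-1)^4e^{-4\pi\alpha yn(n-1)}\big)\big(1+\sum_{n\ge2}e^{-\pi\alpha n(n-1)/y}\big)$ relative to the main term $\frac{2}{y^4}e^{-\pi\alpha/y}$; this is \emph{not} the printed $\epsilon_{b,1}$, whose prefactor is $2y^4e^{-\pi\alpha y}$ and whose $q$-tail carries $e^{-4\pi\alpha n(n-1)/y}$, and the discrepancy goes in the wrong direction (your quantity is strictly larger than the printed one, by a factor $e^{3\pi\alpha/(4y)}$ in the leading term). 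The same mismatch occurs for $\epsilon_{b,2}$. So your plan, executed as stated, will not ``match the stated pieces''; you would either have to locate the exact grouping used in \cite{LW2022} (whose $\epsilon_{b,1}$, read backwards, corresponds to an exponent $yp^2+q^2/y$ rather than $yp^2+q^2/(4y)$ on the odd class) or prove your own, larger, error term and re-verify the numerical bound. This reconciliation is the real content of item (2) and is absent from the proposal; the numerical step you outline at the end is fine once the formulas are settled.
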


Next, we provide upper bound of $\mathcal{S}_{j}(\alpha;y), j=3,4$.
\begin{lemma}[Upper bound of $\mathcal{S}_{j}(\alpha;y), j=3,4$]\label{4lemma10} For $\alpha\geq2,y\in[\frac{\sqrt{3}}{2},2]$, it holds that
\begin{itemize}
  \item [(1)] $\mathcal{S}_{3}(\alpha;y)\leq4ye^{-\pi\alpha(y+\frac{1}{4y})}\cdot(1+\epsilon_{c}),$
  \item [(2)] $\mathcal{S}_{4}(2\alpha;y)
\leq \frac{2}{y^5}e^{-\frac{2\pi\alpha}{y}}\cdot(1+\epsilon_{d})+4ye^{-2\pi\alpha(y+\frac{1}{4y})}\cdot(1-\frac{1}{4y^2}-\frac{1}{16y^4}+\epsilon_{e}).$
\end{itemize}
where ${\epsilon}_{c}:=\underset{j=1}{\overset{6}{\sum}}\epsilon_{c,j}$.
Here each $\epsilon_{c,j}\:(j=1,2,\cdots,6)$ is small and expressed by
\begin{equation}\aligned\nonumber
\epsilon_{c,1}:&=\underset{n=2}{\overset{\infty}{\sum}}(2n-1)^4e^{-4{\pi}{\alpha}y(n-1)\cdot n},\:\:\:\:
\epsilon_{c,2}:=\underset{n=2}{\overset{\infty}{\sum}}e^{-\pi\alpha\frac{(n-1)\cdot n}{y}},\:\:\:\:\:
\epsilon_{c,3}:=\epsilon_{c,1}\cdot\epsilon_{c,2},\\
\epsilon_{c,4}:&=\frac{1}{4{y}^2}(1+\underset{n=2}{\overset{\infty}{\sum}}(2n-1)^2e^{-4{\pi}{\alpha}y(n-1)\cdot n})\cdot(1+\underset{n=2}{\overset{\infty}{\sum}}(2n-1)^2e^{-\pi\alpha\frac{(n-1)\cdot n}{y}}),\\
\epsilon_{c,5}:&=8 e^{-{\pi}{\alpha}(3y-\frac{1}{4y})}(1+\underset{n=2}{\overset{\infty}{\sum}}n^4e^{-4{\pi}{\alpha}y(n^2-1)})\cdot(1+2\underset{m=1}{\overset{\infty}{\sum}}e^{-\pi\alpha\frac{m^2}{y}}),\\
\epsilon_{c,6}:&=\frac{4}{{y}^2}e^{-3{\pi}{\alpha}(y+\frac{1}{4y})}(1+\underset{n=2}{\overset{\infty}{\sum}}n^2e^{-4\pi\alpha{y}(n^2-1)})\cdot(1+\underset{m=2}{\overset{\infty}{\sum}}
m^2e^{-\pi\alpha\frac{m^2-1}{y}}).\\
\endaligned\end{equation}
and
\begin{equation}\aligned\nonumber
{\epsilon}_{d}:=&2\underset{n=1}{\overset{\infty}{\sum}}e^{-8{\pi}{\alpha}y{n}^2}+\underset{n=2}{\overset{\infty}{\sum}}n^6e^{-2\pi\alpha\frac{n^2-1}{y}}+
2\underset{n=1}{\overset{\infty}{\sum}}e^{-8{\pi}{\alpha}y{n}^2} \underset{m=2}{\overset{\infty}{\sum}}m^6e^{-2\pi\alpha\frac{m^2-1}{y}}\;\;\;\;\;\;\;\;\;\;\\
&+64{y}^6e^{-2{\pi}{\alpha}(4y-\frac{1}{y})}(1+\underset{n=2}{\overset{\infty}{\sum}}n^{6}e^{-8{\pi}{\alpha}y(n^2-1)})\cdot(1+2\underset{n=1}{\overset{\infty}{\sum}}e^{-\pi{n}^2\frac{2\alpha}{y}}), \endaligned\end{equation}
\begin{equation}\aligned\nonumber
{\epsilon}_{e}:=&\:\:\frac{1}{64{y}^6}(1+\underset{n=2}{\overset{\infty}{\sum}}e^{-8{\pi}{\alpha}y(n-1)\cdot n})(1+\underset{n=2}{\overset{\infty}{\sum}}(2n-1)^{6}e^{-{2\pi}{\alpha}\frac{(n-1)\cdot n}{y}})+\underset{n=2}{\overset{\infty}{\sum}}e^{-2{\pi}{\alpha}\frac{(n-1)\cdot n}{y}}\\
&+\underset{n=2}{\overset{\infty}{\sum}}(2n-1)^{6}e^{-8{\pi}{\alpha}y(n-1)\cdot n}+\underset{n=2}{\overset{\infty}{\sum}}(2n-1)^{6}e^{-8{\pi}{\alpha}y(n-1)\cdot n}\underset{m=2}{\overset{\infty}{\sum}}e^{-2{\pi}{\alpha}\frac{(m-1)\cdot m}{y}}.
\endaligned\end{equation}
Numerically, $\epsilon_{c}\leq\frac{2}{5}$, $\epsilon_{d}\leq 5\cdot10^{-7},$ and $\epsilon_{e}\leq 4\cdot 10^{-2}.$
\end{lemma}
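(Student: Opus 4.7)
The plan is to write each of $\mathcal{S}_3(\alpha;y)$ and $\mathcal{S}_4(2\alpha;y)$ as the exact contribution of the lattice points $(n,m)$ minimizing the quadratic exponent $Q(n,m):=yn^2+(m+\tfrac{n}{2})^2/y$, plus a tail that is then dominated term by term by the explicit series appearing in the definitions of $\epsilon_c$, $\epsilon_d$ and $\epsilon_e$.

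First I would identify the dominant points. Restricted to $n\neq 0$, $Q$ attains its minimum value $y+\tfrac{1}{4y}$ precisely at the four points $(n,m)\in\{(\pm 1,0),(1,-1),(-1,1)\}$; on the $n=0$ line, $Q(0,m)=m^2/y$ is minimized over $m\neq 0$ at $m=\pm 1$. Evaluating the summand of $\mathcal{S}_3(\alpha;y)$ at the four nearest $n\neq 0$ points gives the exact contribution $(4y+\tfrac{1}{y})e^{-\pi\alpha(y+1/(4y))}=4y\cdot(1+\tfrac{1}{4y^2})e^{-\pi\alpha(y+1/(4y))}$; the $4y$ piece yields the leading factor of the bound and the $\tfrac{1}{4y^2}$ feeds into the constant part of $\epsilon_{c,4}$. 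For $\mathcal{S}_4(2\alpha;y)$ the $n=0$, $m=\pm 1$ terms give $\tfrac{2}{y^5}e^{-2\pi\alpha/y}$, matching the first main term, while the four nearest $n\neq 0$ points contribute $4\bigl(1-\tfrac{1}{4y^2}\bigr)^2\bigl(y+\tfrac{1}{4y}\bigr)e^{-2\pi\alpha(y+1/(4y))}$, and the direct expansion
\[
\bigl(1-\tfrac{1}{4y^2}\bigr)^2\bigl(y+\tfrac{1}{4y}\bigr)=y-\tfrac{1}{4y}-\tfrac{1}{16y^3}+\tfrac{1}{64y^5}
\]
shows this equals $4y\bigl(1-\tfrac{1}{4y^2}-\tfrac{1}{16y^4}\bigr)e^{-2\pi\alpha(y+1/(4y))}$ up to the positive leftover $\tfrac{1}{16y^5}e^{-2\pi\alpha(y+1/(4y))}$, which will be absorbed into $\epsilon_e$.

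Next I would organize the remaining lattice points by the parity of $n$. If $n=2n_1$ then $m+n/2\in\mathbb{Z}$, while if $n=2n_1+1$ then $m+n/2\in\mathbb{Z}+\tfrac{1}{2}$; within each parity class the exponential factor $e^{-\pi\alpha Q(n,m)}$ factorizes as a product of a one-dimensional sum in $n_1$ and a (possibly half-shifted) one-dimensional sum in $m$, after the additive coefficient $yn^2+(m+n/2)^2/y$ is split into its two summands. Each resulting one-dimensional tail is a rapidly decreasing geometric-type series, and the weights $(2n-1)^2,(2n-1)^4,(2n-1)^6$ visible in the $\epsilon$'s are exactly the values of $n^2$, $n^4$ and $n^6$ restricted to odd $n$. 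Matching the six shell types, organized by whether the $n$-sum is even or odd and whether the $m$-sum is integer- or half-integer-shifted, with the six summands of $\epsilon_c$, and the analogous pieces with the summands of $\epsilon_d$ and $\epsilon_e$ for $\mathcal{S}_4(2\alpha;y)$, completes the algebraic decomposition.

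Finally, the numerical bounds $\epsilon_c\leq\tfrac{2}{5}$, $\epsilon_d\leq 5\cdot 10^{-7}$, $\epsilon_e\leq 4\cdot 10^{-2}$ follow because, on $\alpha\geq 2$ and $y\in[\tfrac{\sqrt 3}{2},2]$, every exponent appearing in every $\epsilon_{\bullet,j}$ is bounded below by an explicit positive constant, so each infinite series is controlled by its leading $n=2$ term times a small geometric ratio, while each polynomial-in-$y$ prefactor is maximized at an endpoint of $[\tfrac{\sqrt 3}{2},2]$. The main obstacle is the bookkeeping in the second step: the factor $yn^2+(m+n/2)^2/y$ does not split as a product across the $n$- and $m$-sums, so one must decompose it additively and separately process each resulting double sum via the parity decomposition, while keeping the residue $\tfrac{1}{16y^5}e^{-2\pi\alpha(y+1/(4y))}$ visible so that it correctly lands inside $\epsilon_e$. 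Once this partition is fixed, the numerical estimates reduce to routine term-by-term comparisons.
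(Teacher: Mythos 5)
Your proposal is correct and follows essentially the same route as the paper: isolate the minimal-exponent lattice points for the main terms, split the polynomial weight $yn^2+(m+\frac n2)^2/y$ additively, decompose the remaining sum by the parity of $n$ (equivalently, the paper's substitution $q=2m+n$ with $p\equiv q\pmod 2$) so that each piece factorizes exactly into products of one-dimensional exponential sums, and bound the resulting tails numerically on $\alpha\ge 2$, $y\in[\frac{\sqrt3}{2},2]$. Your tracking of the leftover $\frac{1}{16y^5}e^{-2\pi\alpha(y+\frac{1}{4y})}$ into $\epsilon_e$ matches the paper's leading term $\frac{1}{64y^6}$ of $\epsilon_e$ exactly; the only cosmetic difference is that the paper organizes the error into four factored double sums rather than six "shell types," which is bookkeeping, not substance.
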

\begin{proof}
Since the proofs of the two items in Lemma \ref{4lemma10} are similar, we provide the proof for item (1) only to avoid repetition. For simplicity, we denote:
\begin{equation}\aligned\label{qiao}
\mathcal{S}_{3,a}(\alpha;y)&:=\underset{p\equiv q\equiv0(mod2) }{\sum}  y {p}^4 e^{-\pi\alpha(y{p}^2+\frac{q^2}{4y})},\:\:\:\:
\mathcal{S}_{3,b}(\alpha;y):=\underset{p\equiv q\equiv1(mod2) }{\sum}   y{p}^4 e^{-\pi\alpha(y{p}^2+\frac{q^2}{4y})},\\
\mathcal{S}_{3,c}(\alpha;y)&:=\underset{p\equiv q\equiv0(mod2) }{\sum}  \frac{p^2{q}^2}{4y} e^{-\pi\alpha(y{p}^2+\frac{q^2}{4y})},\:\:
\mathcal{S}_{3,d}(\alpha;y):=\underset{p\equiv q\equiv1(mod2) }{\sum}  \frac{p^2{q}^2}{4y} e^{-\pi\alpha(y{p}^2+\frac{q^2}{4y})}.
\endaligned\end{equation}
Then, by \eqref{4eq1}, the sum $\mathcal{S}_{3}(\alpha;y)$ can be divided into the following four parts:
\begin{equation}\aligned\label{moon}
\mathcal{S}_{3}(\alpha;y)=\underset{p\equiv q(mod2) }{\sum} p^2(yp^2+\frac{q^2}{4y}) e^{-\pi\alpha(y{p}^2+\frac{q^2}{4y})}=\mathcal{S}_{3,a}(\alpha;y)+\mathcal{S}_{3,b}(\alpha;y)+\mathcal{S}_{3,c}(\alpha;y)+\mathcal{S}_{3,d}(\alpha;y).
\endaligned\end{equation}
Next, we shall calculate the four parts respectively. Using \eqref{qiao}, one has
\begin{equation}\aligned\label{moon1}
\mathcal{S}_{3,a}(\alpha;y)&=\underset{p=2n,q=2m }{\sum}  y {p}^4 e^{-\pi\alpha(y{p}^2+\frac{q^2}{4y})}
=16y\:\underset{n }{\sum} n^4e^{-4\pi{\alpha}y{n}^2}\underset{m }{\sum}e^{-\pi\alpha\frac{m^2}{y}}\\
&=32y\:e^{-4\pi\alpha{y}}\cdot\big(1+\underset{n=2}{\overset{\infty}{\sum}}n^{4}e^{-4{\pi}{\alpha}y(n^2-1)}  \big)\cdot(1+2\underset{m=1}{\overset{\infty}{\sum}}e^{-\pi\alpha\frac{m^2}{y}})=\:\:4 ye^{-\pi\alpha(y+\frac{1}{4y})}\cdot\epsilon_{c,5},
\endaligned\end{equation}
and
\begin{equation}\aligned\label{moon2}
\mathcal{S}_{3,b}(\alpha;y)&=\underset{p=2n-1,q=2m-1 }{\sum}  y {p}^4 e^{-\pi\alpha(y{p}^2+\frac{q^2}{4y})}=4y\:\underset{n=1 }{\overset{\infty}{\sum}}(2n-1)^4e^{-\pi{\alpha}y(2n-1)^2}\underset{m=1 }{\overset{\infty}{\sum}}e^{-\pi{\alpha}\frac{(2m-1)^2}{4y}}\\
&=4y\:e^{-\pi\alpha(y+\frac{1}{4y})}\big(1+\underset{n=2}{\overset{\infty}{\sum}}(2n-1)^4e^{-4{\pi}{\alpha}y(n-1)\cdot n} \big)\cdot\big(1+\underset{m=2}{\overset{\infty}{\sum}}e^{-\pi\alpha\frac{(m-1)\cdot m}{y}} \big)\\
&=4y\:e^{-\pi\alpha(y+\frac{1}{4y})}\cdot(1+\epsilon_{c,1})\cdot(1+\epsilon_{c,2})=4ye^{-\pi\alpha(y+\frac{1}{4y})}
\cdot(1+\epsilon_{c,1}+\epsilon_{c,2}+\epsilon_{c,3}).
\endaligned\end{equation}
Similarly,
\begin{equation}\aligned\label{moon3}
\mathcal{S}_{3,c}(\alpha;y)&=\underset{p=2n,q=2m }{\sum} \frac{p^{2}q^{2}}{4y} e^{-\pi\alpha(y{p}^2+\frac{q^2}{4y})}=\frac{16}{y}\underset{n=1 }{\overset{\infty}{\sum}}n^2e^{-4\pi{\alpha}y{n}^2}\underset{m=1 }{\overset{\infty}{\sum}}m^2e^{-\pi{\alpha}\frac{m^2}{y}}\\
&=\frac{16}{y}e^{-4\pi\alpha(y+\frac{1}{4y})}\big(1+\underset{n=2}{\overset{\infty}{\sum}}n^2e^{-4\pi\alpha{y}(n^2-1)}\big)\big(1+\underset{m=2}{\overset{\infty}{\sum}}
m^2e^{-\pi\alpha\frac{m^2-1}{y}}\big)\:\:\:\:\:\:\:\:\:\:\:\:\:\:\:\:\:\:\:\:\:\:\:\:\:\:\:\:\:\:\:\:\:\:\:\:\:\:\:\:\:\:\:\:\:\:\:\:\:\:\:\:\:\:\:\:\:\:\:\:\:\:\:\:\:\:\:\\
&=4ye^{-\pi\alpha(y+\frac{1}{4y})}\cdot\epsilon_{c,6},
\endaligned\end{equation}
and
\begin{equation}\aligned\label{moon4}
\mathcal{S}_{3,d}(\alpha;y)&=\underset{p=2n-1,q=2m-1 }{\sum} \frac{p^{2}q^{2}}{4y} e^{-\pi\alpha(y{p}^2+\frac{q^2}{4y})}\\
&=\frac{1}{y}\underset{n=1 }{\overset{\infty}{\sum}}(2n-1)^2e^{-\pi{\alpha}y(2n-1)^2}\underset{m=1 }{\overset{\infty}{\sum}}(2m-1)^{2}e^{-\pi{\alpha}\frac{(2m-1)^2}{4y}}\\
&=\frac{1}{y}e^{-\pi\alpha(y+\frac{1}{4y})}(1+\underset{n=2}{\overset{\infty}{\sum}}(2n-1)^2e^{-4{\pi}{\alpha}y(n-1)\cdot n})\cdot(1+\underset{n=2}{\overset{\infty}{\sum}}(2n-1)^2e^{-\pi\alpha\frac{(n-1)\cdot n}{y}})\:\:\:\:\:\:\:\:\:\:\:\:\:\:\:\:\:\\
&=4 y\cdot e^{-\pi\alpha(y+\frac{1}{4y})}\cdot{\epsilon}_{c,4}.
\endaligned\end{equation}
Thus, \eqref{moon}-\eqref{moon4} together yield item (1).
\end{proof}

With Lemmas \ref{4lemma2}-\ref{4lemma10} established, we are now ready to prove Lemma \ref{4lemma1}.
 \begin{proof}  By Lemmas \ref{4lemma2}-\ref{4lemma10}, one has
\begin{equation}\aligned\label{4.1eq1}
\big(\frac{\partial^2}{\partial{y}^2}+\frac{2}{y}\frac{\partial}{\partial{y}}\big)\big(\mathcal{K}(\alpha;\frac{1}{2}+iy)-2 \mathcal{K}(2\alpha;\frac{1}{2}+iy)\big)\geq \frac{4\pi\alpha}{y^4}e^{-\frac{\pi\alpha}{y}}\big(A(\alpha;y)+B(\alpha;y)\big),
\endaligned\end{equation}
where
\begin{equation}\aligned\nonumber
A(\alpha;y):=&\frac{\pi\alpha}{2y}-(1+\epsilon_{b})-4e^{-\frac{\pi\alpha}{y}}\big(\frac{\pi\alpha}{y}(1+\epsilon_{d})-1\big),\:\:B(\alpha;y):=B_{1}(\alpha;y)-B_{2}(\alpha;y),
\endaligned\end{equation}
and
\begin{equation}\aligned\nonumber
B_{1}(\alpha;y):=&2y^4e^{-\pi\alpha(y-\frac{3}{4y})}\big(\frac{\pi\alpha y}{2}(1-\frac{1}{4y^2})^2(1+\frac{1}{4y^2})+\frac{1}{\pi\alpha y}-1-\epsilon_{c}\big),\\
B_{2}(\alpha;y):=&8y^4e^{-2\pi\alpha(y-\frac{1}{4y})}\big(\pi\alpha y(1+\epsilon_{e} -\frac{1}{4y^2} -\frac{1}{16y^4})+\frac{1}{2\pi\alpha y}(1+\epsilon_{a})-(1-\frac{1}{4y^2})^2-1-\frac{1}{4y^2}\big).
\endaligned\end{equation}
Here,
\begin{equation}\aligned\nonumber
\epsilon_{a}\leq4\cdot 10^{-6},\:\epsilon_{b}\leq6\cdot 10^{-3},\:\epsilon_{c}\leq4\cdot 10^{-1},\:\epsilon_{d}\leq 5\cdot10^{-7},\:\epsilon_{e}\leq 4\cdot 10^{-2}.
\endaligned\end{equation}
Note that $ \alpha \geq 2$ and $y\in[\frac{\sqrt{3}}{2},2]$ implies that $\frac{\alpha}{y}\geq 1$.
Using this observation along with an elementary inequality,
\begin{equation}\aligned\label{4.1eq2}
\frac{\pi}{2}x-(1+\epsilon_{b})-4e^{-\pi x}\big(\pi x(1+\epsilon_{d})-1\big)\geq10^{-1},\:\:\:\:\mathrm{for}\:\:x\geq1,
\endaligned\end{equation}
we obtain
\begin{equation}\aligned\label{4.1eq3}
A(\alpha;y)\geq 10^{-1}.
\endaligned\end{equation}
For $B(\alpha;y)$, given that $\alpha \geq 2, y\in[\frac{\sqrt{3}}{2},2]$, one has
\begin{equation}\aligned\label{4.1eq4}
B_{1}(\alpha;y)\geq0,\:\:B_{2}(\alpha;y)\leq B_{2}(2,\frac{\sqrt{3}}{2})\leq 5\cdot 10^{-3}.
\endaligned\end{equation}
Therefore, by \eqref{4.1eq1} and \eqref{4.1eq4}, we obtain
\begin{equation}\aligned\nonumber
A(\alpha;y)+B(\alpha;y)\geq10^{-1}-5\cdot 10^{-3}>0,
\endaligned\end{equation}
which yields the result.
\end{proof}

\subsection{Proof of Proposition \ref{4prop2}}

We first give the exponential expansion of $\mathcal{K}(\alpha;\frac{1}{2}+iy)-2 \mathcal{K}(2\alpha;\frac{1}{2}+iy)$.
 \begin{lemma}\label{4lemma0}For $\alpha,y>0$, we have the following expression of $(\mathcal{K}(\alpha;\frac{1}{2}+iy)-2 \mathcal{K}(2\alpha;\frac{1}{2}+iy)):$
\begin{equation}\aligned\nonumber
\mathcal{K}(\alpha;\frac{1}{2}+iy)-2 \mathcal{K}(2\alpha;\frac{1}{2}+iy)=&\frac{1}{\pi}{2}^{-\frac{5}{2}}\alpha^{-\frac{5}{2}}y^{\frac{1}{2}}
\Big(\sum_{n\in\mathbb{Z}}2\sqrt{2} \alpha(1+2\pi\alpha y {n}^2) e^{-\pi\alpha y n^2}\vartheta(\frac{y}{\alpha};\frac{n}{2})\\
&+\sum_{n\in\mathbb{Z}} 4\sqrt{2}ye^{-\pi\alpha y n^2}\vartheta_X(\frac{y}{\alpha};\frac{n}{2})- \sum_{n\in\mathbb{Z}}2 ye^{-2\pi\alpha y n^2}\vartheta_X(\frac{y}{2 \alpha};\frac{n}{2})\\
&- \sum_{n\in\mathbb{Z}}2 \alpha(1+4\pi\alpha y{n}^2) e^{-2\pi\alpha  y n^2}\vartheta(\frac{y}{2 \alpha};\frac{n}{2})\Big).\\
\endaligned\end{equation}
\end{lemma}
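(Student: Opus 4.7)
The plan is to derive Lemma~\ref{4lemma0} as a direct specialization of Lemma~\ref{3Lemma2}. In that earlier lemma, the authors have already written down the exponential expansion of $\mathcal{K}(\alpha;z)-b\mathcal{K}(2\alpha;z)$ for arbitrary $z = x+iy \in \mathbb{H}$ and $b > 0$; the present statement is nothing more than the evaluation of that identity at $x = 1/2$ together with $b = 2$. So the first step is to substitute $x = 1/2$ into each $\vartheta(\cdot\,;nx)$ and $\vartheta_X(\cdot\,;nx)$ appearing there, producing $\vartheta(\frac{y}{\alpha};\frac{n}{2})$, $\vartheta(\frac{y}{2\alpha};\frac{n}{2})$, $\vartheta_X(\frac{y}{\alpha};\frac{n}{2})$, and $\vartheta_X(\frac{y}{2\alpha};\frac{n}{2})$ respectively. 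The second step is to set $b = 2$ throughout the six sums, which simply rescales the four $b$-prefactors by the factor $2$.

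The only real algebraic manipulation is a regrouping. After setting $b = 2$, the first and third sums in Lemma~\ref{3Lemma2} share the exponential weight $e^{-\pi\alpha y n^2}$ and the theta factor $\vartheta(\frac{y}{\alpha};\frac{n}{2})$, so I would combine their coefficients $2\sqrt{2}\alpha$ and $4\sqrt{2}\pi\alpha^2 y n^2$ into a single factor $2\sqrt{2}\alpha(1 + 2\pi\alpha y n^2)$. The analogous pairing of the second and fourth sums (both carrying $e^{-2\pi\alpha y n^2}\vartheta(\frac{y}{2\alpha};\frac{n}{2})$) produces the factor $-2\alpha(1 + 4\pi\alpha y n^2)$, where the $4$ arises because the argument of the exponential has doubled. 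The two remaining sums, which involve $\vartheta_X$, carry over verbatim. The global prefactor $\tfrac{1}{\pi}2^{-5/2}\alpha^{-5/2}y^{1/2}$ is preserved untouched throughout.

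Since no new analytic input is required beyond Lemma~\ref{3Lemma2}, the main \emph{obstacle} is purely bookkeeping: I would carefully verify that all six prefactors, namely $2\sqrt{2}\alpha,\; -2\alpha,\; 4\sqrt{2}\pi\alpha^2 y,\; -8\pi\alpha^2 y,\; 4\sqrt{2}y,\; -2y$, pair with the correct signs after the substitution $b = 2$, and that no off-by-one slip occurs in the $n^2$-coefficient being promoted into $(1+2\pi\alpha y n^2)$ versus $(1+4\pi\alpha y n^2)$. This matches the authors' own comment that ``the proof of this lemma is straightforward and thus omitted.'' As an independent sanity check, one could alternatively re-derive the formula from scratch by plugging $z=\frac{1}{2}+iy$ into Lemma~\ref{3Lemma1}, differentiating once in $\alpha$ via Lemma~\ref{2lem2}, and then applying the identical regrouping, but the specialization route above is shorter and requires no new computation.
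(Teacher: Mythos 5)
Your proposal is correct and is essentially the intended derivation: the paper states Lemma \ref{4lemma0} without proof precisely because it is the immediate specialization of Lemma \ref{3Lemma2} at $b=2$ and $x=\tfrac12$, followed by the regrouping of the $\vartheta$-coefficients into $2\sqrt{2}\alpha(1+2\pi\alpha y n^2)$ and $-2\alpha(1+4\pi\alpha y n^2)$ that you describe. All six prefactors you list check out, so there is nothing to add.
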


In view of Lemma \ref{4lemma0}, to better demonstrate the proof of Proposition \ref{4prop2}, we will decompose the expression $\mathcal{K}(\alpha;\frac{1}{2}+iy)-2 \mathcal{K}(2\alpha;\frac{1}{2}+iy)$. We denote that

 \begin{equation}\aligned\label{4eq4}
M_{1}(\alpha;y):=&2\sqrt{2}\alpha\:\vartheta(\frac{y}{\alpha};0)-2\alpha\:\vartheta(\frac{y}{2\alpha};0),\:\:\:\:
M_{2}(\alpha;y):=4\sqrt{2}y\:\vartheta_X(\frac{y}{\alpha};0)-2 y\:\vartheta_X(\frac{y}{2 \alpha};0),\\
M_{3}(\alpha;y):=&4\sqrt{2}\alpha(1+2\pi\alpha y)e^{-\pi\alpha y}\vartheta(\frac{y}{\alpha};\frac{1}{2})-4\alpha(1+4\pi\alpha y)e^{-2\pi\alpha y}\vartheta(\frac{y}{2\alpha};\frac{1}{2}),\\
M_{4}(\alpha;y):=&8\sqrt{2}y e^{-\pi\alpha y} \vartheta_X(\frac{y}{\alpha};\frac{1}{2})-4y e^{-2\pi\alpha y}\vartheta_X(\frac{y}{2 \alpha};\frac{1}{2}),\\
\endaligned\end{equation}
and
\begin{equation}\aligned\label{4eq5}
\mathcal{E}_{1}(\alpha;y):=&4\sqrt{2} \alpha\sum_{n\geq2}(1+2\pi\alpha y {n}^2) e^{-\pi\alpha y n^2}\vartheta(\frac{y}{\alpha};\frac{n}{2}),\:\:\:\:\:\:
\mathcal{E}_{2}(\alpha;y):=8\sqrt{2}y\sum_{n\geq2} e^{-\pi\alpha y n^2}\vartheta_X(\frac{y}{\alpha};\frac{n}{2}),\\
\mathcal{E}_{3}(\alpha;y):=&-4 \alpha \sum_{n\geq2}(1+4\pi\alpha y {n}^2) e^{-2\pi\alpha  y n^2}\vartheta(\frac{y}{2 \alpha};\frac{n}{2}),\:\:\:
\mathcal{E}_{4}(\alpha;y):=-4 y\sum_{n\geq2} e^{-2\pi\alpha y n^2}\vartheta_X(\frac{y}{2 \alpha};\frac{n}{2}).
\endaligned\end{equation}
By \eqref{4eq4} and \eqref{4eq5}, we further denote that
 \begin{equation}\aligned\label{4eq3}
M(\alpha;y):=\underset{i=1}{\overset{4}{\sum}}M_{i}(\alpha;y),\:\:\mathcal{E}(\alpha;y):=\underset{i=1}{\overset{4}{\sum}}\mathcal{E}_{i}(\alpha;y).
\endaligned\end{equation}
Thus, by Lemma \ref{4lemma0} and \eqref{4eq4}-\eqref{4eq3}, we have
\begin{equation}\aligned\label{4eq2}
\mathcal{K}(\alpha;\frac{1}{2}+iy)-2 \mathcal{K}(2\alpha;\frac{1}{2}+iy)=\frac{1}{\pi}{2}^{-\frac{5}{2}}\alpha^{-\frac{5}{2}}y^{\frac{1}{2}}\big(M(\alpha;y)+\mathcal{E}(\alpha;y)\big).\\
\endaligned\end{equation}

By \eqref{4eq2}, Proposition \ref{4prop2} is equivalent to the following lemma.

\begin{lemma}\label{4lemma}Assume that $ \alpha\geq 2,\;y\geq2$, it holds that
\begin{equation}\aligned\nonumber
y^{\frac{1}{2}}\big(M(\alpha;y)+ \mathcal{E}(\alpha;y)\big)-(\frac{\sqrt{3}}{2})^{\frac{1}{2}}\big(M(\alpha;\frac{\sqrt{3}}{2})+ \mathcal{E}(\alpha;\frac{\sqrt{3}}{2})\big)>0.
\endaligned\end{equation}
\end{lemma}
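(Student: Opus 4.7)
The strategy is to exploit the decomposition \eqref{4eq2} together with Poisson summation \eqref{Poisson}, so as to isolate the main positive contribution from exponentially small remainders. By \eqref{4eq2}, Lemma \ref{4lemma} reduces to comparing $y^{1/2}(M+\mathcal{E})(\alpha;y)$ for $y\ge 2$ with its value at $y=\sqrt{3}/2$. The plan is to treat $M_1+M_2$ as the main term while bounding $M_3, M_4$ and each $\mathcal{E}_i$ as exponentially small corrections on both sides.

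Applying the Poisson identity \eqref{Poisson} to $\vartheta(y/\alpha;0)$ and $\vartheta(y/(2\alpha);0)$ inside $M_1, M_2$, a direct computation shows the leading $\alpha^{3/2}y^{-1/2}$ contributions cancel exactly, leaving
\begin{equation*}
M_1(\alpha;y)+M_2(\alpha;y)=8\sqrt{2}\,\pi\,\alpha^{5/2}y^{-3/2}\sum_{n\ge 1}n^2\bigl(e^{-\pi n^2\alpha/y}-2e^{-2\pi n^2\alpha/y}\bigr).
\end{equation*}
To prove positivity uniformly on $\alpha\ge 2, y\ge 2$, I would split in $t:=\alpha/y$. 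If $t\ge\ln 2/\pi$, the $n=1$ summand is already nonnegative and dominates. If $t<\ln 2/\pi$ (which happens only when $y$ is large relative to $\alpha$), the $n=1$ term is negative, but Jacobi inversion applied to the $t$-derivative of $\vartheta(t;0)$ gives the asymptotic $\sum_{n\ge 1}n^2(e^{-\pi n^2 t}-2e^{-2\pi n^2 t})\sim(2-\sqrt{2})(8\pi)^{-1}t^{-3/2}$ as $t\to 0^+$, with explicit controllable corrections. Both cases yield a lower bound of the form $M_1+M_2\ge c_0\,\alpha$ for an explicit $c_0>0$.

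For $M_3, M_4$ and all $\mathcal{E}_i$, the factor $e^{-\pi\alpha y n^2}$ (or $e^{-2\pi\alpha y n^2}$) with $n\ge 1$ and $\alpha y\ge 4$ yields uniform polynomial-times-exponential bounds of the form $|M_3|+|M_4|+|\mathcal{E}|\le P(\alpha,\alpha y)\,e^{-\pi\alpha y}$, via arguments parallel to Section 3 and Lemma \ref{4lemma8}. Combining, $y^{1/2}(M+\mathcal{E})(\alpha;y)\ge c_0\,\alpha\, y^{1/2}\bigl(1-o(1)\bigr)$ on $y\ge 2$. For the RHS, substituting $y=\sqrt{3}/2$ we have $\alpha y\ge\sqrt{3}$ and $y/\alpha\le\sqrt{3}/4<1$, so Poisson-dual expansion of every relevant $\vartheta$ factor yields an upper bound of the form $(\sqrt{3}/2)^{1/2}(M+\mathcal{E})(\alpha;\sqrt{3}/2)\le C\alpha^{3/2}e^{-c_1\alpha}$ with $c_1>0$ explicit. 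For $\alpha\ge 2$ this is dwarfed by $c_0\alpha y^{1/2}$, closing the inequality.

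The main obstacle is the uniform positivity of $M_1+M_2$ in the small-$t$ regime: the displayed series has mixed-sign terms, and the clean positive asymptotic $(2-\sqrt{2})(8\pi)^{-1}t^{-3/2}$ as $t\to 0^+$ requires a Jacobi-type transformation (equivalently, pairing the sum via the Poisson-dual expansion of $\vartheta(t;0)$) to be exposed. Once this key estimate is in hand, the remaining comparisons reduce to routine exponential bookkeeping of the type already carried out in Sections 3 and 4.
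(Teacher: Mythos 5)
Your reduction is sound in outline, and your Poisson-summed identity
\[
M_1(\alpha;y)+M_2(\alpha;y)=8\sqrt{2}\,\pi\,\alpha^{5/2}y^{-3/2}\sum_{n\ge1}n^2\bigl(e^{-\pi n^2\alpha/y}-2e^{-2\pi n^2\alpha/y}\bigr)
\]
is correct: the $\alpha^{3/2}y^{-1/2}$ blocks of $M_1$ and $M_2$ in \eqref{4eq4} do cancel exactly, and your split in $t=\alpha/y$ parallels the paper's split into the cases $y\ge\alpha$ and $\alpha\ge y$. However, there is a genuine gap at the step ``Both cases yield a lower bound of the form $M_1+M_2\ge c_0\,\alpha$.'' This is false in the regime where $t=\alpha/y$ is large (e.g. $y=2$ and $\alpha\to\infty$): there every summand carries the factor $e^{-\pi n^2 t}$, so
\[
0\le M_1+M_2\le C\,\alpha^{5/2}y^{-3/2}e^{-\pi\alpha/y},
\]
which is exponentially small in $\alpha/y$ and cannot be bounded below by $c_0\alpha$. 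Consequently your combined claim $y^{1/2}(M+\mathcal{E})(\alpha;y)\ge c_0\,\alpha\,y^{1/2}(1-o(1))$, and the final assertion that the boundary term is ``dwarfed by $c_0\alpha y^{1/2}$,'' do not hold as stated.

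The gap is repairable, and the repair is precisely what the paper does in Lemma \ref{4.3lemma2}: in the regime $\alpha\ge y\ge2$ one keeps the honest, exponentially small lower bound $M(\alpha;y)\ge 10\pi\alpha^{5/2}y^{-3/2}e^{-\pi\alpha/y}$ (Lemma \ref{4.3lemma3}) and compares exponential rates with the upper bound $M(\alpha;\tfrac{\sqrt3}{2})\le C\alpha^{3/2}(1+\alpha)e^{-\frac{\sqrt3}{2}\pi\alpha}$ of Lemma \ref{lemma4.20}: since $y\ge2$ one has $\tfrac{\sqrt3}{2}-\tfrac1y\ge\tfrac{\sqrt3-1}{2}>0$, so $e^{-\frac{\sqrt3}{2}\pi\alpha}=e^{-\pi\alpha/y}\,e^{-\pi\alpha(\frac{\sqrt3}{2}-\frac1y)}$ is exponentially smaller than the main term's $e^{-\pi\alpha/y}$, and the quotient can be made uniformly small for $\alpha\ge2$. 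Your small-$t$ analysis (the Jacobi-inverted asymptotic $(2-\sqrt2)(8\pi)^{-1}t^{-3/2}$, which reproduces the paper's direct computation $M_1\ge\frac15\alpha$ in Lemma \ref{lemma4.19}) and your treatment of $M_3$, $M_4$ and the $\mathcal{E}_i$ as corrections of order $e^{-\pi\alpha y}$ are fine; only the large-$t$ branch needs the rate comparison in place of a constant lower bound.
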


To prove Lemma \ref{4lemma}, we aim to show that the term
\begin{equation}\nonumber
y^{\frac{1}{2}}M(\alpha;y) - (\frac{\sqrt{3}}{2})^{\frac{1}{2}}M(\alpha;\frac{\sqrt{3}}{2})
\end{equation}
is positive and serves as the principal term, while the term
\begin{equation}\nonumber
y^{\frac{1}{2}}\mathcal{E}(\alpha;y) - (\frac{\sqrt{3}}{2})^{\frac{1}{2}}\mathcal{E}(\alpha;\frac{\sqrt{3}}{2})
\end{equation}
is significantly smaller in comparison when when $\alpha,y\geq2$. Given the complexity of this problem, our proof will be divided into two cases: $\frac{y}{\alpha}\geq1$ and $\frac{y}{\alpha}\in(0,1)$.

The proof of cases $\frac{y}{\alpha}\geq 1$ and $\frac{y}{\alpha}\in(0,1)$ will be given in Lemma \ref{4.2lemma2} and \ref{4.3lemma2}, respectively.
\begin{lemma}\label{4.2lemma2}Assume that $ y\geq\alpha \geq 2$. Then
\begin{itemize}
  \item [(1)]
  $ y^{\frac{1}{2}} M(\alpha;y)-(\frac{\sqrt{3}}{2})^{\frac{1}{2}}M(\alpha;\frac{\sqrt{3}}{2})\geq \frac{1}{500} \sqrt{y}\alpha>0.$
\item [(2)]
  $\left|\frac{ \mathcal{E}(\alpha;y)}{\alpha}\right|+\frac{{3}^{\frac{1}{4}}}{2}\left|\frac{\mathcal{E}(\alpha;\frac{\sqrt{3}}{2}) }{\alpha}\right|\leq10^{-6}.$
    \item [(3)]    $\left|\frac{ y^{\frac{1}{2}}\mathcal{E}(\alpha;y)-(\frac{\sqrt{3}}{2})^{\frac{1}{2}}\mathcal{E}(\alpha;\frac{\sqrt{3}}{2})}{y^{\frac{1}{2}}M(\alpha;y)-(\frac{\sqrt{3}}{2})^{\frac{1}{2}}M(\alpha;\frac{\sqrt{3}}{2})}\right|
\leq 10^{-3}.$
\item [(4)]
$y^{\frac{1}{2}}\big(M(\alpha;y)+ \mathcal{E}(\alpha;y)\big)-(\frac{\sqrt{3}}{2})^{\frac{1}{2}}\big(M(\alpha;\frac{\sqrt{3}}{2})+ \mathcal{E}(\alpha;\frac{\sqrt{3}}{2})\big)\geq \frac{1}{500} \sqrt{y}\alpha\:(1-10^{-3})>0.$
         \end{itemize}

\end{lemma}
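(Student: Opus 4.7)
The plan is to establish items (1) and (2) as the substantive steps; items (3) and (4) then follow from them by arithmetic. The two evaluation points $y\ge\alpha\ge 2$ and $y=\sqrt{3}/2$ are handled with different tools: the direct series expansion $\vartheta(X;0)=1+2\sum_{n\ge 1}e^{-\pi n^{2}X}$ applies at the former (where $y/\alpha\ge 1$ is bounded below), while Poisson summation \eqref{Poisson} is needed at the latter (where the dual variable $\alpha/y\ge 2/\sqrt{3}$ is bounded below).

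For item (1), decompose $M(\alpha;y)=M_{1}+M_{2}+M_{3}+M_{4}$ from \eqref{4eq4} and aim for $M(\alpha;y)\ge c\,\alpha$ for some absolute $c>0$ together with $|M(\alpha;\sqrt{3}/2)|\le C$ for some absolute $C$. Under $y\ge\alpha\ge 2$, the arguments $y/\alpha\ge 1$ and $y/(2\alpha)\ge 1/2$ are bounded below, so the direct series gives $\vartheta(y/\alpha;0)\in[1,1.09]$ and $\vartheta(y/(2\alpha);0)\in[1,1.42]$; a brief monotonicity check on $X\mapsto 2\sqrt{2}\,\vartheta(X;0)-2\vartheta(X/2;0)$ for $X\ge 1$ yields $M_{1}(\alpha;y)\ge 2\alpha\bigl[\sqrt{2}\,\vartheta(1;0)-\vartheta(1/2;0)\bigr]\approx 0.24\,\alpha$. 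Since $\vartheta_{X}(\cdot;0)<0$ is increasing in $X$, one checks $M_{2}(\alpha;y)\ge 0$; and the prefactors $e^{-\pi\alpha y},e^{-2\pi\alpha y}$ together with $\alpha y\ge 4$ render $|M_{3}|,|M_{4}|\le 10^{-3}\alpha$. This gives $M(\alpha;y)\ge c\,\alpha$ with $c\ge 0.23$.

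At $y=\sqrt{3}/2$, $\alpha\ge 2$, the argument $y/\alpha\le\sqrt{3}/4$ is small, so I apply Poisson: $\vartheta\bigl(\sqrt{3}/(2k\alpha);0\bigr)=(2k\alpha/\sqrt{3})^{1/2}\vartheta(2k\alpha/\sqrt{3};0)$ for $k=1,2$. The leading $\alpha^{3/2}$ terms of the two summands in $M_{1}(\alpha;\sqrt{3}/2)$ cancel exactly because $2\sqrt{2}\cdot 1 = 2\cdot\sqrt{2}$; the differentiated Poisson identity produces the same cancellation at order $\alpha^{5/2}$ in $M_{2}(\alpha;\sqrt{3}/2)$; and $M_{3},M_{4}$ already carry explicit $e^{-\pi\alpha\sqrt{3}/2}$ factors. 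The residuals are of the form $\mathrm{poly}(\alpha)\cdot e^{-2\pi\alpha/\sqrt{3}}$, which is decreasing for $\alpha\ge 2$, so $|M(\alpha;\sqrt{3}/2)|\le C$ for an absolute constant $C$ (numerically, $C\le 0.6$). Combining: $y^{1/2}M(\alpha;y)-(\sqrt{3}/2)^{1/2}M(\alpha;\sqrt{3}/2)\ge c\sqrt{y}\,\alpha - (\sqrt{3}/2)^{1/2}C$, and since $\sqrt{y}\,\alpha\ge 2\sqrt{2}$ throughout the regime while $(c-1/500)\cdot 2\sqrt{2}>(\sqrt{3}/2)^{1/2}C$, item (1) follows.

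For item (2), each $\mathcal{E}_{i}$ in \eqref{4eq5} is a tail in $n\ge 2$ with an $e^{-\pi\alpha y n^{2}}$ (or $e^{-2\pi\alpha y n^{2}}$) factor; when $y\ge\alpha\ge 2$ the $n=2$ term is already at most $e^{-16\pi}\approx 10^{-22}$, the theta factors grow polynomially, so the total is negligible. At $y=\sqrt{3}/2$, $\alpha\ge 2$, the theta factors in small argument are bounded via Poisson (polynomial-in-$\alpha$ times an extra exponential $e^{-\pi\alpha n^{2}/(2\sqrt{3})}$), which together with the existing $e^{-\pi\alpha n^{2}\sqrt{3}/2}$ gives $e^{-2\pi\alpha n^{2}/\sqrt{3}}\le e^{-8\pi/\sqrt{3}}\approx 10^{-7}$ already at $n=\alpha=2$, comfortably below $10^{-6}$ after dividing by $\alpha$ and adding the weight. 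Items (3), (4) then follow: in (3), the numerator is $\le 2y^{1/2}\alpha\cdot 10^{-6}$ (using $y\ge 2$) while the denominator is $\ge y^{1/2}\alpha/500$, so the ratio is $\le 10^{-3}$; (4) is the triangle inequality. The principal technical obstacle is the explicit constant $1/500$ in item (1): with only $\sqrt{y}\,\alpha\ge 2\sqrt{2}$ available against an upper bound on $|M(\alpha;\sqrt{3}/2)|$ that is not trivially small, careful bookkeeping of the $M_{3}, M_{4}$ corrections and of the Poisson-cancelled $M_{1}, M_{2}$ pieces at the worst case $\alpha=y=2$ is needed.
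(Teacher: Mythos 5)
Your proposal is correct and follows essentially the same route as the paper: the same decomposition $M=\sum_i M_i$, $\mathcal{E}=\sum_i\mathcal{E}_i$, a direct-series lower bound $M(\alpha;y)\gtrsim \frac15\alpha$ driven by $M_1$ with $M_2,M_3,M_4$ controlled, a Poisson-summation upper bound at $y=\tfrac{\sqrt3}{2}$ exploiting exactly the cancellation $2\sqrt2\cdot1=2\cdot\sqrt2$ of the leading $\alpha^{3/2}$ and $\alpha^{5/2}$ terms, and exponential smallness of the tails for item (2). The only deviations are cosmetic (you bound $|M_3|,|M_4|\le 10^{-3}\alpha$ where the paper shows they are positive, and you use an absolute bound $|M(\alpha;\tfrac{\sqrt3}{2})|\le 0.6$ in place of the paper's $\tfrac{3}{10}\alpha$), and your arithmetic for the constant $\tfrac{1}{500}$ checks out.
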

\begin{proof}
(1). Notice that $y\geq\alpha \geq 2$. By Lemmas \ref{lemma4.19} and \ref{lemma4.20}, we have
 \begin{equation}\aligned\nonumber
\sqrt{y} M(\alpha;y)-(\frac{\sqrt{3}}{2})^{\frac{1}{2}}M(\alpha;\frac{\sqrt{3}}{2})\geq \sqrt{y}\alpha\big(\frac{1}{5} -y^{-\frac{1}{2}}(\frac{\sqrt{3}}{2})^{\frac{1}{2}}\cdot \frac{3}{10}\big)\geq\frac{1}{500} \sqrt{y}\alpha.
\endaligned\end{equation}

(2). By \eqref{4eq5} and Lemma \ref{lem4.23}, we have
\begin{equation}\aligned\nonumber
\left|\frac{\mathcal{E}_{1}(\alpha;y) }{\alpha}\right|
=&4\sqrt{2}e^{-4\pi\alpha y}\vartheta(\frac{y}{\alpha};1)\sum_{n\geq2}(1+2\pi\alpha y {n}^2) e^{-\pi\alpha y (n^2-4)}\left|\frac{\vartheta(\frac{y}{\alpha};\frac{n}{2}}{\vartheta(\frac{y}{\alpha};1)})\right|\\
\leq& 4\sqrt{2}e^{-4\pi\alpha y}\vartheta(\frac{y}{\alpha};1)\sum_{n\geq2}(1+2\pi\alpha y {n}^2) e^{-\pi\alpha y (n^2-4)}\\
\leq& 4\sqrt{2}e^{-4\pi\alpha y}\vartheta(\frac{y}{\alpha};1)\Big(1+8\pi\alpha y +\sum_{n\geq3}(1+2\pi\alpha y {n}^2) e^{-\pi\alpha y (n^2-4)}\Big).\\
\endaligned\end{equation}
Given that $y\geq \alpha\geq2$, we recall from \eqref{TXY} that $$\vartheta(\frac{y}{\alpha};1)=1+2\underset{n\geq1}{\sum}e^{-\pi n^2\frac{y}{\alpha}}\leq 1+2\underset{n\geq1}{\sum}e^{-\pi n^2}.$$
Trivially, we have the following bound
$$\sum_{n\geq1} e^{-\pi n^2}\leq4.33\cdot 10^{-2},\;\sum_{n\geq3}(1+8\pi {n}^2) e^{-4 \pi (n^2-4)}\leq 10^{-40}.$$
Therefore, combining these together, we obtain that
\begin{equation}\aligned\label{M2aaa}
\left|\frac{\mathcal{E}_{1}(\alpha;y) }{\alpha}\right|\leq \frac{31}{5}(1+8\pi\alpha y)e^{-4\pi\alpha y}.
\endaligned\end{equation}
Similarly, to avoid repetition, we have
\begin{equation}\aligned\label{M2aab}
\left|\frac{\mathcal{E}(\alpha;y)}{\alpha}\right|\leq &\underset{i=1}{\overset{4}{\sum}}\left|\frac{\mathcal{E}_{i}(\alpha;y)}{\alpha}\right|\leq2\pi(1+8\pi\alpha y+\frac{y}{2\alpha})e^{-4\pi\alpha y},\\
\left|\frac{\mathcal{E}(\alpha;\frac{\sqrt{3}}{2})}{\alpha}\right|\leq & \underset{j=1}{\overset{4}{\sum}}\left|\frac{\mathcal{E}_{j}(\alpha;\frac{\sqrt{3}}{2})}{\alpha}\right|\leq4\pi{\alpha}^{\frac{1}{2}}(1+2\sqrt{3}\pi\alpha)e^{-2\sqrt{3}\pi\alpha}.
\endaligned\end{equation}
\eqref{M2aaa} and \eqref{M2aab} yield (2).

(3). Noting $y\geq2$, by Lemma \ref{4.2lemma2}, one has
\begin{equation}\aligned\label{4.2lemaddeq1}
\left|\frac{ y^{\frac{1}{2}}\mathcal{E}(\alpha;y)-(\frac{\sqrt{3}}{2})^{\frac{1}{2}}\mathcal{E}(\alpha;\frac{\sqrt{3}}{2})}{y^{\frac{1}{2}}M(\alpha;y)-(\frac{\sqrt{3}}{2})^{\frac{1}{2}}M(\alpha;\frac{\sqrt{3}}{2})}\right|
\leq&\left| \frac{y^{\frac{1}{2}}\mathcal{E}(\alpha;y)-(\frac{\sqrt{3}}{2})^{\frac{1}{2}}\mathcal{E}(\alpha;\frac{\sqrt{3}}{2}) }{\frac{1}{500}\sqrt{y}\alpha}\right|\\
\leq& 500\bigg(\left|\frac{ \mathcal{E}(\alpha;y)}{\alpha}\right|+(\frac{\sqrt{3}}{2})^{\frac{1}{2}}y^{-\frac{1}{2}}\left|\frac{\mathcal{E}(\alpha;\frac{\sqrt{3}}{2}) }{\alpha}\right|\bigg)\\
\leq& 500\bigg(\left|\frac{ \mathcal{E}(\alpha;y)}{\alpha}\right|+\frac{{3}^{\frac{1}{4}}}{2}\left|\frac{\mathcal{E}(\alpha;\frac{\sqrt{3}}{2}) }{\alpha}\right|\bigg).
\endaligned\end{equation}
Then \eqref{4.2lemaddeq1} and item (2) yield item (3). Items (1) and (3) yield item (4).
\end{proof}

Recall that $M(\alpha;y)=\underset{i=1}{\overset{4}{\sum}}M_{i}(\alpha;y).$ In the following lemma, we will provide an lower bound for $M(\alpha;y)$ to prove (1) of Lemma \ref{4.2lemma2}.

\begin{lemma}\label{lemma4.19} Assume that $ y\geq \alpha \geq2,$ then $M(\alpha;y)\geq\frac{1}{5}\alpha$. We split it into four items.
\begin{itemize}
  \item [(1)] $M_{1}(\alpha;y)\geq \frac{1}{5}\alpha.$
      \item [(2)] $M_{2}(\alpha;y)>0.$
      \item [(3)] $M_{3}(\alpha;y)>0.$
      \item [(4)] $M_{4}(\alpha;y)>0.$
   \end{itemize}
\end{lemma}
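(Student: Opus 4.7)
The plan is to handle the four items in parallel, exploiting the exponential expansions of $\vartheta(X;0)$ and $\vartheta(X;1/2)$ in the regime $t:=y/\alpha\geq 1$ (hence $y/(2\alpha)=t/2\geq 1/2$ automatically). The items split into two groups: items (1)--(2), which concern $\vartheta(\cdot;0)$, and items (3)--(4), which concern $\vartheta(\cdot;1/2)$ and $\vartheta_{X}(\cdot;1/2)$ and are suppressed by a large exponential factor.

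For items (1) and (2) I would introduce the single-variable function
\[
F(t):=2\sqrt{2}\,\vartheta(t;0)-2\,\vartheta(t/2;0),
\]
so that $M_{1}(\alpha;y)=\alpha F(t)$ and, differentiating termwise, $M_{2}(\alpha;y)=2y\,F'(t)$. Item (2) is then equivalent to $F'(t)>0$ on $[1,\infty)$, and once this is established, item (1) reduces to verifying $F(1)\geq 1/5$. Using $\vartheta_{X}(X;0)=-2\pi\sum_{n\geq 1}n^{2}e^{-\pi n^{2}X}$ gives
\[
F'(t)=2\pi e^{-\pi t/2}\Bigl[1-2\sqrt{2}\,e^{-\pi t/2}+R(t)\Bigr],\qquad R(t)=O(e^{-3\pi t/2}),
\]
and on $t\geq 1$ one has $2\sqrt{2}\,e^{-\pi/2}<0.59$ with $|R(t)|$ utterly negligible, so $F'>0$. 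To prove $F(1)\geq 1/5$, I would truncate each theta series at $n=3$ and bound the tail geometrically; numerically $F(1)\approx 0.234$, leaving a rigorous margin above $0.2$.

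For items (3) and (4) I would factor the dominant exponential $e^{-\pi\alpha y}$ out of each expression. For (3),
\[
M_{3}=4\alpha e^{-\pi\alpha y}\Bigl[\sqrt{2}(1+2\pi\alpha y)\vartheta(t;1/2)-(1+4\pi\alpha y)\,e^{-\pi\alpha y}\,\vartheta(t/2;1/2)\Bigr].
\]
Poisson summation gives $\vartheta(X;1/2)=X^{-1/2}\sum_{n}e^{-\pi(n-1/2)^{2}/X}>0$ for all $X>0$, and the alternating-series form $\vartheta(t;1/2)=1-2e^{-\pi t}+2e^{-4\pi t}-\cdots$ yields $\vartheta(t;1/2)\geq 1-2e^{-\pi}>0.91$ on $t\geq 1$ and $\vartheta(t/2;1/2)\leq 1$. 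Combined with $(1+4\pi\alpha y)\leq 2(1+2\pi\alpha y)$ and $e^{-\pi\alpha y}\leq e^{-4\pi}$, the first bracketed term dominates by several orders of magnitude. For item (4), factoring similarly,
\[
M_{4}=4y\,e^{-\pi\alpha y}\Bigl[2\sqrt{2}\,\vartheta_{X}(t;1/2)-e^{-\pi\alpha y}\,\vartheta_{X}(t/2;1/2)\Bigr],
\]
and the expansion $\vartheta_{X}(X;1/2)=2\pi e^{-\pi X}\bigl(1+O(e^{-3\pi X})\bigr)$ (positive for $X\geq 1/2$ since $4e^{-3\pi/2}<0.04$) shows both derivatives are positive; the ratio of the two bracketed terms is at least $2\sqrt{2}\,e^{\pi y(\alpha-1/(2\alpha))}\geq 2\sqrt{2}\,e^{7\pi/2}$.

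The principal technical obstacle is not conceptual but quantitative: producing rigorous (rather than numerical) constants for the theta tails. The tightest check is $F(1)\geq 1/5$ in item (1), whose margin $\approx 0.034$ is the smallest in the lemma and requires keeping the $n=1,2$ contributions explicitly and bounding $\sum_{n\geq 3}n^{k}e^{-\pi n^{2}/2}$ geometrically. All other inequalities enjoy an $e^{-\pi\alpha y}\leq e^{-4\pi}$ suppression and are easy to prove with crude bounds once the leading exponentials have been extracted.
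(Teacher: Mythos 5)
Your proposal is correct and follows essentially the same route as the paper: expand $\vartheta(\cdot;0)$, $\vartheta(\cdot;\tfrac12)$ and $\vartheta_X(\cdot;\tfrac12)$ as exponential series, isolate the leading $n=1$ term using $y/\alpha\geq 1$ for items (1)--(2) and the $e^{-\pi\alpha y}\leq e^{-4\pi}$ suppression for items (3)--(4), and bound the tails; the identities $M_1=\alpha F(t)$ and $M_2=2yF'(t)$ check out, so deducing (1) from the monotonicity of $F$ plus $F(1)\approx 0.234>\tfrac15$ is a valid (and slightly tidier) repackaging of the paper's termwise bound $\sum_{n\ge1}e^{-\pi n^2 y/(2\alpha)}\bigl(1-\sqrt2\,e^{-\pi n^2 y/(2\alpha)}\bigr)\le 0.1486\cdots$ at $y/\alpha=1$.
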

\begin{proof}
(1). By \eqref{TXY} and \eqref{4eq4}, one has
\begin{equation}\aligned\nonumber
 M_{1}(\alpha;y)
 = (2\sqrt{2}-2)\alpha-4\alpha\underset{n=1}{\overset{\infty}{\sum}}e^{-\pi n^2\frac{y}{2\alpha}}(1-\sqrt{2}e^{-\pi n^2\frac{y}{2\alpha}}).
\endaligned\end{equation}
Since $\frac{y}{\alpha}\geq 1$, one gets
\begin{equation}\aligned\nonumber
\underset{n=1}{\overset{\infty}{\sum}}e^{-\pi n^2\frac{y}{2\alpha}}(1-\sqrt{2}e^{-\pi n^2\frac{y}{2\alpha}})\leq \underset{n=1}{\overset{\infty}{\sum}}e^{-\frac{\pi n^2}{2}}(1-\sqrt{2}e^{-\frac{\pi n^2}{2}})=0.1486\cdots.
\endaligned\end{equation}
(2). For $M_{2}(\alpha;y)$, by \eqref{TXY}, one has
\begin{equation}\aligned\label{lem4.17eq1}
{\vartheta}_{X}(X;Y)=-2\pi\underset{n=1}{\overset{\infty}{\sum}}n^2e^{-\pi n^2 X}\cos(2n\pi Y).
\endaligned\end{equation}
By \eqref{4eq4}, \eqref{lem4.17eq1} and using $\frac{y}{\alpha}\geq1$, one has
\begin{equation}\aligned\nonumber
 M_{2}(\alpha;y)=&4\pi y\underset{n=1}{\overset{\infty}{\sum}}n^2 e^{-\pi n^2 \frac{y}{2\alpha}}(1-2\sqrt{2} e^{-\pi n^2 \frac{y}{2\alpha}})\geq
 4\pi y\underset{n=1}{\overset{\infty}{\sum}}n^2 e^{-\pi n^2 \frac{y}{2\alpha}}(1-2\sqrt{2} e^{-\frac{\pi }{2}})>0.
\endaligned\end{equation}
(3). For $M_{3}(\alpha;y)$, by \eqref{TXY} and \eqref{4eq4}, one has
\begin{equation}\aligned\nonumber
 M_{3}(\alpha;y)=&4\sqrt{2}\alpha(1+2\pi\alpha y)e^{-\pi\alpha y}(1+2\underset{n=1}{\overset{\infty}{\sum}}e^{-\pi n^2\frac{y}{\alpha}}(-1)^{n})\\
& -4\alpha(1+4\pi\alpha y)e^{-2\pi\alpha y}(1+2\underset{n=1}{\overset{\infty}{\sum}}e^{-\pi n^2\frac{y}{2\alpha}}(-1)^{n})\\
\geq& 4\sqrt{2}\alpha(1+2\pi\alpha y)e^{-\pi\alpha y}(1-2e^{-\pi\frac{y}{\alpha}})-4\alpha(1+4\pi\alpha y)e^{-2\pi\alpha y}.
\endaligned\end{equation}
Notice that $ y\geq\alpha \geq 2$. Then $M_{3}(\alpha;y)>0$.

(4). For $ M_{4}(\alpha;y)$, by \eqref{4eq4} and \eqref{lem4.17eq1}, one has
\begin{equation}\aligned\nonumber
 M_{4}(\alpha;y) =&16\sqrt{2}\pi y e^{-\pi\alpha y} \underset{n=1}{\overset{\infty}{\sum}}n^2 e^{-\pi n^2 \frac{y}{\alpha}}(-1)^{n+1}+8\pi y e^{-2\pi\alpha y}\underset{n=1}{\overset{\infty}{\sum}}n^2 e^{-\pi n^2 \frac{y}{2\alpha}}(-1)^{n}\\
 \geq &16\sqrt{2}\pi y e^{-\pi y(\alpha+\frac{1}{\alpha})}(1-4 e^{-3\pi \frac{y}{\alpha}}-\frac{\sqrt{2}}{4}e^{-\pi y(\alpha-\frac{1}{2\alpha})}).
\endaligned\end{equation}
Note that $y\geq\alpha \geq 2$, then $e^{-3\pi \frac{y}{\alpha}}\leq e^{-3\pi },\:\:e^{-\pi y(\alpha-\frac{1}{2\alpha})}\leq e^{-\frac{7}{2}\pi}$. Thus $ M_{4}(\alpha;y)>0.$
\end{proof}

Before providing an upper bound function of $M(\alpha;\frac{\sqrt{3}}{2})$, we denote that
\begin{equation}\aligned\label{lem4.15eq0}
 \epsilon_{1}:=&\underset{n=3}{\overset{\infty}{\sum}}e^{-\frac{2\sqrt{3}\pi\alpha (n^2-4)}{3}},\:\:\:\:\:\:\:\:\:\:\:\:\:\:\:\:\:\:\:\:\:
 \epsilon_{2}:=\underset{n=1}{\overset{\infty}{\sum}}e^{-\frac{2\sqrt{3}\pi\alpha(n-\frac{1}{2})^2}{3}},\\
 \epsilon_{3}:=&\underset{n=2}{\overset{\infty}{\sum}}n^2e^{-\frac{2\sqrt{3}\pi\alpha ({n}^2-1)}{3}},\:\:\:\:\:\:\:\:\:\:\:\:\:\:\:\:\:
  \epsilon_{4}:=\underset{n=2}{\overset{\infty}{\sum}}4(n-\frac{1}{2})^2e^{-\frac{2\sqrt{3}\pi\alpha(n-1)n}{3}}.\\
\endaligned\end{equation}

\begin{lemma}[An upper bound function of $M(\alpha;\frac{\sqrt{3}}{2})$]\label{lemma4.20}Assume that $ \alpha \geq 2$. Then
\begin{itemize}
  \item [(1)] $M_{1}(\alpha;\frac{\sqrt{3}}{2})\leq 8{\alpha}^{\frac{3}{2}}3^{-\frac{1}{4}}\big(e^{-\frac{2\sqrt{3}\pi\alpha }{3}}-e^{-\frac{4\sqrt{3}\pi\alpha}{3}}+e^{-\frac{8\sqrt{3}\pi\alpha }{3}}(1+\epsilon_{1})\big).$
      \item [(2)] $M_{2}(\alpha;\frac{\sqrt{3}}{2})
  \leq 8{\alpha}^{\frac{3}{2}}3^{-\frac{1}{4}}(e^{-\frac{4\sqrt{3}\pi\alpha} {3}}-e^{-\frac{2\sqrt{3}\pi\alpha}{3}})
+32\pi{\alpha}^{\frac{5}{2}}3^{-\frac{3}{4}}\big(e^{-\frac{2\sqrt{3}\pi\alpha }{3}}(1+\epsilon_{3})-2e^{-\frac{4\sqrt{3}\pi\alpha}{3}}\big).$
      \item [(3)] $M_{3}(\alpha;\frac{\sqrt{3}}{2})
  \leq 16{\alpha}^{\frac{3}{2}}3^{-\frac{1}{4}}e^{-\frac{\sqrt{3}}{2}\pi\alpha}\big( (1+\sqrt{3}\pi\alpha)\epsilon_{2}-(1+2\sqrt{3}\pi\alpha)e^{-\frac{5\sqrt{3}\pi\alpha}{6}}\big).$
      \item [(4)] $M_{4}(\alpha;\frac{\sqrt{3}}{2})\leq 16\pi{\alpha}^{\frac{5}{2}}3^{-\frac{3}{4}}e^{-\frac{2\sqrt{3}\pi\alpha}{3}}(1+\epsilon_{4}).$
      \item [(5)] $M(\alpha;\frac{\sqrt{3}}{2})\leq \frac{792}{25}{\alpha}^{\frac{3}{2}}3^{-\frac{1}{4}}e^{-\frac{\sqrt{3}}{2}\pi\alpha}\big( 1+\frac{\sqrt{3}\pi\alpha}{11}\big).$ Trivially, $M(\alpha;\frac{\sqrt{3}}{2})\leq\frac{3}{10}\alpha.$
   \end{itemize}
\end{lemma}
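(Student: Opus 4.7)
The plan is to apply the Poisson summation formula~\eqref{Poisson} to each of the eight theta quantities (four values of $\vartheta$ and four of $\vartheta_X$) appearing in $M_1,\ldots,M_4$ evaluated at $y=\frac{\sqrt{3}}{2}$. Since $\alpha\geq 2$, the arguments $\frac{y}{\alpha}=\frac{\sqrt{3}}{2\alpha}\leq\frac{\sqrt{3}}{4}$ and $\frac{y}{2\alpha}\leq\frac{\sqrt{3}}{8}$ are small, so the dual series converges extremely rapidly. Writing $c:=\pi\cdot\frac{2\alpha}{\sqrt{3}}=\frac{2\sqrt{3}\pi\alpha}{3}$, the exponent of the first nontrivial dual frequency is already $\geq c\geq\frac{4\sqrt{3}\pi}{3}>7$, matching the exponents appearing in the definitions of $\epsilon_1,\ldots,\epsilon_4$ in~\eqref{lem4.15eq0}. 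A key algebraic observation is that the coefficients in each $M_i$ are matched so that the Poisson-dualized constant coefficients of $\vartheta(\frac{y}{\alpha};\cdot)$ and $\vartheta(\frac{y}{2\alpha};\cdot)$ coincide (both $4\alpha^{3/2}3^{-1/4}$ in $M_1$; both $8\alpha^{3/2}3^{-1/4}$ in $M_2$; $16\alpha^{3/2}3^{-1/4}$ in $M_3,M_4$), so the $n=0$ dual terms cancel and only $n\geq 1$ survives.

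For~(1), applying~\eqref{Poisson} yields $M_1(\alpha;\frac{\sqrt{3}}{2})=8\alpha^{3/2}3^{-1/4}\sum_{n\geq 1}(e^{-cn^2}-e^{-2cn^2})$; extracting $n=1$, dropping the nonpositive tail $-\sum_{n\geq 2}e^{-2cn^2}$, and writing $\sum_{n\geq 2}e^{-cn^2}=e^{-4c}(1+\epsilon_1)$ gives the claim. For~(2), differentiating~\eqref{Poisson} in $X$ gives $\vartheta_X(X;0)=X^{-3/2}\sum_n e^{-\pi n^2/X}(\pi n^2/X-\frac{1}{2})$, so $M_2(\alpha;\frac{\sqrt{3}}{2})$ splits into a $\frac{1}{2}(e^{-2cn^2}-e^{-cn^2})$-piece (yielding the $(e^{-4\sqrt{3}\pi\alpha/3}-e^{-2\sqrt{3}\pi\alpha/3})$-term) and a $cn^2(e^{-cn^2}-2e^{-2cn^2})$-piece (yielding the $\epsilon_3$-based term after separating $n=1$ and factoring $e^{-c}$).

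For~(3)--(4), the dual form $\vartheta(X;\frac{1}{2})=2X^{-1/2}\sum_{n\geq 1}e^{-\pi(n-\frac{1}{2})^2/X}$ identifies $\vartheta(\frac{\sqrt{3}}{2\alpha};\frac{1}{2})=2\sqrt{2\alpha/\sqrt{3}}\,\epsilon_2$. In $M_3$ I keep the full $\epsilon_2$ for the additive term and use the $n=1$ lower bound $\sum_{n\geq 1}e^{-2c(n-1/2)^2}\geq e^{-c/2}$ for the subtracted term (giving an upper bound after the sign flip); the identity $e^{-\sqrt{3}\pi\alpha}\cdot e^{-c/2}=e^{-\frac{\sqrt{3}\pi\alpha}{2}}\cdot e^{-\frac{5\sqrt{3}\pi\alpha}{6}}$ delivers~(3). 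For $M_4$, differentiating yields summands proportional to $\pi(n-\frac{1}{2})^2/X-\frac{1}{2}$, which are strictly positive for $\alpha\geq 2$ since $\pi/X=c$ and $c/4>\frac{1}{2}$. Hence the subtracted $\vartheta_X(\frac{y}{2\alpha};\frac{1}{2})$-term in $M_4$ is positive and is discarded; for the remaining term I bound $c(n-\frac{1}{2})^2-\frac{1}{2}\leq c(n-\frac{1}{2})^2$, factor out the $n=1$ contribution $\frac{c}{4}e^{-c/4}$, and collect the tail as $\epsilon_4$. The identity $e^{-\sqrt{3}\pi\alpha/2}\cdot e^{-c/4}=e^{-c}$ produces the exponential factor in~(4).

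For~(5), I combine~(1)--(4). The dominant positive contribution comes from the $(1+\sqrt{3}\pi\alpha)\epsilon_2$-term in $M_3$ and the $(1+\epsilon_4)$-term in $M_4$; using $\epsilon_2\leq e^{-c/4}/(1-e^{-2c})$ and $\alpha\geq 2$, both reduce to a scalar multiple of $\alpha^{3/2}3^{-1/4}(1+\sqrt{3}\pi\alpha)e^{-\frac{2\sqrt{3}\pi\alpha}{3}}$. Writing $e^{-\frac{2\sqrt{3}\pi\alpha}{3}}=e^{-\frac{\sqrt{3}\pi\alpha}{2}}\cdot e^{-\frac{\sqrt{3}\pi\alpha}{6}}$ and bounding $e^{-\sqrt{3}\pi\alpha/6}\leq e^{-\sqrt{3}\pi/3}$ for $\alpha\geq 2$ brings every piece to the common factor $e^{-\frac{\sqrt{3}\pi\alpha}{2}}$; contributions from $M_1,M_2$ and the subtracted bonuses of $M_3,M_4$ are absorbed into the constant. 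A final elementary estimate of $(1+\sqrt{3}\pi\alpha)e^{-\sqrt{3}\pi\alpha/6}/(1+\frac{\sqrt{3}\pi\alpha}{11})$ for $\alpha\geq 2$ verifies the prefactor $\frac{792}{25}$, and the trivial bound $\frac{3\alpha}{10}$ follows by evaluating the principal bound at $\alpha=2$ together with monotonicity of $\alpha^{3/2}(1+\frac{\sqrt{3}\pi\alpha}{11})e^{-\sqrt{3}\pi\alpha/2}$ for $\alpha\geq 2$. \textbf{The hard part} is the numerical bookkeeping in~(5): each $M_i$ contributes terms of slightly different exponential rates, and marshalling them into a single expression with the explicit constants $\frac{792}{25}$ and $\frac{1}{11}$ requires a careful split of $e^{-c}$ into $e^{-\sqrt{3}\pi\alpha/2}\cdot e^{-\sqrt{3}\pi\alpha/6}$ and uniform numerical control on all four $\epsilon_i$ for $\alpha\geq 2$.
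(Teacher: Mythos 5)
Your proposal is correct and follows essentially the same route as the paper: Poisson summation \eqref{Poisson} applied to each $M_i$ (with the $n=0$ dual terms cancelling by the matched prefactors), extraction of the leading $n=1$ contributions with the tails collected into $\epsilon_1,\dots,\epsilon_4$, discarding of the manifestly nonpositive pieces, and a final numerical combination for item (5); the exponent identities such as $e^{-\frac{\sqrt{3}\pi\alpha}{2}}e^{-c/4}=e^{-\frac{2\sqrt{3}\pi\alpha}{3}}$ all check out. The paper's own proof of (5) is only the one-line remark that it follows from (1)--(4) for $\alpha\geq 2$, so your more explicit bookkeeping there is, if anything, a slight elaboration of the same argument.
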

\begin{proof}
(1). For $M_{1}(\alpha;\frac{\sqrt{3}}{2})$, by \eqref{Poisson} and \eqref{4eq4}, one has
\begin{equation}\aligned\nonumber
 M_{1}(\alpha;\frac{\sqrt{3}}{2})=&8 {\alpha}^{\frac{3}{2}}3^{-\frac{1}{4}}\underset{n=1}{\overset{\infty}{\sum}}(e^{-\frac{2\sqrt{3}\pi\alpha n^2}{3}}-e^{-\frac{4\sqrt{3}\pi\alpha n^2}{3}})\\
  \leq &8 {\alpha}^{\frac{3}{2}}3^{-\frac{1}{4}}(e^{-\frac{2\sqrt{3}\pi\alpha }{3}}+e^{-\frac{8\sqrt{3}\pi\alpha }{3}}(1+\epsilon_{1})-e^{-\frac{4\sqrt{3}\pi\alpha}{3}}).\\
\endaligned\end{equation}
(2). For $M_{2}(\alpha;\frac{\sqrt{3}}{2})$, by \eqref{Poisson}, one has
\begin{equation}\aligned\label{lem4.22eq1}
\vartheta_{X}(X;Y)=-\frac{1}{2}X^{-\frac{3}{2}}\sum_{n\in\mathbb{Z}} e^{-\pi \frac{(n-Y)^2}{X}} +\pi {X}^{-\frac{5}{2}}\sum_{n\in\mathbb{Z}} (n-Y)^2e^{-\pi \frac{(n-Y)^2}{X}}.
\endaligned\end{equation}
Then by \eqref{4eq4} and \eqref{lem4.22eq1}, we have
\begin{equation}\aligned\nonumber
M_{2}(\alpha;\frac{\sqrt{3}}{2})=&8{\alpha}^{\frac{3}{2}}3^{-\frac{1}{4}}\underset{n=1}{\overset{\infty}{\sum}}(e^{-\frac{4\sqrt{3}\pi\alpha {n}^2}{3}}-e^{-\frac{2\sqrt{3}\pi\alpha {n}^2}{3}})\\
&+32\pi{\alpha}^{\frac{5}{2}}3^{-\frac{3}{4}}\underset{n=1}{\overset{\infty}{\sum}}(n^2e^{-\frac{2\sqrt{3}\pi\alpha {n}^2}{3}}-2 n^2e^{-\frac{4\sqrt{3}\pi\alpha {n}^2}{3}})\\
\leq& 8{\alpha}^{\frac{3}{2}}3^{-\frac{1}{4}}(e^{-\frac{4\sqrt{3}\pi\alpha} {3}}-e^{-\frac{2\sqrt{3}\pi\alpha}{3}})+32\pi{\alpha}^{\frac{5}{2}}3^{-\frac{3}{4}}\big(e^{-\frac{2\sqrt{3}\pi\alpha }{3}}(1+\epsilon_{3})-2e^{-\frac{4\sqrt{3}\pi\alpha}{3}}\big).
\endaligned\end{equation}
(3). For $M_{3}(\alpha;\frac{\sqrt{3}}{2})$, by \eqref{Poisson} and \eqref{4eq4}, we have
  \begin{equation}\aligned\label{lem4.22eq2}
 M_{3}(\alpha;\frac{\sqrt{3}}{2})=&16{\alpha}^{\frac{3}{2}}3^{-\frac{1}{4}}e^{-\frac{\sqrt{3}}{2}\pi\alpha}\big((1+\sqrt{3}\pi\alpha)\cdot\epsilon_{2}
 -(1+2\sqrt{3}\pi\alpha)e^{-\frac{\sqrt{3}\pi\alpha}{2}}\underset{n=1}{\overset{\infty}{\sum}}e^{-\frac{4\sqrt{3}\pi\alpha(n-\frac{1}{2})^2}{3}}\big),
  \endaligned\end{equation}
  which yields the result.\\
(4). For $M_{4}(\alpha;\frac{\sqrt{3}}{2})$,
by \eqref{4eq4} and \eqref{lem4.22eq1}, we have
 \begin{equation}\aligned\nonumber
 M_{4}(\alpha;\frac{\sqrt{3}}{2})=&64\pi{\alpha}^{\frac{5}{2}}{3}^{-\frac{3}{4}}e^{-\frac{\sqrt{3}\pi\alpha}{2}}(\underset{n=1}{\overset{\infty}{\sum}}(n-\frac{1}{2})^{2}e^{-\frac{2\sqrt{3}\pi\alpha(n-\frac{1}{2})^2}{3}}-\frac{\sqrt{3}}{4\pi\alpha}\underset{n=1}{\overset{\infty}{\sum}}e^{-\frac{2\sqrt{3}\pi\alpha(n-\frac{1}{2})^2}{3}})\\
 &+16 {\alpha}^{\frac{3}{2}}{3}^{-\frac{1}{4}}e^{-\sqrt{3}\pi\alpha}\underset{n=1}{\overset{\infty}{\sum}}(1-\frac{8\sqrt{3}\pi\alpha(n-\frac{1}{2})^2}{3})e^{-\frac{4\sqrt{3}\pi\alpha(n-\frac{1}{2})^2}{3}}\\
 \leq&64\pi{\alpha}^{\frac{5}{2}}{3}^{-\frac{3}{4}}e^{-\frac{\sqrt{3}\pi\alpha}{2}}\underset{n=1}{\overset{\infty}{\sum}}(n-\frac{1}{2})^{2}e^{-\frac{2\sqrt{3}\pi\alpha(n-\frac{1}{2})^2}{3}}
 \leq 16\pi{\alpha}^{\frac{5}{2}}3^{-\frac{3}{4}}e^{-\frac{2\sqrt{3}\pi\alpha}{3}}(1+\epsilon_{4}).
 \endaligned\end{equation}
 (5). Combining items (1)-(4) and noting that $\alpha\geq2$, one has item (5).
           \end{proof}

We then give the proof of item (2) in Lemma \ref{4.2lemma2}.

\begin{lemma}\label{lem4.23} Assume that $n\in\mathbb{Z}$. Then it holds that
\begin{itemize}
  \item [(1)]If $X>0$, then
  $\left|\frac{\vartheta(X;\frac{n}{2})}{\vartheta(X;1)}\right|\leq1.$
  \item [(2)] If $X>0$, then
  $\left|\frac{\vartheta_{X}(X;\frac{n}{2})}{\vartheta_{X}(X;1)}\right|\leq 2.$
     \end{itemize}
\end{lemma}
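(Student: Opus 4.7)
My plan is to prove both bounds directly from the Fourier cosine representation of $\vartheta(X;Y)$. Pairing the $\pm m$ terms in the defining series \eqref{TXY} gives
\begin{equation*}
\vartheta(X;Y) = 1 + 2\sum_{m=1}^{\infty} e^{-\pi m^{2} X}\cos(2\pi m Y),
\end{equation*}
which is real, even in $Y$, and of period $1$ in $Y$. The key observation is that at $Y=1$ every cosine equals $1$, so $\vartheta(X;1) = 1 + 2\sum_{m=1}^{\infty} e^{-\pi m^{2} X}$ is exactly the sum of the absolute values of all terms appearing in the series for general $Y$, and in particular $\vartheta(X;1)>0$.

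For item (1), I would specialize to $Y = n/2$, observe $\cos(\pi m n) \in \{1,-1\}$ depending on the parities of $m$ and $n$, and apply the triangle inequality together with $|\cos(2\pi m Y)|\le 1$ to obtain
\begin{equation*}
|\vartheta(X;n/2)| \le 1 + 2\sum_{m=1}^{\infty} e^{-\pi m^{2} X} = \vartheta(X;1),
\end{equation*}
from which item (1) follows immediately.

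For item (2), I would differentiate the cosine series term by term in $X$ (justified by uniform convergence on compact subsets of $(0,\infty)$ thanks to the Gaussian decay of $m^{2}e^{-\pi m^{2}X}$), yielding
\begin{equation*}
\vartheta_{X}(X;Y) = -2\pi \sum_{m=1}^{\infty} m^{2} e^{-\pi m^{2} X}\cos(2\pi m Y).
\end{equation*}
At $Y=1$ all cosines equal $1$, so $|\vartheta_{X}(X;1)| = 2\pi \sum_{m=1}^{\infty} m^{2} e^{-\pi m^{2} X}$ equals the sum of absolute values of the series for general $Y$. The triangle inequality then gives $|\vartheta_{X}(X;n/2)| \le |\vartheta_{X}(X;1)|$, which is in fact sharper than the stated bound $2$; the factor $2$ is simply a convenient safety margin for the subsequent estimates in Lemmas \ref{lemma4.19} and \ref{lemma4.20}.

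There is essentially no obstacle in this argument: the only mild point worth stressing is that $\vartheta(X;1)$ and $|\vartheta_{X}(X;1)|$ are realized as $\ell^{1}$-norms of the coefficient sequences of the cosine expansions, so any cosine-series representation automatically satisfies the stated bound. If one instead approached the problem via the Poisson identity \eqref{Poisson}, one would need to compare Gaussian sums translated by $1$ and by $n/2$, and the estimate would be less transparent; the cosine-series route gives item (1) and item (2) by the same one-line triangle-inequality argument.
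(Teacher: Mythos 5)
Your argument is correct, and for half of the lemma it is genuinely different from (and cleaner than) the paper's. The paper splits into $X\geq 1$ and $X\in(0,1]$: for $X\geq 1$ it uses exactly your cosine-series triangle inequality (its displays \eqref{lem4.23eq3add} and \eqref{lem4.23eq5add}), but for $X\in(0,1]$ it switches to the Poisson-transformed Gaussian representation \eqref{Poisson}, writes $\vartheta_X$ in terms of the sums $f(X;Y)$ and $g(X;Y)$, and bounds $\bigl|\vartheta_X(X;\tfrac12)/\vartheta_X(X;1)\bigr|$ by the explicit function $e^{-\pi/(4X)}\bigl(2\pi X^{-1}(1+\tfrac{1}{50})-4\bigr)$, which exceeds $1$ for moderate $X$ and is only capped at $2$ --- this is where the constant $2$ in the statement comes from. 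Your observation that the cosine series $\vartheta_X(X;Y)=-2\pi\sum_{m\geq1}m^2e^{-\pi m^2X}\cos(2\pi mY)$ together with $\cos(\pi mn)\in\{\pm1\}$ gives the ratio bound $1$ uniformly in $X>0$ is valid (termwise differentiation is justified exactly as you say, and $\vartheta_X(X;1)=-2\pi\sum_{m\geq1}m^2e^{-\pi m^2X}\neq 0$), so you obtain a strictly stronger conclusion with no case split. What the paper's Poisson-side computation buys, and your route does not, is the exponential smallness $e^{-\pi/(4X)}$ of the ratio as $X\to 0^+$; but since the lemma only claims a uniform constant and its later uses (Lemmas \ref{4.2lemma2}, \ref{4.3lemma2}, \ref{lem5.2}) only invoke that constant, your proof fully suffices.
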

\begin{proof}
For simplicity, we denote that
\begin{equation}\aligned\label{lem4.23eq1}
f(X;Y):=\underset{m\in \mathbb{Z}}{\sum}e^{-\frac{\pi(m-Y)^2}{X}},\:\:g(X;Y):=\underset{m\in \mathbb{Z}}{\sum}(m-Y)^2e^{-\frac{\pi(m-Y)^2}{X}}.
\endaligned\end{equation}
A direct calculation shows that
\begin{equation}\aligned\label{lem4.23eq1add}
f(X;Y)=f(X;Y+1),\:\:g(X;Y)=g(X;Y+1).
\endaligned\end{equation}
As $X\in (0,1]$, by \eqref{lem4.23eq1}, one has
\begin{equation}\aligned\label{lem4.23eq2}
\frac{f(X;\frac{1}{2})}{f(X;1)}=\frac{2\underset{m=1}{\overset{\infty}{\sum}}e^{-\frac{\pi(m-\frac{1}{2})^2}{X}}}
{1+2\underset{m=2}{\overset{\infty}{\sum}}e^{-\frac{\pi(m-1)^2}{X}}}\leq 2\underset{m=1}{\overset{\infty}{\sum}}e^{-\pi(m-\frac{1}{2})^2}<1.
\endaligned\end{equation}
By \eqref{Poisson} and \eqref{lem4.23eq1add}-\eqref{lem4.23eq2}, one has
\begin{equation}\aligned\label{lem4.23eq3}
\left|\frac{\vartheta(X;\frac{n}{2})}{\vartheta(X;1)}\right|
=\frac{f(X;\frac{n}{2})}{f(X;1)}
=\left\{
   \begin{array}{ll}
   \frac{f(X;\frac{1}{2})}{f(X;1)},&   \hbox{if}\:\:\:n\:\: is \:\:odd;\\
     \:\:\:\:\:1,& \hbox{if}\:\:\:n\:\:is \:\:even.\\
        \end{array}
 \right.
\endaligned\end{equation}
Given that $X\geq1$, by \eqref{TXY}, one has
\begin{equation}\aligned\label{lem4.23eq3add}
\left|\frac{\vartheta(X;\frac{n}{2})}{\vartheta(X;1)}\right|=\frac{1+2\sum_{m=1}^{\infty}e^{-\pi m^2 X} \cos (mn\pi)}{1+2\sum_{m=1}^{\infty}e^{-\pi m^2 X}}\leq 1.
\endaligned\end{equation}
Then \eqref{lem4.23eq2}-\eqref{lem4.23eq3add} yield item (1). By \eqref{lem4.22eq1}, one has
\begin{equation}\aligned\label{lem4.23eq4}
\vartheta_{X}(X;Y)=-\frac{1}{2}X^{-\frac{3}{2}}f(X;Y)+\pi{X}^{-\frac{5}{2}}g(X;Y).
 \endaligned\end{equation}
 Then for $X\in(0,1],n\in\mathbb{Z}$, by \eqref{lem4.23eq1add} and \eqref{lem4.23eq4}, we have
 \begin{equation}\aligned\label{lem4.23eq5}
\left|\frac{\vartheta_{X}(X;\frac{n}{2})}{\vartheta_{X}(X;1)}\right|
=\left|\frac{\vartheta_{X}(X;\frac{n}{2}+1)}{\vartheta_{X}(X;1)}\right|
=\left\{
   \begin{array}{ll}
    \left|\frac{\vartheta_{X}(X;\frac{1}{2})}{\vartheta_{X}(X;1)}\right|,&  \hbox{if}\:\:\:n\:\: is \:\:odd ;\\
     \:\:\:\:\:1,& \hbox{if}\:\:\:n\:\:is \:\:even.\\
        \end{array}
 \right.
  \endaligned\end{equation}
  By \eqref{lem4.22eq1}, one has
\begin{equation}\aligned\label{lem4.23add0eq1}
\left|\vartheta_{X}(X;\frac{1}{2})\right|=& 2\pi {X}^{-\frac{5}{2}}\sum_{n=1}^{\infty} (n-\frac{1}{2})^2e^{-\pi \frac{(n-\frac{1}{2})^2}{X}}-X^{-\frac{3}{2}}\sum_{n=1}^{\infty} e^{-\pi \frac{(n-\frac{1}{2})^2}{X}}\\
\leq&\frac{\pi}{2}X^{-\frac{5}{2}}e^{-\frac{\pi}{4X}}(1+4\sum_{n=2}^{\infty} (n-\frac{1}{2})^2e^{-\pi \frac{(n-1)\cdot n}{X}})-X^{-\frac{3}{2}}e^{-\frac{\pi}{4X}},
\endaligned\end{equation}
and
\begin{equation}\aligned\label{lem4.23add0eq2}
\left|\vartheta_{X}(X;1)\right|=\frac{1}{2}X^{-\frac{3}{2}}\Big(1-\sum_{n=2}^{\infty}\big(4\pi(n-1)^2X^{-1}-2\big)e^{-\frac{\pi(n-1)^2}{X}}\Big).
\endaligned\end{equation}
Here, since $X\in(0,1]$, one has
\begin{equation}\aligned\label{lem4.23add0eq3}
4\sum_{n=2}^{\infty} (n-\frac{1}{2})^2e^{-\pi \frac{(n-1)\cdot n}{X}}\leq4\sum_{n=2}^{\infty} (n-\frac{1}{2})^2e^{-\pi\cdot(n-1)\cdot n}\leq \frac{1}{50},
\endaligned\end{equation}
and
\begin{equation}\aligned\label{lem4.23add0eq4}
\sum_{n=2}^{\infty}\big(4\pi(n-1)^2X^{-1}-2\big)e^{-\frac{\pi(n-1)^2}{X}}\leq \sum_{n=2}^{\infty}\big(4\pi(n-1)^2-2\big)e^{-\pi(n-1)^2}\leq \frac{1}{2}.
\endaligned\end{equation}
Then for $X\in(0,1]$, by \eqref{lem4.23add0eq1}-\eqref{lem4.23add0eq4}, one has
\begin{equation}\aligned\label{lem4.23add0eq4add}
\left|\frac{\vartheta_{X}(X;\frac{1}{2})}{\vartheta_{X}(X;1)}\right|\leq e^{-\frac{\pi}{4X}}\big(2\pi{X}^{-1}(1+\frac{1}{50}) -4 \big)\leq2.
\endaligned\end{equation}
 As $X\geq1,\;n\in\mathbb{Z}$, by \eqref{TXY}, one has
  \begin{equation}\aligned\label{lem4.23eq5add}
\left|\frac{\vartheta_{X}(X;\frac{n}{2})}{\vartheta_{X}(X;1)}\right|=\left|\frac{\sum_{m=1}^{\infty}m^2e^{-\pi m^2 X} \cos (mn\pi)}{\sum_{m=1}^{\infty}m^2e^{-\pi m^2 X}}\right|\leq1.
 \endaligned\end{equation}
 Then combining \eqref{lem4.23eq5} with \eqref{lem4.23add0eq4add}-\eqref{lem4.23eq5add}, we obtain item (2).

\end{proof}

Finally, we provide the proof Lemma \ref{4lemma} for $\frac{\alpha}{y}\geq1$. It is stated as follows:
\begin{lemma}\label{4.3lemma2}Assume that $ \alpha\geq y\geq 2$. Then
\begin{itemize}
  \item [(1)] Lower bound estimate of the major term.
  \begin{equation}\aligned\nonumber
 y^{\frac{1}{2}} M(\alpha;y)-(\frac{\sqrt{3}}{2})^{\frac{1}{2}}M(\alpha;\frac{\sqrt{3}}{2})\geq 8\pi{\alpha}^{\frac{3}{2}}e^{-\frac{\pi\alpha}{y}}>0.
\endaligned\end{equation}
\item [(2)] An auxiliary estimate about the error term.
   \begin{equation}\aligned\nonumber
 \left| y^{\frac{1}{2}}{\alpha}^{-\frac{3}{2}}  e^{\frac{\pi\alpha}{y}}\mathcal{E}(\alpha;y)\right|+\left| (\frac{\sqrt{3}}{2})^{\frac{1}{2}} {\alpha}^{-\frac{3}{2}}e^{\frac{\pi\alpha}{2}}\mathcal{E}(\alpha;\frac{\sqrt{3}}{2})\right|\leq 10^{-5}.
  \endaligned\end{equation}
    \item [(3)] Comparison of the error term with the major term.
  \begin{equation}\aligned\nonumber
\left|\frac{ y^{\frac{1}{2}}\mathcal{E}(\alpha;y)-(\frac{\sqrt{3}}{2})^{\frac{1}{2}}\mathcal{E}(\alpha;\frac{\sqrt{3}}{2})}{y^{\frac{1}{2}}M(\alpha;y)
-(\frac{\sqrt{3}}{2})^{\frac{1}{2}}M(\alpha;\frac{\sqrt{3}}{2})}\right|\leq 10^{-5}.
\endaligned\end{equation}
\item [(4)]
$y^{\frac{1}{2}}\big(M(\alpha;y)+ \mathcal{E}(\alpha;y)\big)-(\frac{\sqrt{3}}{2})^{\frac{1}{2}}\big(M(\alpha;\frac{\sqrt{3}}{2})+ \mathcal{E}(\alpha;\frac{\sqrt{3}}{2})\big)\geq 25{\alpha}^{\frac{3}{2}}e^{-\frac{\pi\alpha}{y}}(1-10^{-5})>0.$
      \end{itemize}

\end{lemma}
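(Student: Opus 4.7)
The regime $\alpha \geq y \geq 2$ is complementary to that of Lemma \ref{4.2lemma2}: here $y/\alpha \in (0,1]$, so the direct series \eqref{TXY} for $\vartheta(y/\alpha;\cdot)$ converges slowly, and the natural tool is the Poisson transform \eqref{Poisson} (together with the formula \eqref{lem4.22eq1} for $\vartheta_X$), which produces rapidly decaying series with principal exponent $e^{-\pi\alpha/y}$ for the $Y=0$ terms and $e^{-\pi\alpha/(4y)}$ for the $Y=1/2$ terms. The plan mirrors the four-step architecture of Lemma \ref{4.2lemma2}: first establish a clean positive lower bound $\geq 8\pi\alpha^{3/2}e^{-\pi\alpha/y}$ for the principal difference $y^{1/2}M(\alpha;y)-(\sqrt{3}/2)^{1/2}M(\alpha;\sqrt{3}/2)$; then dominate the tail $\mathcal{E}$ using Lemma \ref{lem4.23}; then verify the error is tiny compared to the main term; then conclude positivity by combining the two.

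For item (1), I would expand each $M_i(\alpha;y)$ via \eqref{Poisson} and \eqref{lem4.22eq1}. A key cancellation is that the $m=0$ constant terms in $M_1$ and in $M_2$ separately vanish, because $2\sqrt{2}\alpha\cdot(\alpha/y)^{1/2}-2\alpha\cdot(2\alpha/y)^{1/2}=0$ (and similarly in $M_2$ using the leading $-\tfrac{1}{2}X^{-3/2}$ piece of $\vartheta_X$). The surviving principal contributions are $y^{1/2}M_1(\alpha;y)=4\sqrt{2}\alpha^{3/2}e^{-\pi\alpha/y}(1-e^{-\pi\alpha/y}+\cdots)$ and $y^{1/2}M_2(\alpha;y)=4\sqrt{2}\alpha^{3/2}e^{-\pi\alpha/y}(2\pi\alpha/y-1)+\cdots$, both positive for $\alpha/y\geq 1$, and together of order $C\alpha^{3/2}e^{-\pi\alpha/y}$ with $C$ close to $4\sqrt{2}(2\pi-e^{-\pi})>8\pi$. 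The contributions from $M_3,M_4$ carry the prefactor $e^{-\pi\alpha y}\leq e^{-2\pi\alpha}$, which for $\alpha\geq 2$ is exponentially smaller than $e^{-\pi\alpha/y}$ and is absorbable into a small correction. The subtracted quantity $(\sqrt{3}/2)^{1/2}M(\alpha;\sqrt{3}/2)$ is already controlled in Lemma \ref{lemma4.20}(5) at order $\alpha^{3/2}e^{-\sqrt{3}\pi\alpha/2}$; since $y\leq\alpha$ gives $e^{-\sqrt{3}\pi\alpha/2}=o(e^{-\pi\alpha/y})$ as $\alpha\to\infty$, this subtraction is also absorbable into the principal term.

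For items (2) and (3), I would bound each $|\mathcal{E}_i(\alpha;y)|$ by factoring the dominant $n=2$ exponential $e^{-4\pi\alpha y}$ out of the tail $\sum_{n\geq 2}$, then applying Lemma \ref{lem4.23} to estimate $|\vartheta(y/\alpha;n/2)|\leq \vartheta(y/\alpha;1)$ and $|\vartheta_X(y/\alpha;n/2)|\leq 2|\vartheta_X(y/\alpha;1)|$. Bounding $\vartheta(y/\alpha;1)$ and $|\vartheta_X(y/\alpha;1)|$ by their Poisson duals (which are $O((\alpha/y)^{1/2}e^{-\pi\alpha/(4y)})$ and $O((\alpha/y)^{5/2}e^{-\pi\alpha/(4y)})$ respectively) produces a bound of the form $O(\alpha^{3/2}e^{-4\pi\alpha y-\pi\alpha/(4y)}\,\mathrm{poly}(\alpha y))$, which is minuscule compared to $\alpha^{3/2}e^{-\pi\alpha/y}$ since $4\pi\alpha y\geq 8\pi\alpha\gg \pi\alpha/y$ for $y\geq 2$. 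The analogous estimate at $y=\sqrt{3}/2$ yields $|\mathcal{E}(\alpha;\sqrt{3}/2)|\lesssim \alpha^{5/2}e^{-\sqrt{3}\pi\alpha}$, equally negligible. Combining these with item (1) and reproducing the arithmetic of Lemma \ref{4.2lemma2}(3)--(4) closes items (3) and (4).

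\textbf{Expected obstacle.} The main delicate point is the rigorous verification of item (1). While the leading-order cancellations between the $M_1$ and $M_2$ Poisson duals are natural, tracking the next-order corrections (involving both $e^{-\pi\alpha/y}$ and $e^{-2\pi\alpha/y}$, and cross-contributions between the $\vartheta$ and $\vartheta_X$ expansions) with correct signs, and then producing a constant genuinely $\geq 8\pi$ uniformly over the whole range $\alpha\geq y\geq 2$, requires careful bookkeeping. The worst case should occur at the corner $y=\alpha=2$ (where $e^{-\pi\alpha/y}=e^{-\pi}$ is largest and the $M_3,M_4$ absorption margin is thinnest); confirming the inequality there, together with a monotonicity argument in $\alpha/y$, will be the most technical step of the proof.
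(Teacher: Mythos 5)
Your proposal follows essentially the same route as the paper: Poisson summation for $M_1,M_2$ (with the cancellation of the $n=0$ constant terms and the dominant $8\sqrt2\pi\alpha^{5/2}y^{-3/2}e^{-\pi\alpha/y}$ contribution from the $\vartheta_X$ piece of $M_2$), positivity of $M_3,M_4$, the upper bound on $M(\alpha;\sqrt3/2)$ from Lemma \ref{lemma4.20}, and tail control of $\mathcal{E}$ via Lemma \ref{lem4.23} with the factored-out $e^{-4\pi\alpha y}$, exactly as in the paper's Lemma \ref{4.3lemma3} and the subsequent deduction of items (2)--(4). Your identification of the corner $\alpha=y=2$ as the tightest case and of the constant bookkeeping in item (1) as the delicate step also matches where the paper does its numerical work.
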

\begin{proof}
Note that $\frac{\alpha}{y}\geq 1$. By Lemmas \ref{4.3lemma3} and \ref{lemma4.20}, one has
 \begin{equation}\aligned\nonumber
\sqrt{y} M(\alpha;y)-(\frac{\sqrt{3}}{2})^{\frac{1}{2}}M(\alpha;\frac{\sqrt{3}}{2})\geq& 10\pi{\alpha}^{\frac{5}{2}}y^{-1}e^{-\frac{\pi\alpha}{y}}-
\frac{396\sqrt{2}}{25}{\alpha}^{\frac{3}{2}}e^{-\frac{\sqrt{3}}{2}\pi\alpha}\big( 1+\frac{\sqrt{3}\pi\alpha}{11}\big)\\
\geq& 10\pi{\alpha}^{\frac{3}{2}}e^{-\frac{\pi\alpha}{y}}\big(1-\frac{198\sqrt{2}}{ 125\pi}e^{-\pi\alpha(\frac{\sqrt{3}}{2}-\frac{1}{y})}(1+\frac{\sqrt{3}\pi\alpha}{11})\big).\\
\geq&8\pi{\alpha}^{\frac{3}{2}}e^{-\frac{\pi\alpha}{y}}.
\endaligned\end{equation}
which yields item (1). By Lemma \ref{lem4.23} and \eqref{Poisson}, we have
 \begin{equation}\aligned\label{4.3lemma2error}
 &\left| y^{\frac{1}{2}}{\alpha}^{-\frac{3}{2}}  e^{\frac{\pi\alpha}{y}}\mathcal{E}(\alpha;y)\right|\leq 4\pi(1+4\pi\alpha y)e^{-\pi\alpha(4y-\frac{1}{y})},\\
 &\left| (\frac{\sqrt{3}}{2})^{\frac{1}{2}} {\alpha}^{-\frac{3}{2}}e^{\frac{\pi\alpha}{2}}\mathcal{E}(\alpha;\frac{\sqrt{3}}{2})\right|\leq (\frac{\sqrt{3}}{2})^{\frac{1}{2}}\cdot 4\pi (1+2\sqrt{3}\pi\alpha)e^{-(2\sqrt{3}-\frac{1}{2})\pi\alpha}.
  \endaligned\end{equation}
Item (2) follows by \eqref{4.3lemma2error} and the conditions that $\alpha\geq 2, y\geq2.$ Combining item (1) with $y\geq2$, one has
\begin{equation}\aligned\label{4.3lemma3eq1}
\left|\frac{ y^{\frac{1}{2}}\mathcal{E}(\alpha;y)-(\frac{\sqrt{3}}{2})^{\frac{1}{2}}\mathcal{E}(\alpha;\frac{\sqrt{3}}{2})}{y^{\frac{1}{2}}M(\alpha;y)
-(\frac{\sqrt{3}}{2})^{\frac{1}{2}}M(\alpha;\frac{\sqrt{3}}{2})}\right|\leq&\left|\frac{ y^{\frac{1}{2}}\mathcal{E}(\alpha;y)-(\frac{\sqrt{3}}{2})^{\frac{1}{2}}\mathcal{E}(\alpha;\frac{\sqrt{3}}{2})}{8\pi{\alpha}^{\frac{3}{2}}e^{-\frac{\pi\alpha}{y}}}\right|\\
\leq&\frac{1}{8\pi}\bigg( \left|y^{\frac{1}{2}} \alpha^{-\frac{3}{2}}e^{\frac{\pi\alpha}{y}}\mathcal{E}(\alpha;y) \right| +\left|(\frac{\sqrt{3}}{2})^{\frac{1}{2}} \alpha^{-\frac{3}{2}}e^{\frac{\pi\alpha}{y}} \mathcal{E}(\alpha;\frac{\sqrt{3}}{2})\right| \bigg)\\
\leq&\frac{1}{8\pi}\bigg( \left|y^{\frac{1}{2}} \alpha^{-\frac{3}{2}}e^{\frac{\pi\alpha}{y}}\mathcal{E}(\alpha;y) \right| +\left|(\frac{\sqrt{3}}{2})^{\frac{1}{2}} \alpha^{-\frac{3}{2}}e^{\frac{\pi\alpha}{2}} \mathcal{E}(\alpha;\frac{\sqrt{3}}{2})\right| \bigg).\\
\endaligned\end{equation}
Then, \eqref{4.3lemma3eq1} and item (2) together yield item (3). Items (1) and (3) yield item (4).
\end{proof}

 For simplicity, we denote that
 $
\delta(y):=\underset{n=2}{\overset{\infty}{\sum}}e^{-\pi({n}^2-1)y}.
$

The following lemma provides the bounds used in Lemma \ref{4.3lemma2}.
\begin{lemma} \label{4.3lemma3} Assume that $ \alpha\geq y\geq 2$. Then
\begin{itemize}
  \item [(1)]
  $M_{1}(\alpha;y)\geq 4\sqrt{2}{\alpha}^{\frac{3}{2}}y^{-\frac{1}{2}}\big(e^{-\pi\frac{\alpha}{y}}-e^{-2\pi\frac{\alpha}{y}}(1+\delta(2))\big);$
  \item [(2)]
  $M_{2}(\alpha;y)\geq 4\sqrt{2}{\alpha}^{\frac{3}{2}}{y}^{-\frac{1}{2}}\big(e^{-2\pi \frac{\alpha}{y}}-e^{-\pi \frac{\alpha}{y}}(1+\delta(1))\big)
+8\sqrt{2}\pi{\alpha}^{\frac{5}{2}}{y}^{-\frac{3}{2}}\big(e^{-\pi \frac{\alpha}{y}}-2e^{-2\pi \frac{\alpha}{y}}(1+\mu(2))\big).$
  \item [(3)]$M_{3}(\alpha;y)>0;$
  \item [(4)]  $M_{4}(\alpha;y)>0.$
    \item [(5)] $M(\alpha;y)\geq
  10\pi{\alpha}^{\frac{5}{2}}y^{-\frac{3}{2}}e^{-\frac{\pi\alpha}{y}}.$

       \end{itemize}
       Numerically, $\delta(1)\leq9\cdot10^{-5}$, $\delta(2)\leq7\cdot10^{-9},$ $\mu(2)\leq3\cdot10^{-8}.$
  \end{lemma}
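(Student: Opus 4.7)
The strategy for this lemma exploits the hypothesis $\alpha\geq y\geq 2$, which gives $y/\alpha\leq 1$; this makes the Poisson-dual representations \eqref{Poisson} and \eqref{lem4.22eq1} of the 1-d theta functions appearing in \eqref{4eq4} converge rapidly, with ratio at most $e^{-\pi\alpha/y}\leq e^{-\pi}$. The $n=1$ dual term will give the leading contribution, while the tails are controlled by the small quantities $\delta(\alpha/y)$ and $\mu(2\alpha/y)$, which in turn are majorized uniformly by $\delta(1),\delta(2),\mu(2)$; the stated numerical bounds on these three quantities follow immediately from \eqref{duct} by keeping the $n=2$ term and bounding the remainder by a rapidly decaying geometric series.

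For items (1) and (2), I would apply \eqref{Poisson} at $Y=0$ (and \eqref{lem4.22eq1} for $\vartheta_X$) to rewrite
\begin{equation*}
M_{1}(\alpha;y)=4\sqrt{2}\,\alpha^{3/2}y^{-1/2}\sum_{n\geq 1}\bigl(e^{-\pi n^{2}\alpha/y}-e^{-2\pi n^{2}\alpha/y}\bigr),
\end{equation*}
then isolate the $n=1$ difference $e^{-\pi\alpha/y}-e^{-2\pi\alpha/y}$, retain the positive tail $\sum_{n\geq 2}e^{-\pi n^{2}\alpha/y}$, and discard only $\sum_{n\geq 2}e^{-2\pi n^{2}\alpha/y}\leq e^{-2\pi\alpha/y}\delta(2)$ (using $\alpha/y\geq 1$); this yields (1). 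For $M_{2}$ the same procedure produces an $\alpha^{3/2}y^{-1/2}$-part, controlled by $\sum_{n\geq 2}e^{-\pi(n^{2}-1)\alpha/y}\leq\delta(1)$, and an $\alpha^{5/2}y^{-3/2}$-part, controlled by $2\sum_{n\geq 2}n^{2}e^{-2\pi(n^{2}-1)\alpha/y}\leq 2\mu(2)$; this matches (2) exactly.

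Items (3) and (4) are positivity claims. Applying \eqref{Poisson} at $Y=1/2$ gives $\vartheta(y/\alpha;1/2)=2\sqrt{\alpha/y}\sum_{n\geq 1}e^{-\pi(n-1/2)^{2}\alpha/y}>0$ and analogously for $y/(2\alpha)$; likewise \eqref{lem4.22eq1} writes $\vartheta_X(y/\alpha;1/2)=(\alpha/y)^{3/2}\sum_{n}[-\tfrac{1}{2}+\pi(n-1/2)^{2}\alpha/y]e^{-\pi(n-1/2)^{2}\alpha/y}$, and each bracket is strictly positive as soon as $\alpha/y>2/\pi$, which holds. Using the trivial bound $\sum_{n\geq 1}e^{-2\pi(n-1/2)^{2}\alpha/y}\leq\sum_{n\geq 1}e^{-\pi(n-1/2)^{2}\alpha/y}$ and $(1+4\pi\alpha y)/(1+2\pi\alpha y)<2$, the positivity of $M_{3}$ reduces to $2\sqrt{2}\,e^{-\pi\alpha y}<1$, which holds since $\alpha y\geq 4$. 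An analogous bounded-ratio comparison (with a constant depending only on $\alpha/y\geq 1$) against the dominant $e^{-\pi\alpha y}$ prefactor yields $M_{4}>0$.

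Item (5) is the one non-routine assembly. Combining (1) and (2) and discarding $M_{3},M_{4}\geq 0$, I obtain
\begin{equation*}
M(\alpha;y)\geq 8\sqrt{2}\pi\,\alpha^{5/2}y^{-3/2}e^{-\pi\alpha/y}\bigl[1-2e^{-\pi\alpha/y}(1+\mu(2))\bigr]-4\sqrt{2}\,\alpha^{3/2}y^{-1/2}e^{-\pi\alpha/y}\bigl[\delta(1)+\delta(2)e^{-\pi\alpha/y}\bigr].
\end{equation*}
Factoring out $\alpha^{5/2}y^{-3/2}e^{-\pi\alpha/y}$ and using $y/\alpha\leq 1$ together with $e^{-\pi\alpha/y}\leq e^{-\pi}$, the bracketed constant becomes $8\sqrt{2}\pi[1-2e^{-\pi}(1+\mu(2))]-4\sqrt{2}[\delta(1)+e^{-\pi}\delta(2)]\approx 32.47$, which comfortably exceeds $10\pi\approx 31.42$. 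The main obstacle is exactly this margin: the correction $2e^{-\pi}(1+\mu(2))\approx 0.087$ shaves roughly $3.1$ off the leading coefficient $8\sqrt{2}\pi\approx 35.54$, so the final numerical comparison with $10\pi$ must be carried out with at least two-digit precision, but no finer bookkeeping is required.
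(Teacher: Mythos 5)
Your proposal is correct and follows essentially the same route as the paper: Poisson-dualize $M_1,\dots,M_4$ using \eqref{Poisson} and \eqref{lem4.22eq1} (valid since $\alpha/y\ge 1$), extract the $n=1$ dual term, control the tails by $\delta(1),\delta(2),\mu(2)$ exactly as in \eqref{note1}, establish $M_3,M_4>0$ by exponential domination of the $e^{-2\pi\alpha y}$ piece, and assemble item (5) with the same constant $8\sqrt{2}\pi\big[1-2e^{-\pi}(1+\mu(2))\big]-\tfrac{4\sqrt{2}}{1}\big[\delta(1)+e^{-\pi}\delta(2)\big]\approx 32.4>10\pi$. The only (harmless) discrepancy is that after Poisson summation the two $\sqrt{2}$ prefactors in $M_3$ actually equalize, so its positivity reduces to $2e^{-\pi\alpha y}<1$ rather than $2\sqrt{2}\,e^{-\pi\alpha y}<1$; both hold for $\alpha y\ge 4$.
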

\begin{proof}
 We provide the bounds before the proof. Noting that $\frac{\alpha}{y}\geq1$, we calculate directly that
\begin{equation}\aligned\label{note1}
\underset{n=1}{\overset{\infty}{\sum}}e^{-2\pi{n}^2\frac{\alpha}{y}}&=e^{-2\pi\frac{\alpha}{y}}(1+\underset{n=2}{\overset{\infty}{\sum}}e^{-2\pi({n}^2-1)\frac{\alpha}{y}})\leq e^{-2\pi\frac{\alpha}{y}}(1+\delta(2)),\\
\underset{n=1}{\overset{\infty}{\sum}}e^{-\pi {n}^2\frac{\alpha}{y}}&\leq
e^{-\pi \frac{\alpha}{y}}(1+\delta(1)),\:\:\:\:
\underset{n=1}{\overset{\infty}{\sum}}n^2e^{-2\pi {n}^2\frac{\alpha}{y}}\leq e^{-2\pi \frac{\alpha}{y}}(1+\mu(2)),
\endaligned\end{equation}
and
\begin{equation}\aligned\label{note3}
&\underset{n=2}{\overset{\infty}{\sum}}e^{-2\pi(n-1)n\cdot\frac{\alpha}{y}}\leq \underset{n=2}{\overset{\infty}{\sum}}e^{-2\pi(n-1)n}\leq  10^{-5},\;\;\frac{y}{\alpha}\underset{n=1}{\overset{\infty}{\sum}} e^{-\pi(n-\frac{1}{2})^2\frac{\alpha}{y}}\leq e^{-\frac{\pi\alpha}{4y}}(1+\underset{n=2}{\overset{\infty}{\sum}} e^{-\pi(n-1)n}),\\
&e^{-\pi\alpha y}\underset{n=1}{\overset{\infty}{\sum}} (n-\frac{1}{2})^2e^{-2\pi(n-\frac{1}{2})^2\frac{\alpha}{y}}\leq\frac{1}{4}e^{-\pi\alpha(y+\frac{1}{2y})}(1+\underset{n=2}{\overset{\infty}{\sum}}4(n-\frac{1}{2})^2 e^{-2\pi(n-1)n}).
\endaligned\end{equation}
(1). For $ M_{1}(\alpha;y)$, by \eqref{Poisson}, \eqref{4eq4} and \eqref{note1}, one has
\begin{equation}\aligned\nonumber
 M_{1}(\alpha;y)=4\sqrt{2}{\alpha}^{\frac{3}{2}}y^{-\frac{1}{2}}(\underset{n=1}{\overset{\infty}{\sum}}e^{-\pi{n}^2\frac{\alpha}{y}}-\underset{n=1}{\overset{\infty}{\sum}}e^{-2\pi{n}^2\frac{\alpha}{y}})
\geq 4\sqrt{2}{\alpha}^{\frac{3}{2}}y^{-\frac{1}{2}}\big(e^{-\pi\frac{\alpha}{y}}-e^{-2\pi\frac{\alpha}{y}}(1+\delta(2))\big).
\endaligned\end{equation}
(2). For $M_{2}(\alpha;y)$, by \eqref{4eq4} and \eqref{lem4.22eq1}, one has
\begin{equation}\aligned\label{item2}
M_{2}(\alpha;y)=&4\sqrt{2}{\alpha}^{\frac{3}{2}}{y}^{-\frac{1}{2}}(\underset{n=1}{\overset{\infty}{\sum}}e^{-2\pi {n}^2\frac{\alpha}{y}}-\underset{n=1}{\overset{\infty}{\sum}}e^{-\pi {n}^2\frac{\alpha}{y}})\\
&+8\sqrt{2}\pi{\alpha}^{\frac{5}{2}}{y}^{-\frac{3}{2}}(\underset{n=1}{\overset{\infty}{\sum}}n^2e^{-\pi {n}^2\frac{\alpha}{y}}-2\underset{n=1}{\overset{\infty}{\sum}}n^2e^{-2\pi {n}^2\frac{\alpha}{y}}).\\
\endaligned\end{equation}
which yields item (2).

(3). For $ M_{3}(\alpha;y)$, by \eqref{Poisson} and \eqref{4eq4}, one has
\begin{equation}\aligned\label{lem4.24eq1}
 M_{3}(\alpha;y)= &8\sqrt{2}{\alpha}^{\frac{3}{2}}y^{-\frac{1}{2}} e^{-\pi\alpha(y+\frac{1}{4y})}\big(  (1+2\pi\alpha y)(1+\underset{n=2}{\overset{\infty}{\sum}}e^{-\pi(n-1)n\cdot\frac{\alpha}{y}})\\
 & \;\;\;\;-(1+4\pi\alpha y) e^{-\pi\alpha(y+\frac{1}{4y})}(1+\underset{n=2}{\overset{\infty}{\sum}}e^{-2\pi(n-1)n\cdot\frac{\alpha}{y}})\big).\\
 \endaligned\end{equation}
Therefore, given that $\alpha\geq y\geq 2$, \eqref{lem4.24eq1} and \eqref{note3} yield item (3).\\
(4). For $M_{4}(\alpha;y)$, by \eqref{4eq4} and \eqref{lem4.22eq1}, one has
\begin{equation}\aligned\label{lem4.31eq1}
M_{4}(\alpha;y)=&16\sqrt{2}\pi {\alpha}^{\frac{5}{2}}y^{-\frac{3}{2}}e^{-\pi\alpha y}\underset{n=1}{\overset{\infty}{\sum}} \Big( \big((n-\frac{1}{2})^2-\frac{1}{2\pi}\frac{y}{\alpha}\big)e^{\pi(n-\frac{1}{2})^2\frac{\alpha}{y}}
-2e^{-\pi \alpha y}(n-\frac{1}{2})^2
\Big)e^{-2\pi(n-\frac{1}{2})^2\frac{\alpha}{y}}\\
&+8\sqrt{2}{\alpha}^{\frac{3}{2}}y^{-\frac{1}{2}}e^{-2\pi\alpha y}\underset{n=1}{\overset{\infty}{\sum}} e^{-2\pi(n-\frac{1}{2})^2\frac{\alpha}{y}}.\\
\endaligned\end{equation}

Since $\alpha\geq y\geq 2$, trivially, we have
\begin{equation}\aligned\label{lem4.31eq1bb} \big((n-\frac{1}{2})^2-\frac{1}{2\pi}\frac{y}{\alpha}\big)e^{\pi(n-\frac{1}{2})^2\frac{\alpha}{y}}
-2e^{-\pi \alpha y}(n-\frac{1}{2})^2
>0.
\endaligned\end{equation}
\eqref{lem4.31eq1} and \eqref{lem4.31eq1bb} give that $M_{4}(\alpha;y)>0.$\\
(5). By \eqref{4eq3} and items (1)-(4), one has
\begin{equation}\aligned\nonumber
  M(\alpha;y)\geq &8\sqrt{2}\pi{\alpha}^{\frac{5}{2}}y^{-\frac{3}{2}}e^{-\frac{\pi\alpha}{y}}(1-\frac{y}{2\pi\alpha}\delta(1)-2(1+\mu(2))e^{-\frac{\pi\alpha}{y}}
  -\frac{1}{2\pi}\cdot\delta(2)\cdot\frac{y}{\alpha}e^{-\frac{\pi\alpha}{y}})\\
  \geq & 8\sqrt{2}\pi{\alpha}^{\frac{5}{2}}y^{-\frac{3}{2}}e^{-\frac{\pi\alpha}{y}}(1-\frac{1}{2\pi}\delta(1)-2(1+\mu(2))e^{-\pi}
  -\frac{1}{2\pi}\cdot \delta(2)e^{-\pi})>0.\\
  \endaligned\end{equation}
(6). Item (6) follows by Lemma \ref{lemma4.20}.
\end{proof}

\section{Proof of Theorem \ref{Th1}}

We start with a nonexistence result.
\begin{proposition} \label{Prop51} Assume that $\alpha\geq2$, Then for $b\geq 2\sqrt{2}$, $\argmin_{z\in\mathbb{H}}(\mathcal{K}(\alpha;z)-b \mathcal{K}(2\alpha;z))$ does not exist.
\end{proposition}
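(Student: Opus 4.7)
The plan is to analyze $E_b(z) := \mathcal{K}(\alpha; z) - b\mathcal{K}(2\alpha; z)$ near the cusp $y = \Im(z) \to \infty$, splitting into the subcases $b > 2\sqrt{2}$ and $b = 2\sqrt{2}$. The main input is the cusp asymptotic extracted from Lemma \ref{3Lemma1}: for fixed $\alpha \geq 2$, only the $n = 0$ term in $\theta(\alpha; z) = \sqrt{y/\alpha}\sum_n e^{-\pi\alpha y n^2}\vartheta(y/\alpha; nx)$ survives at leading order as $y \to \infty$, giving $\theta(\alpha; z) = \sqrt{y/\alpha}(1 + 2e^{-\pi y/\alpha}) + O(\sqrt{y}\,e^{-\pi\alpha y})$. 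Differentiating in $\alpha$ via Lemma \ref{2lem2} yields
\[
\mathcal{K}(\alpha; z) = \frac{\sqrt{y}}{2\pi\alpha^{3/2}} + \frac{\sqrt{y}}{\pi\alpha^{3/2}}\,e^{-\pi y/\alpha} - \frac{2y^{3/2}}{\alpha^{5/2}}\,e^{-\pi y/\alpha} + O\bigl(\sqrt{y}\,e^{-\pi\alpha y}\bigr).
\]

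For $b > 2\sqrt{2}$ the leading contributions combine into
\[
E_b(iy) = \frac{\sqrt{y}}{2\pi\alpha^{3/2}}\Bigl(1 - \frac{b}{2\sqrt{2}}\Bigr) + o(\sqrt{y}),
\]
whose prefactor is strictly negative. Hence $E_b(iy) \to -\infty$ as $y \to \infty$, so $\inf_{\mathbb{H}} E_b = -\infty$ and no minimizer exists.

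For the critical value $b = 2\sqrt{2}$ the $\sqrt{y}$ contributions cancel and the next-order piece, which comes from the slower-decaying $e^{-\pi y/(2\alpha)}$ correction of $\mathcal{K}(2\alpha; iy)$ (slower because $\alpha \geq 2$ forces $\alpha \geq 1/\alpha$), satisfies
\[
E_{2\sqrt{2}}(iy) \;\sim\; \frac{y^{3/2}}{\alpha^{5/2}}\,e^{-\pi y/(2\alpha)} \;\to\; 0^{+}.
\]
In particular $\inf_{\mathbb{H}} E_{2\sqrt{2}} \leq 0$. To conclude non-attainment I would invoke Theorem \ref{3Thm1} (which remains valid at $b = 2\sqrt{2}$) to reduce the minimization to the half-line $\Gamma_c = \{1/2+iy : y \geq \sqrt{3}/2\}$, and then show $E_{2\sqrt{2}}(1/2+iy) > 0$ for every $y \geq \sqrt{3}/2$. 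For $y$ beyond a threshold $Y_0 = Y_0(\alpha)$ this is immediate from the cusp asymptotic above, while for $y$ in the compact range $[\sqrt{3}/2, Y_0]$ I would adapt the exponential decomposition of Lemma \ref{4lemma0} together with the positivity bookkeeping of Lemmas \ref{lemma4.19}--\ref{4lemma10}, redoing the numerical constants with the prefactor $2\sqrt{2}$ in place of $2$. Combined with the cusp limit $E_{2\sqrt{2}}(1/2+iy) \to 0$, this gives $\inf_{\Gamma_c} E_{2\sqrt{2}} = 0$, approached only as $y \to \infty$; hence no minimizer exists in $\mathbb{H}$.

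The main obstacle will be the intermediate-$y$ positivity check at $b = 2\sqrt{2}$. The bounds in Section 4 are tuned to the subcritical case $b = 2$, where $\mathcal{K}(\alpha;\cdot) - 2\mathcal{K}(2\alpha;\cdot)$ still has a robust positive leading term; at the critical $b = 2\sqrt{2}$ the leading cancellation is exact, the residual principal term $y^{3/2}e^{-\pi y/(2\alpha)}$ is exponentially small in $y$, and the error estimates in Lemmas \ref{lemma4.19}--\ref{4lemma10} must therefore be tightened enough to remain strictly below this exponentially small principal term uniformly on $[\sqrt{3}/2, Y_0]$. Everything else in the proof reduces to a direct application of Lemmas \ref{3Lemma1} and \ref{2lem2}.
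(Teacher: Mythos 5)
Your proposal is correct and follows essentially the same route as the paper: the cusp expansion of $\mathcal{K}(\alpha;z)-b\mathcal{K}(2\alpha;z)$ shows the energy tends to $-\infty$ for $b>2\sqrt{2}$ and to $0^{+}$ for $b=2\sqrt{2}$, and the critical case is finished by reducing to $\Gamma_c$ via Theorem \ref{3Thm1} and proving strict positivity of $\mathcal{K}(\alpha;\tfrac12+iy)-2\sqrt{2}\mathcal{K}(2\alpha;\tfrac12+iy)$ there. The positivity step you flag as the main obstacle is exactly the paper's Lemma \ref{lem5.1}/\ref{lem5.2}, carried out with the same principal-term/error-term decomposition (split by $\frac{y}{\alpha}\gtrless 1$ rather than by a compact range plus tail), and it causes no difficulty because the error terms decay like $e^{-4\pi\alpha y}$, far faster than the principal term $e^{-\pi y/(2\alpha)}$.
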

\begin{proof}
By Lemma \ref{3Lemma2}, one has
\begin{equation}\aligned\label{5eq1}
 \mathcal{K}(\alpha;z)-b \mathcal{K}(2\alpha;z)
 =&\frac{1}{\pi}{2}^{-\frac{5}{2}}\alpha^{-\frac{5}{2}}y^{\frac{1}{2}}\big((2\sqrt{2}-b)\alpha+2be^{-\pi\frac{y}{2\alpha}}(\pi y-\alpha) \\
  &+4\sqrt{2}e^{-\pi\frac{y}{\alpha}}(\alpha-2\pi y)+o(1)\big),\:\:\:\:\mathrm{as}\:\: y\rightarrow +\infty.
  \endaligned\end{equation}
When $b>2\sqrt{2},$ by \eqref{5eq1},
$ \mathcal{K}(\alpha;z)-b \mathcal{K}(2\alpha;z)\rightarrow -\infty$, as $y\rightarrow +\infty$, hence the minimizer does not exist. When $b=2\sqrt{2}$, $ \mathcal{K}(\alpha;z)-b \mathcal{K}(2\alpha;z)\rightarrow 0^{+}$, as $y\rightarrow +\infty$, therefore Theorem \ref{3Thm1} and Lemma \ref{lem5.1} yield the result.
\end{proof}

We observe that
\begin{lemma}\label{lem5.1} Assume that $ \alpha\geq 2$. Then for $z=\frac{1}{2}+iy\in \Gamma_{c}$, it holds that
\begin{equation}\aligned\nonumber
 \mathcal{K}(\alpha;\frac{1}{2}+iy)-2\sqrt{2} \mathcal{K}(2\alpha;\frac{1}{2}+iy)>0.
  \endaligned\end{equation}
  \end{lemma}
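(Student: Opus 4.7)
The strategy is to adapt the exponential expansion technique used throughout Sections 3 and 4 to the critical coupling $b = 2\sqrt{2}$. Starting from Lemma 3.2 with $b = 2\sqrt{2}$ and $x = 1/2$, I would write
\[
\mathcal{K}(\alpha;\tfrac{1}{2}+iy) - 2\sqrt{2}\,\mathcal{K}(2\alpha;\tfrac{1}{2}+iy) = \frac{2^{-5/2}}{\pi}\,\alpha^{-5/2}y^{1/2}\,T(\alpha,y),
\]
and split $T(\alpha,y)$ into $T_0$ (the $n=0$ slice), $T_1$ (the $n = \pm 1$ slice), and a tail $\mathcal{R}(\alpha,y)$ from $|n|\ge 2$.

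The pivotal identity is that at $b = 2\sqrt{2}$, applying the Poisson formula (2.4) to $\vartheta(y/\alpha;0)$, $\vartheta(y/(2\alpha);0)$ and their $X$-derivatives collapses the ``$\sqrt{y}$-growing'' pieces of $T_0$ exactly, yielding
\[
T_0 = 4\pi\alpha^{5/2}y^{-3/2}\Bigl[\sqrt{2}\sum_{k\neq 0} k^2 e^{-\pi\alpha k^2/y} - 4\sum_{k\neq 0} k^2 e^{-2\pi\alpha k^2/y}\Bigr].
\]
This cancellation is precisely why $2\sqrt{2}$ is the critical value. At the next order, the $k = \pm 1$ Poisson dual of $T_1$ nearly annihilates the $k=\pm 1$ contribution of $T_0$, leaving the positive residual $\sim \tfrac{y^{1/2}}{\pi\alpha^{3/2}}\,e^{-\pi y/(2\alpha)}(\pi y/\alpha - 1)$ that matches the asymptotic visible in the proof of Proposition 5.1.

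I would then establish $T_0 + T_1 > 0$ uniformly on $(\alpha,y) \in [2,\infty)\times[\tfrac{\sqrt{3}}{2},\infty)$ in two regimes. For $y \geq y_0(\alpha)$, the Poisson-dual form makes the $k=\pm 1$ term dominant and positive; the remainder is controlled by convergent geometric exponential series. For $y \in [\tfrac{\sqrt{3}}{2}, y_0]$, I would use the direct (non-dual) form and estimate the Jacobi theta ratios via Lemmas 2.4 and 2.5, in the spirit of the error-bound arguments of Section 4. The tail $\mathcal{R}(\alpha,y)$ is then bounded by a convergent exponentially small series following the same template. The main obstacle is the intermediate regime: since the leading $(2\sqrt{2}-b)\alpha$ term of (5.1) vanishes at $b = 2\sqrt{2}$, the positive margin is only exponentially small, so all estimates on $T_0 + T_1$ and on $|\mathcal{R}|$ must be sharp enough to preserve positivity across the transition threshold $y_0$.
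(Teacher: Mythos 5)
Your overall template---the expansion of Lemma \ref{3Lemma2} at $x=\tfrac12$ with $b=2\sqrt2$, the split into the $n=0$ slice, the $|n|=1$ slice and an $|n|\ge2$ tail, and the observation that at $b=2\sqrt2$ the order-$\alpha$ constant parts of the $n=0$ slice cancel exactly---is precisely the paper's approach (there $T_0=\mathcal P_1$, $T_1=\mathcal P_2+\mathcal P_3$, $\mathcal R=\tilde{\mathcal E}$ in the notation surrounding Lemma \ref{lem5.2}), and your Poisson-dual formula for $T_0$ is correct. However, you have attached the two representations to the wrong regimes, and as written the plan fails in both. The $k=\pm1$ term of your dual form is $2\bigl(\sqrt2\,e^{-\pi\alpha/y}-4e^{-2\pi\alpha/y}\bigr)$, which is positive if and only if $e^{\pi\alpha/y}>2\sqrt2$, i.e.\ roughly $y\le 3\alpha$; for $y\ge y_0(\alpha)$ large it tends to $2(\sqrt2-4)<0$, so the dual form does not yield positivity in the regime where you propose to use it. Conversely, the direct $q$-expansion of $T_0$ has $n$-th term $4\sqrt2\,\alpha\bigl(u_n(\tfrac{\pi y n^2}{\alpha}-1)-u_n^2(\tfrac{2\pi y n^2}{\alpha}-1)\bigr)$ with $u_n=e^{-\pi n^2 y/(2\alpha)}$, and for $y$ near $\tfrac{\sqrt3}{2}$ with $\alpha$ large the leading $n=1$ term is negative (since $\tfrac{\pi y}{\alpha}<1$ while $u_1$ is not small), so the direct form cannot be used on $[\tfrac{\sqrt3}{2},y_0]$ uniformly in $\alpha$. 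The correct dichotomy, as in Lemma \ref{lem5.2}, is governed by $y/\alpha$ rather than by $y$ alone: use the direct expansion when $y/\alpha\ge1$ (then $u_1\le e^{-\pi/2}$ and $\tfrac{\pi y}{\alpha}-1\ge\pi-1>0$), and the Poisson-dual form when $\alpha/y\ge1$ (then $e^{-\pi\alpha/y}\le e^{-\pi}<\tfrac1{2\sqrt2}$).

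A secondary inaccuracy: there is no cancellation between $T_1$ and $T_0$. The $|n|=1$ slice carries the factor $e^{-\pi\alpha y}$ (or $e^{-\pi\alpha(y+\frac1{4y})}$ after Poisson summation in the $Y$-variable), which is of a completely different exponential order from the residual $e^{-\pi y/(2\alpha)}(\tfrac{\pi y}{\alpha}-1)$ that you correctly extract from $T_0$; in the paper $\mathcal P_2$ and $\mathcal P_3$ are simply shown to be positive on their own, by the same arguments used for $M_3,M_4$ in Lemmas \ref{lemma4.19} and \ref{4.3lemma3}, and then discarded from the lower bound. Once the two regimes are swapped back and the spurious $T_0$--$T_1$ cancellation is removed, your tail estimate for $\mathcal R$ goes through exactly as in the paper.
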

Recalling Lemma \ref{3Lemma2}, for $\alpha\geq2,\:y\geq\frac{\sqrt{3}}{2}$, one has
\begin{equation}\aligned\label{lem5.1denote}
 \mathcal{K}(\alpha;\frac{1}{2}+iy)-2\sqrt{2} \mathcal{K}(2\alpha;\frac{1}{2}+iy)
 =\frac{1}{\pi}{2}^{-\frac{5}{2}}\alpha^{-\frac{5}{2}}y^{\frac{1}{2}}\big(\mathcal{P}(\alpha;y)+\tilde{\mathcal{E}}(\alpha;y)\big),
  \endaligned\end{equation}
  where
  \begin{equation}\aligned\nonumber
 \mathcal{P}(\alpha;y):=&2\sqrt{2} \alpha\sum_{|n|\leq1} e^{-\pi\alpha y n^2}\vartheta(\frac{y}{\alpha};\frac{n}{2})
-2\sqrt{2} \alpha \sum_{|n|\leq1} e^{-2\pi\alpha  y n^2}\vartheta(\frac{y}{2 \alpha};\frac{n}{2})\\
&+4\sqrt{2}\pi \alpha^2y\sum_{|n|\leq1} n^2e^{-\pi\alpha y n^2}\vartheta(\frac{y}{\alpha};\frac{n}{2})
-8\sqrt{2} \pi  \alpha^2y\sum_{|n|\leq1} n^2e^{-2\pi\alpha  y n^2}\vartheta(\frac{y}{2 \alpha};\frac{n}{2})\\
&+4\sqrt{2}y\sum_{|n|\leq1} e^{-\pi\alpha y n^2}\vartheta_X(\frac{y}{\alpha};\frac{n}{2})
-2\sqrt{2} y\sum_{|n|\leq1} e^{-2\pi\alpha y n^2}\vartheta_X(\frac{y}{2 \alpha};\frac{n}{2}),
  \endaligned\end{equation}
  and
 \begin{equation}\aligned\nonumber
\tilde{\mathcal{E}}(\alpha;y):=&2\sqrt{2} \alpha\sum_{|n|\geq2} e^{-\pi\alpha y n^2}\vartheta(\frac{y}{\alpha};\frac{n}{2})
-2\sqrt{2} \alpha \sum_{|n|\geq2} e^{-2\pi\alpha  y n^2}\vartheta(\frac{y}{2 \alpha};\frac{n}{2})\\
&+4\sqrt{2}\pi \alpha^2y\sum_{|n|\geq2} n^2e^{-\pi\alpha y n^2}\vartheta(\frac{y}{\alpha};\frac{n}{2})
-8\sqrt{2} \pi  \alpha^2y\sum_{|n|\geq2} n^2e^{-2\pi\alpha  y n^2}\vartheta(\frac{y}{2 \alpha};\frac{n}{2})\\
&+4\sqrt{2}y\sum_{|n|\geq2} e^{-\pi\alpha y n^2}\vartheta_X(\frac{y}{\alpha};\frac{n}{2})
-2\sqrt{2} y\sum_{|n|\geq2} e^{-2\pi\alpha y n^2}\vartheta_X(\frac{y}{2 \alpha};\frac{n}{2}).
  \endaligned\end{equation}

  To prove Lemma \ref{lem5.1}, we will seek an appropriate lower bound for $\mathcal{P}(\alpha;y)$ and an upper bound for $\tilde{\mathcal{E}}(\alpha;y)$. To state the proof clearly, we decompose $\mathcal{P}(\alpha;y)$ into several parts. We denote that
  \begin{equation}\aligned\nonumber
 \mathcal{P}_{1}(\alpha;y):=&2\sqrt{2} \alpha\vartheta(\frac{y}{\alpha};0)
-2\sqrt{2} \alpha \vartheta(\frac{y}{2 \alpha};0)
+4\sqrt{2}y\vartheta_X(\frac{y}{\alpha};0)
-2\sqrt{2} y\vartheta_X(\frac{y}{2 \alpha};0),\\
 \mathcal{P}_{2}(\alpha;y):=&4\sqrt{2} \alpha(1+2\pi\alpha y)  e^{-\pi\alpha y }\vartheta(\frac{y}{\alpha};\frac{1}{2})
 -4\sqrt{2} \alpha  e^{-2\pi\alpha  y }(1+4\pi\alpha y)\vartheta(\frac{y}{2 \alpha};\frac{1}{2}),\\
\mathcal{P}_{3}(\alpha;y):=&8\sqrt{2}y e^{-\pi\alpha y }\vartheta_X(\frac{y}{\alpha};\frac{1}{2})
-4\sqrt{2} y e^{-2\pi\alpha y }\vartheta_X(\frac{y}{2 \alpha};\frac{1}{2}).
  \endaligned\end{equation}
   Then
   \begin{equation}\aligned\label{lem5.1P}
 \mathcal{P}(\alpha;y)=\mathcal{P}_{1}(\alpha;y)+\mathcal{P}_{2}(\alpha;y)+\mathcal{P}_{3}(\alpha;y).
  \endaligned\end{equation}

Lemma \ref{lem5.1} is implied by \eqref{lem5.1denote} and the following lemma.

  \begin{lemma}\label{lem5.2} Assume that $ \alpha\geq 2$ and $y\geq\frac{\sqrt{3}}{2}$.
   \begin{itemize}
  \item [(1)]  If $\frac{y}{\alpha}\geq1,$ then
   \begin{itemize}
  \item [(1)]  $\mathcal{P}(\alpha;y)\geq 4\sqrt{2}\alpha e^{-\pi\frac{y}{2\alpha}}\big(\frac{\pi y}{\alpha}-1 -( \frac{2\pi y}{\alpha}-1) e^{-\pi \frac{y}{2\alpha}}\big).$
      \item [(2)] $|\tilde{\mathcal{E}}(\alpha;y)|\leq(2\pi\alpha+112\pi {\alpha}^2 y+8 \sqrt{2} y)e^{-4\pi\alpha y}.$
      \end{itemize}
    \item [(2)] If $\frac{y}{\alpha}\in(0,1)$, then
     \begin{itemize}
  \item [(1)]  $ \mathcal{P}(\alpha;y)\geq 8\sqrt{2}\pi{\alpha}^{\frac{5}{2}}y^{-\frac{3}{2}}(e^{-\frac{\pi\alpha}{y}}-2\sqrt{2}e^{-\frac{2\pi\alpha}{y}}).$
    \item [(2)] $|\tilde{\mathcal{E}}(\alpha;y)|\leq 4\pi {\alpha}^{\frac{3}{2}}y^{-\frac{1}{2}}(1+16\alpha y)e^{-4\pi\alpha y}.$
      \end{itemize}
     \item [(3)] $\mathcal{P}(\alpha;y)+\tilde{\mathcal{E}}(\alpha;y)>0.$
      \end{itemize}
   \end{lemma}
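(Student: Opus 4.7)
The plan is to prove items (1), (2), (3) in succession by isolating $\mathcal{P}(\alpha;y)$ as the principal part and showing that the tail $\tilde{\mathcal{E}}(\alpha;y)$ is negligible with exponential rate $e^{-4\pi\alpha y}$. The dichotomy between $y/\alpha\ge 1$ and $y/\alpha<1$ reflects which of the two expansions of $\vartheta$ converges quickly: the direct Fourier series \eqref{TXY} in the first regime, the Poisson-transformed series \eqref{Poisson} combined with the derivative formula \eqref{lem4.22eq1} in the second. In both regimes the decomposition $\mathcal{P}=\mathcal{P}_1+\mathcal{P}_2+\mathcal{P}_3$ localizes the main contribution inside $\mathcal{P}_1$, since $\mathcal{P}_2,\mathcal{P}_3$ carry the much smaller prefactor $e^{-\pi\alpha y}$ and are absorbed into the leading bracket.

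For (1)(1), with $y/\alpha\ge 1$, I expand each theta in $\mathcal{P}_1$ by \eqref{TXY} and retain the $n=0,\pm 1$ modes. The arithmetic of the $n=\pm 1$ cross-terms produces $4\sqrt{2}(\pi y-\alpha)e^{-\pi y/(2\alpha)}-4\sqrt{2}(2\pi y-\alpha)e^{-\pi y/\alpha}$, which factors as the stated bound; the remaining higher harmonics together with the non-negative contributions of $\mathcal{P}_2,\mathcal{P}_3$ only improve the inequality. For (1)(2), Lemma \ref{lem4.23} provides $|\vartheta(X;n/2)|\le\vartheta(X;1)$ and $|\vartheta_X(X;n/2)|\le 2|\vartheta_X(X;1)|$, which reduces each of the six sums defining $\tilde{\mathcal{E}}$ to a geometric-type tail whose leading factor is $e^{-4\pi\alpha y}$ and whose polynomial coefficient $(2\pi\alpha+112\pi\alpha^2 y+8\sqrt{2}y)$ emerges from collecting the six contributions.

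For (2)(1), with $y/\alpha<1$, I apply \eqref{Poisson} and \eqref{lem4.22eq1} to each of the four theta factors in $\mathcal{P}_1$. The key observation is a cascade of cancellations: the coefficients of $\alpha^{3/2}y^{-1/2}$, of $\alpha^{3/2}y^{-1/2}e^{-\pi\alpha/y}$, and of $\alpha^{3/2}y^{-1/2}e^{-2\pi\alpha/y}$ all vanish identically (at the leading order one computes $2\sqrt{2}(1-\sqrt{2})-2\sqrt{2}+4=0$, and the other two follow analogously). The first surviving contribution arises from the $\pi X^{-5/2}$-term in \eqref{lem4.22eq1} applied to $\vartheta_X(y/\alpha;0)$ and $\vartheta_X(y/(2\alpha);0)$, giving exactly $8\sqrt{2}\pi\alpha^{5/2}y^{-3/2}(e^{-\pi\alpha/y}-2\sqrt{2}e^{-2\pi\alpha/y})$, as claimed. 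For (2)(2), applying \eqref{Poisson} and \eqref{lem4.22eq1} entry by entry yields prefactors $O((\alpha/y)^{1/2})$ for $\vartheta$-terms and $O((\alpha/y)^{5/2})$ for $\vartheta_X$-terms, which combined with the frequency factor $e^{-\pi\alpha y n^2}\le e^{-4\pi\alpha y}$ and geometric summation produce the stated bound.

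For item (3), in each regime the ratio of the tail bound to the principal lower bound has exponential rate $e^{-y(4\pi\alpha-\pi/(2\alpha))}$ in case (1) and $e^{-\pi\alpha(4y-1/y)}$ in case (2); both exponents are comfortably positive for $\alpha\ge 2$ and $y\ge\sqrt{3}/2$ (the first via $8\alpha^2>1$, the second via $4y^2>1$), so the polynomial prefactors cannot reverse the inequality. I expect the main obstacle to be the bookkeeping in Case (2), especially verifying the triple cancellation at order $\alpha^{3/2}y^{-1/2}$ in $\mathcal{P}_1$; a secondary check is the positivity of the bracket in (1)(1) on $\{y/\alpha\ge 1\}$, which reduces to the one-variable inequality $(2t-1)\ge(4t-1)e^{-t}$ for $t=\pi y/(2\alpha)\ge\pi/2$, straightforward by monotonicity since the derivative $2+(4t-5)e^{-t}$ is positive on $t\ge\pi/2$ and the value at $t=\pi/2$ is $\pi-1-(2\pi-1)e^{-\pi/2}>0$.
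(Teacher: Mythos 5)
Your proposal is correct and follows essentially the same route as the paper: expand $\mathcal{P}_1$ via the Fourier series \eqref{TXY} when $\frac{y}{\alpha}\geq 1$ and via Poisson summation \eqref{Poisson} with \eqref{lem4.22eq1} when $\frac{y}{\alpha}<1$ (where the $\alpha^{\frac{3}{2}}y^{-\frac{1}{2}}$ order cancels exactly as you compute), discard the nonnegative $\mathcal{P}_2,\mathcal{P}_3$ and higher harmonics, and control $\tilde{\mathcal{E}}$ through Lemma \ref{lem4.23}. Your explicit verification of the bracket positivity via $(2t-1)\geq(4t-1)e^{-t}$ for $t\geq\frac{\pi}{2}$ is a detail the paper leaves implicit but is needed for item (3).
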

  \begin{proof}
  (1). Combining \eqref{TXY} and $\frac{y}{\alpha}\geq1$, one has
 \begin{equation}\aligned\label{lem5.2eq1}
 \mathcal{P}_{1}(\alpha;y)
 =&4\sqrt{2}\alpha\Big(e^{-\pi\frac{y}{2\alpha}}\big(\frac{\pi y}{\alpha}-1 -( \frac{2\pi y}{\alpha}-1) e^{-\pi \frac{y}{2\alpha}}\big)\\
 &\;\;+ \sum_{n=2}^{\infty}e^{-\pi{n}^2\frac{y}{2\alpha}}\big( \frac{\pi y}{\alpha}n^2-1- ( \frac{2\pi y}{\alpha}n^2-1) e^{-\pi{n}^2\frac{y}{2\alpha}}\big)\Big)\\
 \geq& 4\sqrt{2}\alpha e^{-\pi\frac{y}{2\alpha}}\big(\frac{\pi y}{\alpha}-1 -( \frac{2\pi y}{\alpha}-1) e^{-\pi \frac{y}{2\alpha}}\big)>0.\\
  \endaligned\end{equation}
  For $\mathcal{P}_{2}(\alpha;y)$ and $\mathcal{P}_{3}(\alpha;y)$,  similar to the analysis of positiveness of $M_{3}(\alpha;y)$ and $M_{4}(\alpha;y)$ in Lemma \ref{lemma4.19},
 one can get $\mathcal{P}_{2}(\alpha;y)>0$ and $\mathcal{P}_{3}(\alpha;y)>0$. Then, by \eqref{lem5.1P} and \eqref{lem5.2eq1}, we obtain the lower bound estimate of $\mathcal{P}(\alpha;y)$. Similar to the proof of item (2) of Lemma \ref{4.2lemma2}, using \eqref{TXY}, \eqref{Poisson} and Lemma \ref{lem4.23}, we can get an upper bound estimate of $\tilde{\mathcal{E}}(\alpha;y)$.

  (2). As $\frac{\alpha}{y}\geq1$, for $\mathcal{P}_{1}(\alpha;y)$, by \eqref{Poisson}, one has
  \begin{equation}\aligned\label{lem5.3eq1}
 \mathcal{P}_{1}(\alpha;y)=&8\sqrt{2}\pi{\alpha}^{\frac{5}{2}}y^{-\frac{3}{2}}\big(\sum_{n=1}^{\infty}n^2 e^{-\pi n^2 \frac{\alpha}{y}}-2\sqrt{2}\sum_{n=1}^{\infty}n^2 e^{-2\pi n^2 \frac{\alpha}{y}}\big)\\
 =&8\sqrt{2}\pi{\alpha}^{\frac{5}{2}}y^{-\frac{3}{2}}\big( e^{-\pi  \frac{\alpha}{y}}-2\sqrt{2}e^{-2\pi  \frac{\alpha}{y}}+ \sum_{n=2}^{\infty}n^2 e^{-\pi n^2 \frac{\alpha}{y}}(1-2\sqrt{2} e^{-\pi n^2 \frac{\alpha}{y}} )\big)\\
  \geq& 8\sqrt{2}\pi{\alpha}^{\frac{5}{2}}y^{-\frac{3}{2}}(e^{-\frac{\pi\alpha}{y}}-2\sqrt{2}e^{-\frac{2\pi\alpha}{y}}).
  \endaligned\end{equation}
 Similar to the analysis of the positiveness of $M_{3}(\alpha;y)$ and $M_{4}(\alpha;y)$ in Lemma \ref{4.3lemma3}, one gets $\mathcal{P}_{2}(\alpha;y)>0$ and $\mathcal{P}_{3}(\alpha;y)>0.$ Thus, by \eqref{lem5.1P}, \eqref{lem5.3eq1} and the positiveness of $\mathcal{P}_{2}(\alpha;y)$ and $\mathcal{P}_{3}(\alpha;y)$, we get the estimate for $\mathcal{P}(\alpha;y)$. Similar to the proof of item (2) of Lemma \ref{4.2lemma2}, we get the upper bound estimate of $\mathcal{E}(\alpha;y)$. Items (1) and (2) yield item (3).
   \end{proof}

With the previous preparation, we are ready to prove our main theorem.

Case 1: By Theorems \ref{3Thm1} and \ref{4Thm1}, we obtain that, up to the action by the modular group,
\begin{equation}\aligned\label{Lemma511}
\underset{z\in  \mathbb{H}}{\arg \min}\;\big(\mathcal{K}(\alpha;z)-b\mathcal{K}(2\alpha;z)\big)=e^{i\frac{\pi}{3}}
\;\;\hbox{for}\;\;\alpha\geq2\;\hbox{and}\;b\leq2.
\endaligned\end{equation}
 Therefore, we introduce and define that
$$b_{c_{1}}:=\{\max\; b\;\big|\;\underset{z\in  \Gamma_c}{\arg \min}\;\big(\mathcal{K}(\alpha;z)-b\mathcal{K}(2\alpha;z)\big)=e^{i\frac{\pi}{3}}
\;\;\hbox{for}\;\;\alpha\geq2
\}.$$
 Then by Proposition \ref{Prop51} and \eqref{Lemma511}, $b_{c_1}\in(2,2\sqrt2)$. The threshold
 $b_{c_1}$ varies with $\alpha$. For example, $b_{c_{1}}\approx2.8061$ when $\alpha=2$,\; and $b_{c_{1}}\approx2.8242$ when $\alpha=2.5.$

Case 2: $b\in (b_{c_{1}},b_{c_2}).$ When $b<b_{c_2}=2\sqrt{2}$, by Theorem \ref{3Thm1} and \eqref{5eq1}, the minimizer always exists.
Furthermore, by Theorem \ref{3Thm1} and the definition of $b_{c_{1}}$ in \eqref{define}, the minimizer is $\frac{1}{2}+iy_b$ $(y_b>\frac{\sqrt{3}}{2})$, corresponding the skinny-rhombic lattice. A numerical simulation can be found in Figure \ref{excel}. Furthermore, the monotonicity result in Luo-Wei-Zou \cite{LW2021} implies that when $b$ approaches $2\sqrt{2}$, $y_{b}$ approaches $+\infty$.

Case 3: $b\geq b_{c_2}=2\sqrt{2}.$ It follows by Proposition \ref{Prop51}.

\begin{figure}
\centering
 \includegraphics[scale=0.42]{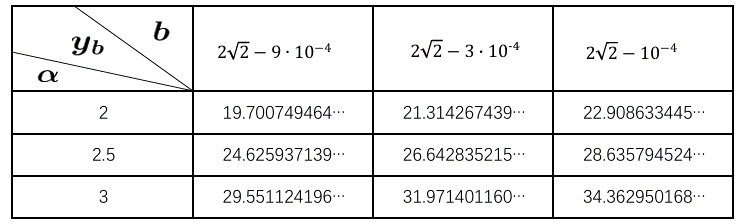}
 \caption{The minimizers $z=\frac{1}{2}+i y_{b}$ corresponding to skinny-rhombic lattices.}
 \label{excel}
\end{figure}

\vskip0.1in

{\bf Acknowledgements.}
The research of S. Luo is partially supported by the National Natural Science Foundation of China (NSFC) under Grant Nos. 12261045 and 12001253, and by the Jiangxi Jieqing Fund under Grant No. 20242BAB23001.


\end{document}